\documentclass[a4paper]{amsart}
\usepackage{epsfig}
\usepackage{amssymb}
\usepackage{amsmath}
\usepackage{latexsym}

\usepackage{latexsym,graphicx, color}
\newtheorem{thm}{Theorem}[section] \newtheorem{lem}[thm]{Lemma}
\newtheorem{cor}[thm]{Corollary} \newtheorem{conj}[thm]{Conjecture}
\newtheorem{prop}[thm]{Proposition} \theoremstyle{definition}
\newtheorem{defn}[thm]{Definition}

\newtheorem{conv}[thm]{Convention} \newtheorem{rem}[thm]{Remark}

\newtheorem{cond}[thm]{Condition}


\def\strutdepth{\dp\strutbox}
\def \ss{\strut\vadjust{\kern-\strutdepth \sss}}
\def \sss{\vtop to \strutdepth{
\baselineskip\strutdepth\vss\llap{$\diamondsuit\;\;$}\null}}

\def\strutdepth{\dp\strutbox}
\def \sst{\strut\vadjust{\kern-\strutdepth \ssss}}
\def \ssss{\vtop to \strutdepth{
\baselineskip\strutdepth\vss\llap{$\spadesuit\;\;$}\null}}

\begin{document}

\date{}

\author[I.~Kapovich]{Ilya Kapovich}

\address{\tt Department of Mathematics, University of Illinois at
  Urbana-Champaign, 1409 West Green Street, Urbana, IL 61801, USA
  \newline http://www.math.uiuc.edu/\~{}kapovich/} \email{\tt
  kapovich@math.uiuc.edu}

\author[R.~Weidmann]{Richard Weidmann}

\address{\tt Mathematisches Seminar, Christian-Albrechts-Universit\"at zu Kiel, Ludewig-Meyn Str. 4, 24098 Kiel, Germany}
\email{\tt weidmann@math.uni-kiel.de}

\title{Nielsen equivalence in small cancellation groups}

\begin{abstract} Let $G$ be a group given by the presentation
  \[\langle a_1,\ldots,a_k,b_1,\ldots b_k\,|\,a_i=u_i(\bar b),
  b_i=v_i(\bar a)\hbox{ for }1\le i\le k\rangle,\] where $k\ge 2$ and
  where the $u_i\in F(b_1,\dots, b_k)$ and
  $w_i\in F(a_1,\dots, a_k)$ are random words. Generically such a group is a small
  cancellation group and it is clear  that $(a_1,\ldots,a_k)$ and
  $(b_1,\ldots,b_k)$ are generating $n$-tuples for $G$. We prove that for
  generic choices of $u_1,\dots, u_k$ and $v_1,\dots, v_k$ the
  ``once-stabilized'' tuples $(a_1,\ldots, a_k,1)$ and
  $(b_1,\ldots,b_k,1)$ are not Nielsen equivalent in $G$. This
  provides a counter-example for a Wiegold-type conjecture in the
  setting of word-hyperbolic groups. We conjecture that in the above
  construction at least $k$ stabilizations are needed to make the
  tuples  $(a_1,\ldots, a_k)$ and $(b_1,\dots,b_k)$ Nielsen equivalent.
\end{abstract}

\thanks{The first author was supported by the NSF
  grant DMS-0904200}
\subjclass[2000]{Primary 20F65, Secondary 20F67, 57M07}
\keywords{Nielsen equivalence, word-hyperbolic groups, genericity}  

\maketitle

\section{Introduction}

For a group $G$ and an integer $k\ge 1$, Nielsen equivalence is an
equivalence relation on the set of ordered $k$-tuples of elements of
$G$. Let $\mathcal T=(g_1,\ldots,g_k)\in G^k$ and $\mathcal
T'=(g'_1,\ldots,g'_k)\in G^k$ be two $k$-tuples. Then $\mathcal T$ and $\mathcal T'$ are {\em elementary equivalent} (write $\mathcal T\sim_e\mathcal T'$) if one of the following holds:

\begin{enumerate}
\item There exists some $\sigma\in S_k$ such that $g_i'=g_{\sigma(i)}$
  for $1\le i\le k$.
\item $g_i'=g_i^{-1}$ for some $i=1,\ldots,k$ and $g_j'=g_j$ for
  $j\neq i$.
\item $g_i'=g_ig_j$ for some $i\neq j$ and $g_t'=g_t$ for $t\neq i$.
\end{enumerate}

The above transformations are called a {\em Nielsen transformations}
or {\em Nielsen moves}. Two tuples $\mathcal T$ and $\mathcal T'$ are
called {\em Nielsen equivalent} or simply {\em equivalent} if there
exists some finite sequence $\mathcal T_0,\mathcal T_1,\ldots,
\mathcal T_l$ such that 
\[
\mathcal T=\mathcal T_0\sim_e\mathcal T_1\sim_e\ldots\sim_e\mathcal
T_l=\mathcal T'.
\]

Jacob Nielsen introduced this equivalence relation to study subgroups of
free groups. Among other things, he showed that in the free group
$F_n=F(x_1,\ldots,x_n)$  any generating $k$-tuple is Nielsen
equivalent to the tuple $(x_1,\ldots,x_n,1,\ldots,1)$ (so that,
necessarily $k\ge n$). In particular any two generating $k$-tuples of $F_n$ are
Nielsen equivalent. This result, together with the fact that any
Nielsen move on a basis of $F_n$ induces an automorphism of $F_n$, implies the following alternative characterization of Nielsen equivalence:

\medskip Two $k$-tuples $\mathcal T=(g_1,\ldots,g_k)\in G^k$ and $\mathcal T'=(g'_1,\ldots,g'_k)\in G^k$ are Nielsen equivalent if and only if there exists a homomorphism $\phi:F_k\to G$ and an automorphism $\alpha$ of $F_k$ such that the following hold:

\begin{enumerate}
\item $g_i=\phi(x_i)$ for $1\le i\le k$.
\item $g_i'=\phi\circ\alpha(x_i)$ for $1\le i\le k$.
\end{enumerate}

Let $F_k=F(x_1,\dots, x_k)$ be a free group of rank $k$ with a fixed
free basis $(x_1,\dots, x_k)$. There is a natural identification
between the set $Hom(F_k,G)$ of homomorphisms from $F_k$ to $G$ and
the set $G^k$ of $k$-tuples of elements of $G$. There is also a
natural left action of $Aut(F_k)$ on $Hom(F_k,G)$ by
pre-composition. In view of the above remark, two $k$-tuples of
elements of $G$ are Nielsen equivalent if and only if the
corresponding elements of $Hom(F_k,G)$ lie in the same
$Aut(F_k)$-orbit.

In general it is difficult to decide if two $k$-tuples are
Nielsen equivalent in a given group.

If two tuples $\mathcal T=(g_1,\ldots,g_k)\in G^k$ and $\mathcal
T'=(g'_1,\ldots,g'_k)\in G^k$ are Nielsen equivalent then they
generate the same subgroup of $G$, that is $\langle \mathcal
T\rangle=\langle \mathcal T'\rangle\le G$. Thus if two tuples generate
different subgroups of $G$, the tuples are not Nielsen equivalent.
However, this observation does not help in distinguishing
Nielsen equivalence classes of tuples generating the same subgroup, in
particular those tuples that generate the entire group $G$ (\emph{generating}
tuples). Under the identification of the set of $k$-tuples in $G$ with
$Hom(F_k,G)$ discussed above, the set of generating $k$-tuples of $G$
corresponds to the set $Epi(F_k,G)$ of epimorphisms from $F_k$ to $G$.

The only exception is the case $k=2$. A basic fact due to Nielsen
shows that if 2-tuples $(g_1,g_2)$ and $(h_1,h_2)$ are
Nielsen equivalent in $G$ then $[g_1,g_2]$ is conjugate to
$[h_1,h_2]^{\pm 1}$ in $G$. No such criteria exist for $k\ge 3$ and
there are few known results distinguishing Nielsen-equivalence
classes of generating $k$-tuples for $k\ge 3$.

Note that even in the algorithmically nice setting of torsion-free
word-hyperbolic groups the problem of deciding if two tuples are
Nielsen equivalent is algorithmically undecidable.
Indeed the subgroup membership problem is a special case of this problem since
two tuples $(g_1,\ldots,g_n,h)$ and $(g_1,\ldots,g_n,1)$ are Nielsen
equivalent if and only if $h\in\langle g_1,\ldots, g_n\rangle$. This implies in
particular that Nielsen equivalence  is not decidable for finitely
presented torsion-free small cancellation groups since they do not have
decidable subgroup membership problem as shown by Rips  \cite{R}.

Understanding Nielsen equivalence of generating $k$-tuples for $k\ge 3$
is particularly difficult, and the problem becomes even harder if
$k>rank(G)$, where $rank(G)$ is the smallest size of a generating set
of $G$.

Of particular interest here is the so-called \emph{Wiegold conjecture} about
generating tuples of finite simple groups. A generating
$k$-tuple for $G$ is \emph{redundant} if it contains a proper subtuple that
still generates $G$. We say that a generating tuple is \emph{weakly
  redundant} if it is Nielsen equivalent to a redundant tuple. Note
that a redundant tuple $(g_1,\dots, g_k)$ is always Nielsen equivalent
to a $k$-tuple of the form $(h_1,\dots, h_{k-1},1)$. Thus a generating
tuple is weakly redundant if and only if it is Nielsen equivalent to a
tuple containing a trivial entry. It is well-known, as a consequence
of the classification of finite simple groups, that every finite
simple group $G$ is two-generated, so
that $rank(G)\le 2$. The Wiegold conjecture says that if $G$ is a
finite simple group and $k\ge 3$ then any two generating $k$-tuples of
$G$ are Nielsen equivalent; in other words the conjecture says that the action of $Aut(F_k)$ on
$Epi(F_k,G)$ is transitive in this case. Since $G$ is two-generated
and has a generating $k$-tuple of the form $(a,b,1,\dots, 1)$, this
implies that any generating $k$-tuple of $G$ is redundant. The Wiegold
conjecture is closely related to the so-called ``product replacement
algorithm'' for finite groups and there is substantial experimental
evidence and a number of partial theoretical results in favor of the
validity of the Wiegold conjecture. We refer the reader to ~\cite{Pak,LGM,LuPa}
for a more extensive discussion of this topic.

One obvious way of producing redundant tuples is via the so-called
``stabilization'' moves. A stabilization move on a $k$-tuple
$(g_1,\dots, g_k)$ gives a $(k+1)$-tuple $(g_1,\dots, g_k,1)$. It is
easy to see that for any generating $k$-tuples $(g_1,\dots, g_k)$ and
$(h_1,\dots, h_k)$ of a group $G$, applying $k$ stabilization moves to
each of them produces Nielsen equivalent $2k$-tuples $(g_1,\dots,
g_k,1,\dots, 1)$ and $(h_1,\dots, h_k,1,\dots, 1)$. The Wiegold conjecture
implies that for any two generating pairs $(a,b)$ and $(a_1,b_1)$ of a
finite simple group $G$, the once-stabilized tuples $(a,b,1)$ and
$(a_1,b_1,1)$ are Nielsen equivalent.

\medskip Let us mention here the (few) known results on distinguishing
Nielsen equivalence of generating $k$-tuples for infinite groups.
Apart from the special and much easier case of $k=2$, these can
be mostly separated into two distinct approaches.

\medskip The first one is algebraic ($K$-theoretic). The earliest work is due to Noskov \cite{No} who showed that there exist non-minimal generating tuples that are not Nielsen equivalent to a tuple containing the trivial element and thereby giving a negative answer to a question of Waldhausen. These results where later generalized by Evans \cite{E1}.

\smallskip Lustig and Moriah \cite{LM1}, \cite{LM2}, \cite{LM3} used algebraic methods to distinguish Nielsen equivalence classes of Fuchsian groups and other groups with appropriate presentations. This enabled them to distinguish isotopy classes of vertical Heegaard splittings of Seifert manifolds.

\smallskip Recently Evans \cite{E2,E3}, for any given number $N$, produced large generating tuples of metabelian groups that do not become Nielsen equivalent after adding the trivial element to the tuples $N$ times, making those the first examples of this type in the literature even for the case $N=1$. The generating tuples however are much bigger than the rank of the group.

\medskip The second approach is combinatorial-geometric and is closer in spirit to Nielsen's original work, it relies mostly on  using cancellation methods. First in line is Grushko's theorem \cite{G} which states that any generating tuple of a free product is Nielsen equivalent to a tuple of elements that lie in the union of the factors. Together with recent work of the second author \cite{W} this implies that Nielsen equivalence of irreducible generating tuples in a free product is decidable iff it is decidable in the factors.

\smallskip Zieschang \cite{Z} proves that any minimal generating tuple of a surface group is Nielsen equivalent to the standard generating tuple and proves a similar result for Fuchsian groups that lead to the solution of the rank problem \cite{PRZ}. Nielsen equivalence in Fuchsian groups has been studied by many authors. Recently Louder \cite{L} has generalized Zieschangs result to arbitrary generating tuples.

\smallskip The finiteness of Nielsen equivalence classes of $k$-tuples  for torsion-free locally quasiconvex-hyperbolic groups has been established by the authors \cite{KW} generalizing a result of Delzant \cite{D} who studied 2-generated groups. The first author and Schupp \cite{KS} have recently established uniqueness of the Nielsen equivalence class of minimal generating tuples for a class of groups closely related to the one studied in the present article.

\medskip The main result of this paper is the following theorem, which
implies in particular that there exist 2-generated torsion-free
word-hyperbolic groups that have generating pairs $(a_1,a_2)$ and $(b_1,b_2)$ such that $(a_1,a_2,1)$ and $(b_1,b_2,1)$ are not Nielsen equivalent. 
See Section~\ref{sect:gen} for precise definitions and notations related to
genericity. If $k\ge 2, m\ge 2$ are integers, for the free group
$F(a_1,\dots, a_k)$ we denote by $\mathcal C_{m,A}$ the set of all
$m$-tuples $(v_1,\dots, v_m)$ of cyclically reduced words in
$F(a_1,\dots, a_k)$ such that $|v_1|=\dots =|v_m|$. Similarly, denote
by  $\mathcal C_{k,B}$ the set of all $k$-tuples  $(u_1,\dots, u_k)$ of cyclically reduced words in
$F(b_1,\dots, b_k)$ such that $|u_1|=\dots =|u_k|$.

\begin{thm}\label{main} Let $k\ge 2, m\ge 2$ be arbitrary and let $G$ be a group given by the presentation
\[\langle a_1,\ldots,a_k,b_1,\ldots b_m\,|\,a_i=u_i(\bar b),
b_j=v_j(\bar a)\hbox{ for }1\le i\le k, 1\le j\le m \rangle. \tag{$\dag$}\]

There exist generic subsets $\, \mathcal U_k\subseteq \mathcal
C_{k,B}$ and $\mathcal V_m\subseteq \mathcal C_{m,A}$ such
that for any $(u_1,\dots, u_k)\in \mathcal U_k$ and any $(v_1,\dots,
v_m)\in \mathcal V_m$ with $|u_1|=|v_1|$, the group $G$ given by
presentation $(\dag)$ above has the following property:

For any $g_1,\dots, g_{k-1} \in G$ the generating $(k+1)$-tuple
$(a_1,\ldots, a_k,1)$ of $G$ is not Nielsen equivalent to a tuple
$(b_1,b_2,g_1,\ldots,g_{k-1})$.
\end{thm}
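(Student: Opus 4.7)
The plan is first to reduce the statement to a combinatorial problem in the free group $F_k=F(x_1,\dots,x_k)$, and then block it using small cancellation geometry together with a Stallings-graph argument in the spirit of \cite{KW,KS}. For the reduction, let $\phi\colon F_{k+1}\to G$ be the homomorphism with $\phi(x_i)=a_i$ for $i\le k$ and $\phi(x_{k+1})=1$; then $\phi=\bar\phi\circ\pi$ where $\pi\colon F_{k+1}\to F_k$ kills $x_{k+1}$ and $\bar\phi\colon F_k\to G$ sends $x_i\mapsto a_i$. A direct verification, using that Nielsen moves commute with $\pi$ and that any sequence of Nielsen moves in $F_k$ starting from $(x_1,\dots,x_k,1)$ lifts to a sequence of Nielsen moves in $F_{k+1}$ starting from $(x_1,\dots,x_{k+1})$, gives that the Nielsen class of $(a_1,\dots,a_k,1)$ in $G$ consists exactly of the tuples $(\omega_1(\bar a),\dots,\omega_{k+1}(\bar a))$ as $(\omega_1,\dots,\omega_{k+1})$ ranges over generating $(k+1)$-tuples of $F_k$. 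The theorem therefore reduces to: for generic $(u_i)$ and $(v_j)$, there is no tuple $(\omega_1,\omega_2,z_1,\dots,z_{k-1})\in F_k^{k+1}$ that simultaneously generates $F_k$ and satisfies $\omega_1(\bar a)=b_1$ and $\omega_2(\bar a)=b_2$ in $G$.

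Next, for generic $u_i,v_j$ of common length $n$, the presentation $(\dag)$ is $C'(\lambda)$ small cancellation with $\lambda$ arbitrarily small (a standard random-word estimate). Given any $\omega\in F_k$ with $\omega(\bar a)=b_j$ in $G$, a minimal van Kampen diagram $D$ for $\omega\cdot b_j^{-1}$ has a unique boundary $b_j$-edge, and small cancellation forces the face of $D$ incident to it to be of defining type $b_jv_j^{-1}$ (alternatives, where the boundary $b_j$-edge is identified with a $b_j^{\pm 1}$-letter inside some $u_i$, are ruled out by $C'(\lambda)$ and minimality under the genericity hypothesis). Iteratively applying Greendlinger's lemma, $\omega$ is obtained from $v_j$ by a controlled sequence of modifications by derived relators $a_i^{-1}u_i(v_1(\bar a),\dots,v_m(\bar a))\in F_k$. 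The Stallings graph $\Gamma(\langle\omega_1,\omega_2\rangle)\le F_k$ therefore differs from $\Gamma(\langle v_1,v_2\rangle)$ only by such controlled perturbations, and for generic $v_j$ its bulk has the shape of a wedge of two essentially random cycles of length $\sim n$.

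The principal step, and where the genericity hypothesis does the key work, is to show that no such pair $(\omega_1,\omega_2)$ extends to a generating $(k+1)$-tuple of $F_k$. I would analyze the Stallings foldings of the labeled rose with $k+1$ petals labeled by $\omega_1,\omega_2,z_1,\dots,z_{k-1}$: this rose has rank $k+1$, and in order for its folded image to equal the standard rose of $F_k$ (which is necessary for $(\omega_1,\omega_2,z_1,\dots,z_{k-1})$ to generate $F_k$) a prescribed number of non-closing folds together with exactly one closing fold must occur, and each such fold requires a label coincidence between an $\omega_i$-edge and a $z_j$-edge at a shared vertex. For generic $v_j$, the Whitehead graph of $(\omega_1,\omega_2)$ has no cut vertex and the combinatorial structure of $\Gamma(\langle\omega_1,\omega_2\rangle)$ is too rigid to admit such a pattern of coincidences; a careful counting of folds shows that the total number of available coincidences, over any choice of $z_j$, falls short of what would be needed to reduce the graph to the standard rose. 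Hence $\langle\omega_1,\omega_2,z_1,\dots,z_{k-1}\rangle\ne F_k$ for every choice of $z_j$, giving the desired contradiction. I expect the main obstacle to lie in making this final counting precise under the prescribed genericity of $(u_i),(v_j)$, parallel to the Stallings-graph rigidity analysis of the first author and Schupp \cite{KS}.
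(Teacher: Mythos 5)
Your initial reduction is correct and coincides exactly with the paper's first step: a $(k+1)$-tuple Nielsen equivalent to $(a_1,\dots,a_k,1)$ in $G$ is necessarily of the form $(\omega_1(\bar a),\dots,\omega_{k+1}(\bar a))$ where $(\omega_1,\dots,\omega_{k+1})$ is a generating $(k+1)$-tuple of $F_k=F(A)$, and you must rule out $\omega_1(\bar a)=_G b_1$, $\omega_2(\bar a)=_G b_2$. After that, however, your argument has a genuine gap that I do not see how to close. The central unjustified claim is that $\omega_1,\omega_2$ are ``controlled perturbations'' of $v_1,v_2$, so that $\Gamma(\langle\omega_1,\omega_2\rangle)$ has the shape of a wedge of two essentially random cycles of length $\sim n$. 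Nothing forces this: $\omega_i$ is any element of $F(A)$ equal to $b_i=_G v_i(\bar a)$ in $G$, and such elements can be arbitrarily long relative to $n$ (e.g.\ $\omega_1=v_1\cdot r$ after free reduction, for a conjugate $r$ of some cyclic permutation of a defining relator). Moreover, the constraint that $(\omega_1,\omega_2,z_1,\dots,z_{k-1})$ actually generates $F_k$ is exactly what pushes $\omega_1,\omega_2$ far from the random words $v_1,v_2$; if one could take $\omega_i=v_i$ the problem would disappear (for random $v_i$ no extension generates), which is a strong hint that the $\omega_i$ you are really confronted with are highly non-random. As a result, your later appeals to ``the Whitehead graph of $(\omega_1,\omega_2)$ has no cut vertex'' and the final ``careful counting of folds'' are resting on a structure you have not established, and the counting is not carried out.

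The paper avoids this by introducing an explicit minimality: among all tuples $T=(w_1,\dots,w_{k+1})$ Nielsen equivalent in $F(A)$ to $(a_1,\dots,a_k,1)$ with $w_i(\bar a)=_G b_i$ for $i=1,2$, take one minimizing $|w_1|_A+|w_2|_A$. That minimality is exploited (Subcase B1) and is the mechanism replacing your heuristic about ``controlled perturbations.'' The second missing ingredient is the structural reduction coming from the fold sequence: letting $\Delta$ be the last graph before a copy of $R_A$ appears, its core is a critical subgraph $\Psi$ (rank $k$, at most $k+2$ edges, one fold from a rose) with a segment or lollipop attached, whose label is the \emph{pivotal word} $z$. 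Every word readable in $\Psi$ is ``critical'' and hence highly non-generic by Lemma~\ref{trivial}/\ref{not-so-trivial}, and every element conjugate to $b_1$ or $b_2$ in $G$ readable in $\Delta$ factors as an alternating product of critical words and $z^{\pm1}$. The paper then replaces $z$ by a $G$-geodesic representative $\bar z$ and splits into Case A ($\bar z$ non-exceptional) and Case B ($\bar z$ exceptional), and each case is resolved by a substantial lemma (Lemma~\ref{lem:casea} resp.\ Lemma~\ref{ztobarz} together with the minimality contradiction). None of that dichotomy, nor its supporting cancellation analysis of $w_0\bar z^{\varepsilon_1}w_1\cdots w_{k-1}\bar z^{\varepsilon_k}w_k$ against the relators, appears in your sketch. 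Without the minimality over $T$, without the identification of $\Psi$ and the pivotal word, and without the exceptional/non-exceptional case split, I do not see how your plan produces a contradiction.
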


The theorem immediately implies that for the case $k=m$ the generating
$(k+1)$-tuples $(a_1,\ldots,a_k,1)$ and $(b_1,\ldots,b_k,1)$ are not
Nielsen equivalent in $G$. However, we do believe that much more is true, namely that following holds:

\begin{conj}\label{mainc} Let $k\ge m\ge 2$. There exist generic
  subsets $\, \mathcal U_k\subseteq \mathcal
C_{k,B}$ and $\mathcal V_m\subseteq \mathcal C_{m,A}$ such
that for any $(u_1,\dots, u_k)\in \mathcal{\widetilde U}_k$ and any $(v_1,\dots,
v_m)\in \mathcal{\widetilde V}_m$ with $|u_1|=|v_1|$, the group $G$ given by presentation $(\dag)$ above
has the following property:

For any $1\le t<m$ the generating $(k+t)$-tuple
$(a_1,\ldots, a_k,1,\ldots,1)$ is not equivalent to a $(k+t)$-tuple of type
$(b_1,\ldots,b_{{t+1}},g_1,\ldots,g_{k-1})$.
\end{conj}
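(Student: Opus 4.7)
The plan is to extend the strategy of Theorem~\ref{main} to the setting of $t$ stabilizations. Arguing by contradiction, suppose there exist $g_1,\dots,g_{k-1}\in G$ and an automorphism $\alpha\in Aut(F_{k+t})$ such that, with $\phi:F_{k+t}\to G$ defined by $\phi(x_i)=a_i$ for $i\le k$ and $\phi(x_i)=1$ for $k<i\le k+t$, we have $\phi\circ\alpha(x_j)=b_j$ for $1\le j\le t+1$ and $\phi\circ\alpha(x_{j+t+1})=g_j$ for $1\le j\le k-1$. Write $w_j=\alpha(x_j)\in F_{k+t}$, and let $\bar w_j\in F(\bar a)$ be the freely reduced word obtained from $w_j$ by erasing every occurrence of $x_{k+1}^{\pm 1},\dots,x_{k+t}^{\pm 1}$ and relabeling $x_i\leftrightarrow a_i$. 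The hypothesis then translates to the word equations $\bar w_j(\bar a)=b_j=v_j(\bar a)$ in $G$ for each $j=1,\dots,t+1$, together with the requirement that $(w_1,\dots,w_{k+t})$ be a free basis of $F_{k+t}$.

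The first substantive step is to invoke genericity: for generic $u_i,v_j$ of common length $n$, both the presentation $(\dag)$ and the substituted presentation $\langle\bar a\mid R_i\rangle$ with $R_i:=a_i^{-1}u_i(v_1(\bar a),\dots,v_m(\bar a))$ satisfy a $C'(\lambda)$ small cancellation condition for arbitrarily small $\lambda$. Applying Greendlinger's Lemma to each freely reduced word $\bar w_j\cdot b_j^{-1}$ (trivial in $G$ but nontrivial in the free group) yields the same rigid structural dichotomy as in Theorem~\ref{main}, now applied to each of the $t+1$ words: either $\bar w_j=v_j(\bar a)$ freely in $F(\bar a)$, or $\bar w_j$ has a controlled ``splicing'' form dictated by long pieces of the iterated relators $R_i$. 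Consequently, the subgroup $H=\langle\bar w_1,\dots,\bar w_{t+1}\rangle\le F_k$ has its Stallings graph determined, modulo the relator-splicings, by the $v_j$; a generic-choice argument then guarantees that $H$ has rank $t+1\le m\le k$, is of infinite index in $F_k$, and carries no coincidental coset structure inside $F_k$.

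The final step is to derive the contradiction via the basis condition. The basis $(w_1,\dots,w_{k+t})$ of $F_{k+t}$ descends under the quotient $F_{k+t}\to F_k$ that kills $x_{k+1},\dots,x_{k+t}$ to a generating $(k+t)$-tuple $(\bar w_1,\dots,\bar w_{k+t})$ of $F_k$, so $H$ together with $\bar w_{t+2},\dots,\bar w_{k+t}$ must generate $F_k$. A Stallings-graph analysis, combined with the requirement that the lifts $(w_1,\dots,w_{k+t})$ be free-independent in $F_{k+t}$ (which is strictly stronger than having invertible image in $GL_{k+t}(\mathbb Z)$), should show that for generic parameters no such extension of $H$ by $k-1$ elements can succeed, producing the desired contradiction; the generic sets $\mathcal U_k,\mathcal V_m$ are defined precisely to exclude the exceptional $u_i,v_j$ for which this extension would be possible. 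I expect the main obstacle to be exactly this extension step: unlike the $t=1$ case of Theorem~\ref{main}, here $H$ has rank growing with $t$ and correspondingly richer Stallings-graph structure, and abelianization alone does not obstruct (for generic $v_j$ the relevant $(t+1)\times k$ matrix of exponent sums has coprime maximal minors and thus extends to an element of $GL_{k+t}(\mathbb Z)$). One must therefore produce a genuinely non-abelian obstruction, showing uniformly over $1\le t<m$ that the ``geometric deficit'' between $H$ and $F_k$ cannot be bridged by $k-1$ extra generators whose lifts remain free-independent in $F_{k+t}$; this will likely require refining the Stallings-graph and folding methods of \cite{KW} with a careful accounting of primitive and co-primitive elements compatible with the small cancellation splicings.
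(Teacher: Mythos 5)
The statement you are addressing is Conjecture~\ref{mainc}: the paper does not prove it. The authors prove only the case of a single stabilization (Theorem~\ref{main}, in the precise form of Theorem~\ref{thm:main}), explicitly state the general $t$ case as a conjecture, and even remark that the $t=1$ proof is already ``very complicated''. So there is no proof in the paper to compare yours against, and your text is not a proof either: it is an outline whose decisive steps are asserted rather than established (``should show'', ``will likely require''), which you yourself acknowledge at the point where the real difficulty sits.

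Beyond being incomplete, two steps would fail as written. First, the ``rigid structural dichotomy'' you extract from Greendlinger's lemma for each word $\bar w_j$ (either $\bar w_j=v_j(\bar a)$ freely, or a controlled splicing form) is not what small cancellation theory gives you and is not how even the $t=1$ argument works: the paper does not control the individual words $w_j(\bar a)$ at all, but rather the $A$-graph $\Delta$ occurring one fold before a copy of $R_A$ appears in the folding sequence (Lemma~\ref{coredescription}), together with the critical/exceptional word machinery and a minimality-plus-unfolding argument on $|w_1|_A+|w_2|_A$; a word equal to $b_j$ in $G$ can be an arbitrarily long, repeatedly Dehn-unreduced word, and no finite list of ``splicing forms'' describes it. Second, your genericity step is circular: the sets $\mathcal U_k,\mathcal V_m$ must be chosen before the unknown automorphism $\alpha$, hence before the words $\bar w_j$ and the subgroup $H$, which range over an infinite family depending on $\alpha$; you cannot ``define the generic sets precisely to exclude the exceptional $u_i,v_j$ for which this extension would be possible'' without first proving that this exceptional set is negligible --- and that is exactly the content of the conjecture. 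The genuinely open point, which your last paragraph correctly identifies but does not resolve, is that for $t\ge 2$ the core graph has rank $k+t$, so the complement of the subgraph $\Psi$ consists of several arcs/lollipops rather than the single pivotal path exploited in Case A/B of the paper's proof; handling the interaction of several pivotal words (and the corresponding replacement/shortening argument) is precisely where the known method stops.
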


In particular, if $k=m$, the conjecture would imply that at least $k$
stabilizations are needed in $G$ in order to make the generating
$k$-tuples $(a_1,\dots, a_k)$ and $(b_1,\dots, b_k)$ Nielsen
equivalent.

The conclusion of Theorem~\ref{main} implies that for $G$ as in the
assumption of the theorem with generic $u_i$ and $v_j$, the generating
$k$-tuples $(a_1,\dots, a_k)$ and\\ $(b_1,b_2,\dots, b_m,1,\dots,1)$
(where $(k-m)\ge 0$ trivial entries are present in the second tuple)  are not
Nielsen equivalent in $G$. While the proof of Theorem~\ref{main} is very
complicated, we also give a simple proof (see Theorem~\ref{specialcase}) that in this situation
$(a_1,\dots, a_k)$ is not Nielsen equivalent to a $k$-tuple of the
form $(b_1,,*,\ldots,*)$.

Although we do not prove it in this paper,
for the case $k=m\ge 2$ and $G$ as in Theorem~\ref{main} one can use
the methods of \cite{KS} to show that $G$ has \emph{exactly} two
Nielsen-equivalence classes of generating $k$-tuples, namely
$(a_1,\dots, a_k)$ and $(b_1,\dots, b_k)$.

We are grateful to the referee for useful comments.

\section{Small cancellation theory}

Recall that a set $R$ of cyclically reduced words in $F=F(a_1,\dots,
a_k)$ is \emph{symmetrized} if for every $r\in R$ all cyclic
permutations of $r^{\pm 1}$ belong to $R$. For a symmetrized set
$R\subseteq F(a_1,\dots, a_k)$, a freely reduced word $v\in
F(a_1,\dots, a_k)$ is a \emph{piece with respect to $R$} if there
exist $r_1,r_2\in R$, such that $r_1\ne r_2$ and such that $v$ is an
initial segment of each of $r_1,r_2$.

\begin{defn}[Small Cancellation Condition]
Let $R\subseteq F(a_1,\dots, a_k)$ be a symmetrized set of cyclically
reduced words. Let $0<\lambda<1$. We say that $R$ satisfies the
\emph{$C'(\lambda)$-small cancellation condition} or that $R$ is a
\emph{$C'(\lambda)$-set} if, whenever $v$
is a piece with respect to $R$ and $v$ is a subword of some $r\in
R$, then $|v|<\lambda |r|$.

We say that a presentation $G=\langle a_1,\dots, a_k | R\rangle$
satisfies the \emph{$C'(\lambda)$-small cancellation condition}  if
$R\subseteq F(a_1,\dots, a_k)$ is a $C'(\lambda)$ set.
\end{defn}

The following fact is a well-known basic property of small cancellation groups~\cite{LS}:

\begin{prop}\label{prop:green}
Let $G=\langle a_1,\dots, a_k | R\rangle$ be a
$C'(\lambda)$-presentation, where $\lambda\le 1/6$. Let $w\in
F(a_1,\dots, a_k)$ be a nontrivial freely reduced word such that
$w=_G 1$. Then $w$ has a subword $u$ such that for some $r\in R$ the
word $u$ is a subword of $r$ satisfying $|u|>(1-3\lambda)|r|$.
\end{prop}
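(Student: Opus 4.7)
My plan is to prove Proposition~\ref{prop:green} by the classical van Kampen diagram approach that underlies Greendlinger's lemma. Since $w =_G 1$ and $w$ is nontrivial and freely reduced, van Kampen's Lemma supplies a reduced, simply connected, planar van Kampen diagram $D$ over the presentation $\langle a_1,\dots,a_k \mid R\rangle$ whose boundary reads $w$ and each of whose $2$-cells carries a label that is a cyclic conjugate of some element of $R$. Reducedness of $D$ forces every \emph{interior} edge --- one shared between two distinct $2$-cells --- to be labeled by a piece with respect to $R$; hence by the $C'(\lambda)$ hypothesis, every interior edge on the boundary of a face $\Pi$ has length strictly less than $\lambda\,|r_\Pi|$, where $r_\Pi$ denotes the relator labeling $\Pi$.

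If $D$ consists of a single $2$-cell, then $w$ is the boundary label of that cell, hence a cyclic conjugate of some $r^{\pm 1}\in R$; since $R$ is symmetrized, $w$ itself belongs to $R$, and $w$ is a subword of itself with $|w|>(1-3\lambda)|w|$. We may therefore assume $D$ has at least two $2$-cells.

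For the main case, the task is to produce a boundary $2$-cell $\Pi$ whose boundary meets $\partial D$ along a single simple arc of length greater than $(1-3\lambda)\,|r_\Pi|$; the label of this arc is then simultaneously a subword $u$ of $w$ and a subword of a cyclic conjugate of $r_\Pi$, as required. The existence of such a \emph{Greendlinger face} follows from the standard combinatorial/curvature argument of Lyndon--Schupp: invoking the planar Euler formula for the dual $1$-skeleton of $D$, the fact that interior vertices may be taken to have degree $\ge 3$, and the piece-length bound coming from $C'(\lambda)$ with $\lambda\le 1/6$, one shows that there must be a boundary face with at most three maximal interior arcs on its boundary, and, by an extremality argument over all such faces, that one may furthermore arrange for its intersection with $\partial D$ to be a single connected arc.

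The technical heart of the argument is precisely this combinatorial curvature computation (producing a face with at most three interior arcs whose boundary portion on $\partial D$ is connected). Since it is entirely standard and is worked out in full in Lyndon--Schupp~\cite{LS}, I would invoke it directly rather than reproduce it here, and the desired subword $u$ of $w$ then reads off from the long boundary arc of the Greendlinger face.
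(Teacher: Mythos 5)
Your proposal is correct and takes essentially the same route as the paper, which offers no proof of Proposition~\ref{prop:green} at all but states it as a well-known basic fact of small cancellation theory and cites \cite{LS} — precisely the van Kampen diagram / Greendlinger-face argument you sketch. The one point to keep in view when delegating to \cite{LS} is that it is the strong (at least two Greendlinger regions with disjoint exterior arcs) form of the diagram lemma that lets one choose an arc avoiding the basepoint, so that its label is a subword of $w$ itself and not merely of a cyclic permutation; this is already built into the statement you invoke.
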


\begin{defn}\label{defn:l-reduced}
Let $G=\langle a_1,\dots, a_k | R\rangle$ be a
$C'(\lambda)$-presentation, where $\lambda\le 1/100$. For a freely
reduced word $w\in F(a_1,\dots, a_k)$ we say that $w$ is \emph{Dehn
reduced with respect to $R$} if $w$ does not contain a subword $u$
such that $u$ is also a subword of some $r\in R$ with $|u|>|r|/2$.

We say that a freely reduced word $w\in F(a_1,\dots, a_k)$ is
\emph{$\lambda$-reduced with respect to $R$} if $w$ does not contain
a subword $u$ such that $u$ is also a subword of some $r\in R$ with
$|u|>(1-3\lambda)|r|$.

Similarly, we say that a cyclically reduced word $w\in F(a_1,\dots, a_k)$ is
\emph{$\lambda$-cyclically reduced with respect to $R$} if every
cyclic permutation of $w$ is $\lambda$-reduced with respect to $R$.
We also say that a cyclically reduced word $w\in F(a_1,\dots, a_k)$ is
\emph{cyclically Dehn-reduced with respect to $R$} if every
cyclic permutation of $w$ is Dehn-reduced with respect to $R$.

\end{defn}

Note that if $\lambda\le 1/6$ then any Dehn-reduced word is
$\lambda$-reduced. Proposition~\ref{prop:green} says that for a
$C'(\lambda)$-presentation with $\lambda\le 1/6$, any nontrivial freely
reduced and $\lambda$-reduced word in $F(a_1,\dots, a_k)$ represents a
nontrivial element of $G$.

The following statement follows from the basic results of small cancellation
theory, established in Ch. V, Sections 3-5 of \cite{LS} (see also \cite{Stre}).
\begin{prop}\label{prop:eq-conj}[Equality and Conjugacy diagrams in
  $C'(\lambda)$-groups]

Let \[ G=\langle a_1,\dots, a_k |
  R\rangle\tag{$\ast$} \] be a $C'(\lambda)$-presentation, where
  $\lambda\le 1/100$.

\begin{enumerate}

\item Let $w_1,w_2\in F(a_1,\dots, a_k)$ be freely reduced and
  $\lambda$-reduced words such that $w_1=_G w_2$. Then any reduced
  equality diagram $D$ over $(\ast)$, realizing the equality $w_1=_G
  w_2$, has the form as shown in Figure~\ref{Fi:eq}. Specifically, any
  region $Q$ of $D$ labeled by $r\in R$ intersects both the upper
  boundary of $D$ (labeled by $w_1$) and the lower boundary of $D$
  (labeled by $w_2$) in simple segments $\alpha_1, \alpha_2$
  accordingly, satisfying \[ \lambda|r|\le |\alpha_j|\le
  (1-3\lambda)|r|.  \] Moreover, if two regions $Q,Q'$ in $D$,
  labeled by $r,r'\in R$, have a common edge, then they intersect in
  closed simple segment $\gamma$ joining a point of the upper boundary
  of $D$ with a point of the lower boundary of $D$ and labeled by a
  piece with respect to $R$.  In particular $|\gamma|<\lambda |r|$ and
  $|\gamma|< \lambda|r'|$. Further, if both $w_1$ and $w_2$ are also
  Dehn-reduced, then for the segments $\alpha_j$ above we have
  $|\alpha_j|\ge (1/2-2\lambda)|r|\ge |r|/3$.

\begin{figure}[htb]
		\scalebox{.9}{\includegraphics{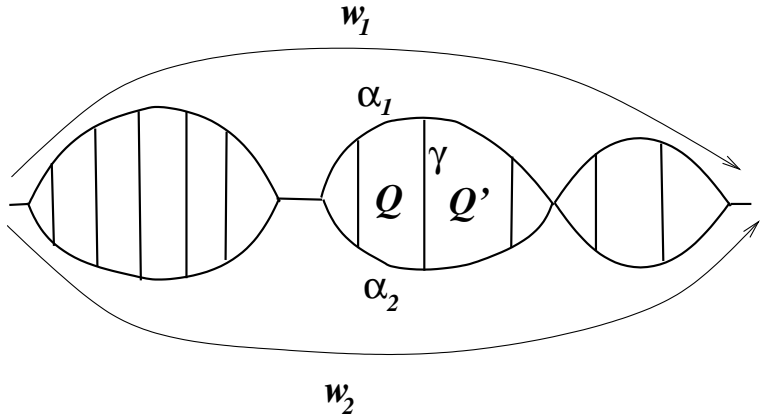}}
		\caption{Equality diagram in a small cancellation
                  group}
\label{Fi:eq}

\end{figure}

\item Let $w_1,w_2\in F(a_1,\dots, a_k)$ be nontrivial cyclically
    reduced and $\lambda$-cyclically reduced words
    representing conjugate elements of $G$. Then there exists a
    reduced conjugacy diagram $D$ over $(\ast)$ with the inner cycle
    boundary labeled by a cyclic permutation of $w_2$ and the outer
    cycle boundary labeled by a cyclic permutation of $w_1$, of the
    form shown in Figure~\ref{Fi:conj}.  Specifically, any region $Q$ of $D$
    labeled by $r\in R$ intersects both the outer boundary of $D$
    (labeled by a cyclic permutation of $w_1$) and the inner boundary
    of $D$ (labeled by a cyclic permutation of $w_2$) in simple
    segments $\alpha_1, \alpha_2$ accordingly, satisfying \[
    \lambda|r|\le |\alpha_j|\le (1-3\lambda)|r|.  \] Moreover, if two
    regions $Q,Q'$ in $D$, labeled by $r,r'\in R$, have a common
    edge, then they intersect in closed simple segment $\gamma$
    joining a point of the inner boundary of $D$ with a point of the
    outer boundary of $D$ and labeled by a piece with respect to $R$.
    In particular $|\gamma|<\lambda |r|$ and $|\gamma|<
    \lambda|r'|$. Further, if both $w_1$ and $w_2$ are also
    Dehn-reduced, then for the segments $\alpha_j$ above we have
    $|\alpha_j|\ge (1/2-2\lambda)|r|\ge |r|/3$.

\begin{figure}[htb]
		\scalebox{.7}{\includegraphics{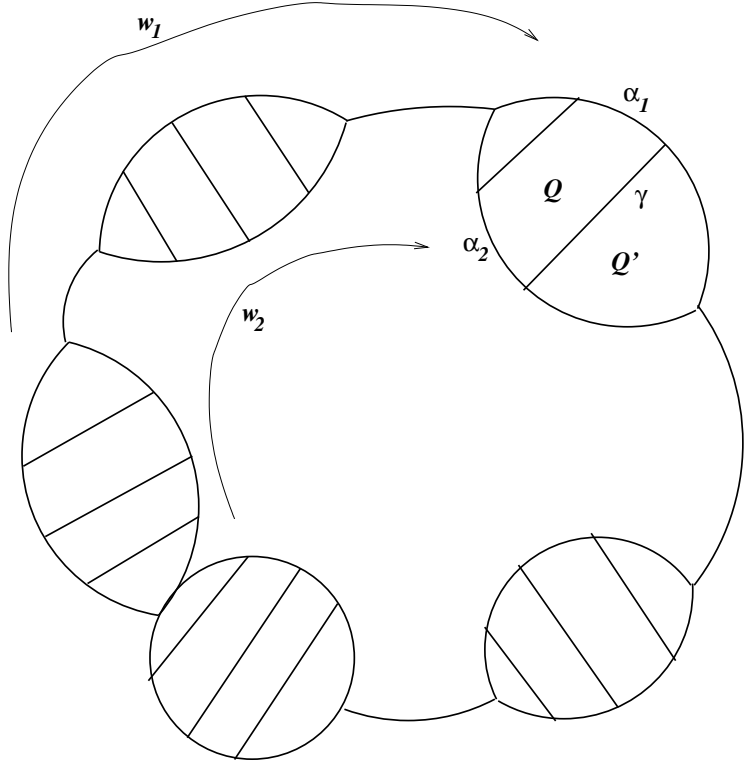}}
		\caption{Conjugacy diagram in a small cancellation
                  group}
\label{Fi:conj}

\end{figure}
 \end{enumerate}
\end{prop}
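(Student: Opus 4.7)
The plan is to follow the classical small cancellation machinery of Lyndon--Schupp (Ch. V, \S\S 3--5), applied once to a reduced van Kampen diagram for part (1) and once to a reduced annular (conjugacy) diagram for part (2). In both cases the fine geometric bounds come from the strengthened Greendlinger lemma combined with the $\lambda$-reducedness of the boundary labels.

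For part (1), I would fix a reduced diagram $D$ over $(\ast)$ with boundary subdivided as $w_1$ (upper) and $w_2^{-1}$ (lower); if $D$ has no $2$-cells there is nothing to check. Otherwise the strengthened Greendlinger lemma, valid for $C'(\lambda)$ with $\lambda\le 1/6$, produces a region $Q$ labeled by some $r\in R$ whose boundary meets $\partial D$ in an arc of length $>(1-3\lambda)|r|$. That arc cannot lie entirely on one side of $\partial D$: if it did, $w_1$ or $w_2$ would contain a subword of $r$ of length $>(1-3\lambda)|r|$, contradicting $\lambda$-reducedness. So $Q$ meets both boundary arcs, and a transverse path through $Q$ splits $D$ into two reduced sub-diagrams $D',D''$ whose upper/lower boundary labels remain $\lambda$-reduced (the new boundary segments introduced by the cut are pieces of $r$, hence shorter than $\lambda|r|$, so too short to create any new forbidden common factor with another relator). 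Induction on the number of regions then yields the ladder picture of Figure~\ref{Fi:eq}.

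The quantitative claims inside the ladder are local. The upper bound $|\alpha_j|\le(1-3\lambda)|r|$ just restates $\lambda$-reducedness of $w_j$. The complement of $\alpha_1\cup\alpha_2$ in $\partial Q$ consists of at most two arcs shared with neighboring regions, each a piece and hence of length $<\lambda|r|$; this gives $|\alpha_1|+|\alpha_2|>(1-2\lambda)|r|$, and combining with the upper bound on the opposite $|\alpha_i|$ forces $|\alpha_j|>\lambda|r|$. The piece bound on $\gamma$ is immediate from $C'(\lambda)$. Under the Dehn-reduced hypothesis we additionally have $|\alpha_j|\le|r|/2$ (otherwise $w_j$ would contain more than half of $r$), which fed back into the same decomposition sharpens the lower bound to $|\alpha_j|\ge(1/2-2\lambda)|r|$. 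Part (2) is the exact annular analogue: start with a reduced conjugacy diagram (whose existence for conjugate $\lambda$-cyclically reduced words is the standard conjugacy statement in Lyndon--Schupp Ch. V, \S 5) and run the same Greendlinger/ladder argument, with inner and outer boundary cycles replacing the upper and lower arcs. The main obstacle in both parts is the inductive cutting step: one must verify that after removing a Greendlinger region and splitting along a transverse path, the sub-diagrams $D',D''$ inherit boundary labels that are still $\lambda$-reduced. This is precisely where the $C'(\lambda)$ piece bound is used, ensuring that no short segment introduced by the cut can combine with a preexisting subword to produce a new long common factor with some $r'\in R$.
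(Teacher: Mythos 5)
Your proposal takes essentially the same route as the paper: the paper offers no argument of its own for Proposition~\ref{prop:eq-conj}, deriving it directly from the structure theory of reduced equality and annular diagrams over $C'(\lambda)$-presentations in Lyndon--Schupp, Ch.~V, \S\S 3--5 (and Strebel's appendix), which is exactly the Greendlinger/ladder machinery you invoke. Your local arc-counting, including the sharpened bound $|\alpha_j|\ge (1/2-2\lambda)|r|$ under the Dehn-reduced hypothesis, reproduces the computation the paper itself records in the remark following the proposition, so the two treatments agree.
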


\begin{rem} Suppose that $w_1$ and $w_2$
  are as in part (1) of Proposition~\ref{prop:eq-conj} and suppose
  that $w_1$ and $w_2$ are Dehn-reduced (which is a stronger
  assumption than being $\lambda$-reduced). Let $Q$ be a region of $D$
  intersecting the upper and the lower boundaries of $D$ in segments
  $\alpha_1$ and $\alpha_2$. Let $r\in R$ be the label of a boundary
  cycle of $Q$. Since
  the overlaps of $Q$ with the neighboring regions in $D$ have length
  at most $\lambda|r|$ each and since $|\alpha_j|\le |r|/2$ by
  Dehn-reduceness of $w_1$ and $w_2$, it follows that \[ |\alpha_j|\ge
  |r|-2\lambda|r|-|r|/2=|r|(1/2-2\lambda)\ge |r|\frac{48}{100}\ge
  |r|/3, \] since $\lambda\le 1/100$. Thus in this case \[ |r|/3\le
  \alpha_j\le |r|/2.  \] Similar conclusions apply to case (2) of
  Proposition~\ref{prop:eq-conj} if we assume $w_1$ and $w_2$ to be
  cyclically Dehn reduced there.
\end{rem}

\begin{cor}\label{cor:conj-sc} Let $G=\langle
  a_1,\dots, a_k |R\rangle$ be a $C'(\lambda)$ presentation where
  $\lambda\le 1/100$.  Suppose that for every
    $r\in R$ we have $|r|\ge 2/\lambda+1$. Then:
\begin{enumerate}
\item For $i\ne j$ the
    elements $a_i^{\pm 1}$ and $a_j^{\pm 1}$ are not conjugate in $G$,
    and $a_i$ is not conjugate to $a_i^{-1}$ in $G$.
\item Suppose that wherever $a_s^p$ is a subword of some $r\in R$,
    where $1\le s\le k$, then $|p|< \lambda |r|$. Then for any $i\ne
    j$ and any $m\ne 0, n\ne 0$, the elements $a_i^m$ and $a_j^n$ are
    not conjugate in $G$.
\item Suppose that $w$ is a cyclically reduced word in $F(A)$ that is
  conjugate in $G$ to $a_1$. Then either $w=a_1$ in $F(A)$ or $w$ is not
  cyclically $\lambda$-reduced.
\end{enumerate}
\end{cor}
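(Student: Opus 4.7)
The plan is to prove all three parts uniformly by applying Proposition~\ref{prop:eq-conj}(2) to the relevant pair of candidate conjugate words. The key numerical observation is that the hypothesis $|r|\ge 2/\lambda+1$ forces $\lambda|r|>1$; hence any region in a conjugacy diagram must meet the inner boundary in a segment of length strictly greater than $1$. This will immediately conflict with a boundary of length $1$ in parts (1) and (3), and, in part (2), with the assumed bound on the exponent of any power $a_s^p$ that sits inside a relator.

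First I would check that every word involved is cyclically $\lambda$-reduced with respect to $R$, so that Proposition~\ref{prop:eq-conj}(2) applies. A single generator $a_i^{\pm 1}$ trivially satisfies this, since $1<(1-3\lambda)|r|$ for all $r\in R$. For part (2), every subword of a cyclic permutation of $a_i^m$ has the form $a_i^q$, so by the extra hypothesis, whenever such a subword also sits inside some $r\in R$ its length is $<\lambda|r|<(1-3\lambda)|r|$; thus $a_i^m$ is cyclically $\lambda$-reduced. In part (3), $w$ is cyclically $\lambda$-reduced by assumption, and $a_1$ is by the first observation.

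Next, assume in (1) and (2) for contradiction, and in (3) in order to reach the conclusion, that the two words are conjugate in $G$. Apply Proposition~\ref{prop:eq-conj}(2) to obtain a reduced annular conjugacy diagram $D$ whose boundary cycles are labeled by cyclic permutations of the two words. The argument splits into two cases. If $D$ has no regions, then the inner and outer boundary cycles are equal as cyclic words in $F(A)$: in (1) this contradicts $a_i\ne a_j^{\pm 1}$ or $a_i\ne a_i^{-1}$; in (2) it contradicts the fact that $a_i^m$ and $a_j^n$ are nontrivial powers of distinct free generators; in (3) it gives $w=a_1$ in $F(A)$, the desired conclusion. If instead $D$ contains a region $Q$ labeled by some $r\in R$, then by Proposition~\ref{prop:eq-conj}(2) $Q$ meets the inner boundary of $D$ in a simple segment $\alpha$ with $|\alpha|\ge\lambda|r|\ge\lambda(2/\lambda+1)>2$. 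In parts (1) and (3) the inner boundary has total length $1$, which is impossible. In part (2), $\alpha$ is simultaneously a subword of a cyclic permutation of $a_j^n$, hence a power $a_j^q$ of length $|\alpha|=|q|$, and a subword of $r$; the extra hypothesis of (2) then forces $|\alpha|=|q|<\lambda|r|$, contradicting $|\alpha|\ge\lambda|r|$.

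The only step that needs care beyond routine bookkeeping is verifying cyclic $\lambda$-reducedness of $a_i^m$ in part (2), and that is exactly what the additional hypothesis on $a_s^p$-subwords of relators is designed to supply. No further obstacle is anticipated.
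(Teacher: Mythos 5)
Your argument is correct and is exactly the intended route: the paper states Corollary~\ref{cor:conj-sc} without a written proof, as an immediate consequence of Proposition~\ref{prop:eq-conj}(2), and your case split (degenerate annular diagram forcing conjugacy already in $F(A)$, versus a region whose inner-boundary segment has length at least $\lambda|r|\ge 2+\lambda$, impossible against a boundary of length $1$ in (1) and (3) and against the exponent hypothesis in (2)), together with the cyclic $\lambda$-reducedness checks, is precisely that argument. The only microscopic omission is ruling out $w=1$ in part (3), needed because the proposition requires nontrivial words; this follows at once from Proposition~\ref{prop:green}, since $a_1\neq 1$ in $G$ as $|a_1|=1<(1-3\lambda)|r|$ for every $r\in R$.
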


\begin{lem}\label{lem:sc-conj}
Let $G=\langle a_1,\dots, a_k |R\rangle$ be a $C'(\lambda)$ presentation where
$\lambda\le 1/100$. Let $d_G$ be the word-metric on $G$ corresponding
to the generating set $\{a_1,\dots, a_k\}$. Let $g\in G, g\ne 1$ be arbitrary. Then there
exist freely reduced words $u,v\in F(a_1,\dots, a_k)$ such that:
\begin{enumerate}
\item We have $g=_G v u v^{-1}$.
\item The word $u$ is cyclically reduced and cyclically Dehn-reduced. Moreover, $u$ is $d_G$-geodesic and the element of $G$ represented by $u$ is of shortest possible length among all elements conjugate to $g$ in $G$.
\item The word $v$ is Dehn-reduced.
\item The word $vuv^{-1}$ is freely reduced, as written, and is $\lambda$-reduced.
\item If $z$ is any $\lambda$-reduced word in $F(a_1,\dots, a_k)$
  representing $g$, then either $z=vuv^{-1}$ in $F(a_1,\dots, a_k)$ or
  there is some $r\in R$ such that both $z$ and $vuv^{-1}$ contain
  subwords of $r$ of length $\ge \lambda |r|$.
\end{enumerate}
\end{lem}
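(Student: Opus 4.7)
The plan is a two-stage minimization followed by a case analysis. First, pick $u$ to be a cyclically reduced word of minimum length among all cyclically reduced words representing a conjugate of $g$ in $G$, so that $|u|$ equals the conjugacy length of $g$. Standard small-cancellation arguments immediately yield that $u$ is cyclically Dehn-reduced and $d_G$-geodesic: a Dehn reduction on any cyclic permutation of $u$ would produce a strictly shorter cyclically reduced representative of the conjugacy class, and any shorter freely reduced word equal to $u$ in $G$ would likewise give a shorter conjugate after cyclic reduction. Second, among all pairs $(u',v)$ with $u'$ a cyclically reduced word of length $|u|$ representing a conjugate of $g$ and $v$ freely reduced with $vu'v^{-1}=_G g$, pick one minimizing $|v|$, and relabel as $(u,v)$.

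Properties (3) and the freely-reduced part of (4) now follow from the minimality. For (3), a Dehn violation in $v$ would yield a shorter freely reduced $v'$ with $v'=_G v$, hence $v'u(v')^{-1}=_G g$, contradicting minimality of $|v|$. For the ``freely reduced as written'' part of (4), if the last letter of $v$ equals $u_1^{-1}$ (with $u=u_1\cdots u_n$), then writing $v=v'u_1^{-1}$ we have $vuv^{-1}=v'(u_2\cdots u_nu_1)(v')^{-1}$, where $u_2\cdots u_nu_1$ is a cyclic permutation of $u$, so the admissible pair $(u_2\cdots u_nu_1,v')$ has strictly smaller conjugator, contradicting minimality; the case where the last letter of $v$ equals $u_n$ is symmetric.

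The crux is the $\lambda$-reducedness part of (4). Suppose $vuv^{-1}$ contains a subword $x$ of some $r\in R$ with $|x|>(1-3\lambda)|r|$, and classify $x$ by position. If $x$ lies entirely inside $v$, $u$, or $v^{-1}$, then since $1-3\lambda>1/2$ one immediately contradicts (3) or (2) (using symmetrization of $R$). If $x$ straddles only the $v$-$u$ boundary, write $v=v_1x_1$, $u=x_2u_2$, $x=x_1x_2$; the symmetrized cyclic permutation $x\rho\in R$ with $|\rho|<3\lambda|r|$ gives $x_1x_2=_G\rho^{-1}$, and the pair $(u^*=u_2x_2,\,v^*=v_1\rho^{-1})$ satisfies $g=_G v^*u^*(v^*)^{-1}$ with $|v^*|\le|v|-|x_1|+|\rho|$. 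When $|x_1|>3\lambda|r|$, this gives $|v^*|<|v|$, contradicting minimality; otherwise $|x_1|\le 3\lambda|r|$ forces $|x_2|>(1-6\lambda)|r|>|r|/2$, and $x_2$ is a subword of $u$ that is also a subword of $r$, contradicting (2). The $u$-$v^{-1}$ boundary case is symmetric. In the remaining case, $x$ covers all of $u$ and straddles both boundaries (so $|u|\le|r|/2$), and one writes $x=s_1ut^{-1}$ with $s_1,t$ suffixes of $v$ (say $|s_1|\le|t|$, $t=ws_1$); the analogous relator substitution produces $g=_G v_2(wy_1^{-1}y_2^{-1})v_2^{-1}$ where $v=v_2t$. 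The word $wy_1^{-1}y_2^{-1}$ is then freely reduced, cyclically reduced, and Dehn-reduced to yield a cyclically Dehn-reduced word $u^*$; because Dehn reductions preserve the group element, the conjugator transforms to $v^{**}=v_2\gamma$ with $|\gamma|\le|wy_1^{-1}y_2^{-1}|/2$. A direct length count, using $|s_1|+|u|+|t|>(1-3\lambda)|r|$ and $\lambda\le 1/100$, shows $|v^{**}|<|v|$; combined with $|u^*|=|u|$ (the minimum conjugacy length, enforced by cyclic Dehn-reducedness), this contradicts minimality.

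Finally, (5) is an immediate application of Proposition~\ref{prop:eq-conj}(1) to the equality $vuv^{-1}=_G z$: both words are freely reduced and $\lambda$-reduced, so the reduced equality diagram either has no regions (giving $vuv^{-1}=z$ in $F(a_1,\dots,a_k)$) or contains at least one region whose label $r\in R$ intersects each of the two boundaries in an arc of length $\ge\lambda|r|$, yielding the required common subwords. The main obstacle is the ``$x$ covers all of $u$'' subcase in the previous paragraph: the natural rewriting yields a conjugator for a cyclically reduced word that is conjugate to $u$ but a priori need not have minimum length, and the resolution requires carefully tracking the Dehn reductions so that the conjugator is preserved while the word is driven down to minimum conjugacy length.
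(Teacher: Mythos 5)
Your setup (two-stage minimization: first the cyclically reduced word of minimal conjugacy length, then the conjugator; then a case analysis on the position of a long relator subword in $vuv^{-1}$) is the same as the paper's, and your treatment of the cases where the offending subword lies in one of $v,u,v^{-1}$ or straddles a single boundary is correct and matches the paper's Case 1, as does your deduction of (5) from Proposition~\ref{prop:eq-conj}. The problem is the final case, where the relator subword covers all of $u$ and meets both $v$ and $v^{-1}$ -- and you acknowledge as much. Your rewriting produces $g=_G v_2(wy^{-1})v_2^{-1}$, but the contradiction you draw needs two facts you do not have: (i) that after free/cyclic/Dehn reduction the new centre $u^*$ has length exactly the minimal conjugacy length $|u|$ -- cyclic Dehn-reducedness does \emph{not} enforce this (the conjugacy diagrams of Proposition~\ref{prop:eq-conj}(2) only give approximate length equality between cyclically Dehn-reduced conjugates), and if $|u^*|>|u|$ the pair $(u^*,v^{**})$ simply does not compete in your minimization, so no contradiction results; and (ii) the bound $|\gamma|\le |wy^{-1}|/2$ on the conjugator accumulated during the reduction -- Dehn reductions must be applied to cyclic permutations, each of which shifts the basepoint and feeds a prefix of the current word into the conjugator, and nothing in your argument controls the total growth over several such steps. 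So the case is genuinely open in your write-up, not just a technicality.

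The missing idea is much simpler and purely small-cancellation-theoretic. In that configuration the overlap of the relator subword with $v$ is a terminal segment of $v$, and its overlap with $v^{-1}$ is the inverse of a terminal segment of $v$; hence the shorter of the two, call it $p$, occurs in $r$ together with $p^{-1}$ (in disjoint positions). If both overlaps have length $>\lambda|r|$ this contradicts the $C'(\lambda)$ condition, since $p$ is then a piece of length $\ge\lambda|r|$. If instead one of the overlaps has length $\le\lambda|r|$, discard it: the remaining subword lies in $vu$ (or $uv^{-1}$) and still has length $\ge(1-4\lambda)|r|$, so your already-correct single-boundary argument applies verbatim. This is exactly how the paper disposes of its Case 2, avoiding any re-reduction of the new centre and hence the issues in (i) and (ii) above.
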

\begin{proof}

Consider all representations of $g$ as $g=hg_0h^{-1}$ where $g_0\in G$
is shortest in the conjugacy class of $g$. Once $g_0$ is fixed, among all such
representations of $g$ as $g=hg_0h^{-1}$, choose one, $g=hg_0h^{-1}$, where $h\in G$ is
the shortest possible.  Let $u$ be a $G$-geodesic representative of
$g_0$ and let $v$ be a $G$-geodesic representative of $h$.
Note that the minimality in the choice of $g_0$ implies that $u$ is
cyclically reduced and cyclically Dehn-reduced. Also, the minimality
in the choice of $h$ implies that the word $vuv^{-1}$ is freely
reduced as written. Since $v$ is a geodesic word, it is Dehn-reduced.
We claim that the word $vuv^{-1}$ is $\lambda$-reduced. Indeed,
suppose not. Then $vuv^{-1}$ contains a subword $w$ such that $w$ is
also a subword of some $r\in R$ with $|w|\ge (1-3\lambda)|r|$.

Note that since $u$ and $v$ are Dehn-reduced, the subword $w$ overlaps
at least two of the subwords $v,u,v^{-1}$ in $vuv^{-1}$.

{\bf Case 1.} The word $w$ is a subword of $vu$ or of $uv^{-1}$.
We assume that $w$ is a subword of $vu$, as the other case is
symmetric. Then $w=w_1w_2$, where $w_1$ is a terminal segment of $v$
and $w_2$ is an initial segment of $u$. We write $v$ and $u$ as
$v=v'w_1$ and $u=w_2u'$. Let $y\in F(A)$ be such that $r=wy=w_1w_2y$
in $F(A)$,
so that $|y|< 3\lambda|r|$. Note that since $u$ and $v$ are
Dehn-reduced and $|w|\ge (1-3\lambda)|r|$, it follows that
$|w_1|,|w_2|\ge (\frac{1}{2}-3\lambda)|r|\ge |r|/3> 3\lambda|r|$.
Observe also that $v=v'w_1=_Gv'y^{-1}w_2^{-1}$. Therefore
\begin{gather*}
g=_G vuv^{-1}=(v'w_1)(w_2u')(w_1^{-1}(v')^{-1})=_G\\
=_G v'y^{-1}w_2^{-1}w_2u'w_2y(v')^{-1}=(v'y^{-1}) (u'w_2)(y(v')^{-1}).
\end{gather*}
Since $|y|< 3\lambda|r|$ and $|w_1|>3\lambda|r|$, it follows that
$|v'y^{-1}|_A<|v|$, contradicting the minimality in the choice of $h$.

{\bf Case 2.} The subword $w$ overlaps  both $v$ and $v^{-1}$ in $vuv^{-1}$

If the overlap of $w$ with one of
$v,v^{-1}$ has length $\le \lambda|r|$, then either $vu$ or $uv^{-1}$
contains a subword of $r$ of length $\ge (1-5\lambda)|r|$, and we get
a contradiction similarly to Case 1.

If the overlaps of $w$ with each of
$v,v^{-1}$ have length $>\lambda|r|$, we get a contradiction with
the $C'(\lambda)$-small cancellation condition.

Thus part (1), (2), (3) and (4) of the lemma are established. Since the words
$vuv^{-1}$ and $z$ are both $\lambda$-reduced, part (5) of the lemma now follows
from Proposition~\ref{prop:eq-conj}.
\end{proof}

\section{Genericity}\label{sect:gen}

In this paper we work with the Arzhantseva-Ol'shanskii model of
genericity in free groups (see~\cite{AO,KMSS,KS}) based on the asymptotic
density considerations.

\begin{conv}
Let $F=F(A)$, where
$A=\{a_1,\dots, a_k\}$ and $k\ge 2$. Let $m\ge 1$ and let $U\subseteq
F^m$ be a subset of $F^m$. For $n\ge 0$ we denote by $\gamma_A(n,U)$ the
number of all $m$-tuples $(u_1,\dots, u_m)\in U$ such that $|u_i|_A=n$
for $i=1,\dots, m$. Note that for $n\ge 1$ we have
$\gamma_A(n,F^m)=\left(2k(2k-1)^{n-1}\right)^m$. We say that
$U\subseteq F^m$ is \emph{spherically homogeneous} if for every
$(u_1,\dots, u_m)\in U$ we have $|u_1|_A=\dots =|u_m|_A$. Note that
this restriction is vacuous if $m=1$. Let $S_m=S_{m,A}$ denote the set of all
$m$-tuples $(u_1,\dots, u_m)\in F^m$ such that $|u_1|_A=\dots
=|u_m|_A$.

Let $\mathcal C=\mathcal C_A$ be the set of all cyclically reduced words in $F(A)$.
Let $\mathcal C_m=\mathcal C_{m,A}=\mathcal C^m\cap S_m$. Thus $\mathcal C_m$ consists of all $m$-tuples $(u_1,\dots, u_m)$ of cyclically reduced words in $F(A)$ such that $|u_1|=\dots =|u_m|$.
\end{conv}

Although the notion of genericity make sense for arbitrary subsets of
$F^m$, for reasons of simplicity we will restrict ourselves to
spherically homogeneous subsets in this paper. Moreover, in applications we will only be concerned with tuples of cyclically reduced words.

\begin{defn} Let $F=F(A)$, where $A=\{a_1,\dots, a_k\}$ and $k\ge
  2$. Let $m\ge 1$. Let $\Omega\subseteq S_m$ be a spherically homogeneous subset. Let $U\subseteq \Omega$.

\begin{enumerate}
\item We say that \emph{$U$ is generic in $\Omega$} if
  \[
\lim_{n\to\infty}\frac{\gamma_A(n,U)}{\gamma_A(n,\Omega)}=1.
\]
If, in addition, the convergence to $1$ in the above limit is
exponentially fast, we say that $U$ is \emph{exponentially generic
  in $\Omega$}.

\item We say that a subset $U\subseteq \Omega$ is
  \emph{negligible in $\Omega$} (correspondingly \emph{exponentially
    negligible in $\Omega$}) if $\Omega\setminus U$ is generic (correspondingly
  exponentially generic) in $\Omega$.
\end{enumerate}

We stress that the above notions of genericity and negligibility are
highly dependent on the choice of a free basis $A$ of $F$. Therefore
in all our discussions regarding genericity such a free basis is
assumed to be fixed.

\end{defn}

The following is a straightforward corollary of the definitions:
\begin{lem}
Let $U\subseteq \mathcal C_m$.
Then the following are equivalent:
\begin{enumerate}
\item The set $U$ is exponentially negligible in $\mathcal C_m$.
\item We have
  \[
\lim_{n\to\infty}\frac{\gamma_A(n,U)}{(2k-1)^{nm}}=0,
\]
with exponentially fast convergence.
\item We have
\[
\limsup_{n\to\infty} \frac{\log \gamma_A(n,U)}{nm}<\log(2k-1).
\]
\end{enumerate}

\end{lem}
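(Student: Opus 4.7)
The plan is to reduce everything to a standard asymptotic count of cyclically reduced words and then to chase the definitions. The key preliminary observation I would record first is that the number of cyclically reduced words of length $n$ in $F(A)$, call it $c(n)$, satisfies $c(n) = (2k-1)^n (1 + o(1))$; concretely $c(n) = (2k-1)^n + (2k-1)(-1)^n$ for $n \geq 1$, but all I actually need is that there exist constants $0 < c_1 \le c_2$ (independent of $n$) such that
\[
c_1 (2k-1)^n \;\le\; c(n) \;\le\; c_2 (2k-1)^n \qquad \text{for all } n \geq 1.
\]
Since $\mathcal C_m$ consists of $m$-tuples of cyclically reduced words of common length, $\gamma_A(n,\mathcal C_m) = c(n)^m$, and hence
\[
c_1^m (2k-1)^{nm} \;\le\; \gamma_A(n,\mathcal C_m) \;\le\; c_2^m (2k-1)^{nm}.
\]

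For (1)$\Leftrightarrow$(2), I would just unwind the definition. By definition, $U$ is exponentially negligible in $\mathcal C_m$ iff $\gamma_A(n,\mathcal C_m\setminus U)/\gamma_A(n,\mathcal C_m) \to 1$ with exponentially fast convergence, equivalently iff $\gamma_A(n,U)/\gamma_A(n,\mathcal C_m) \to 0$ exponentially fast. The two-sided comparison displayed above shows that the ratios $\gamma_A(n,U)/\gamma_A(n,\mathcal C_m)$ and $\gamma_A(n,U)/(2k-1)^{nm}$ differ by a bounded multiplicative factor (bounded above and below by positive constants), so one decays to $0$ at an exponential rate if and only if the other does.

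For (2)$\Leftrightarrow$(3), I would take logarithms. If (2) holds then there exist $C>0$ and $0<\rho<1$ with $\gamma_A(n,U) \le C \rho^n (2k-1)^{nm}$, so
\[
\frac{\log \gamma_A(n,U)}{nm} \;\le\; \log(2k-1) + \frac{\log \rho}{m} + \frac{\log C}{nm},
\]
and the right-hand side has limit strictly less than $\log(2k-1)$, yielding (3). Conversely, assuming (3), pick $L'$ with $\limsup \tfrac{\log \gamma_A(n,U)}{nm} < L' < \log(2k-1)$; then for all sufficiently large $n$ we have $\gamma_A(n,U) \le e^{nmL'}$, whence $\gamma_A(n,U)/(2k-1)^{nm} \le (e^{L'}/(2k-1))^{nm}$, a geometric sequence with ratio less than $1$, giving exponentially fast decay as in (2).

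There is no real obstacle here; the only nontrivial ingredient is the two-sided estimate $c(n) \asymp (2k-1)^n$, and even that can be replaced by the cruder bound $c(n) \le 2k(2k-1)^{n-1}$ together with an easy lower bound obtained by fixing the first and last letters. The rest is bookkeeping with definitions and logarithms.
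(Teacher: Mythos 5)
Your argument is correct and is exactly the routine unwinding of the definitions that the paper has in mind (it states the lemma as "a straightforward corollary of the definitions" and gives no proof): the two-sided bound $\gamma_A(n,\mathcal C_m)=c(n)^m\asymp (2k-1)^{nm}$ reduces (1) to (2), and taking logarithms gives (2)$\Leftrightarrow$(3). One small caveat: your parenthetical exact formula for the number of cyclically reduced words is off (the correct count is $(2k-1)^n+(k-1)(-1)^n+k$), but since you only use the comparison $c_1(2k-1)^n\le c(n)\le c_2(2k-1)^n$, which is easily justified as you indicate, this does not affect the proof.
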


Similarly, the definitions imply:

\begin{lem} Let $m\ge 1$.
\begin{enumerate}
\item The union of a finite number of (exponentially) negligible
  subsets of $\mathcal C_m$ is (exponentially)
  negligible in $\mathcal C_m$.
\item The intersection of a finite number of (exponentially) generic
  subsets of $\mathcal C_m$ is (exponentially) generic
  in $\mathcal C_m$.
\item Let $U\subseteq \mathcal C$ be an exponentially generic subset. Then
  $U^m\cap S_m$ is exponentially generic in $\mathcal C_m$.
\end{enumerate}
\end{lem}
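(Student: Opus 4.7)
All three parts will follow quickly from the definition of $\gamma_A(n,\cdot)$ and the product structure $\mathcal C_m = \mathcal C^m \cap S_m$. I would prove them in the order given.

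For (1), the plan is subadditivity. Given finitely many (exponentially) negligible $U_1,\dots,U_r \subseteq \mathcal C_m$, the trivial bound
\[
\gamma_A(n, U_1\cup\dots\cup U_r) \le \sum_{i=1}^r \gamma_A(n,U_i)
\]
followed by division by $\gamma_A(n,\mathcal C_m)$ shows that the normalized left-hand side is bounded above by a finite sum of terms tending to $0$; if each term goes to $0$ exponentially fast, so does their sum. This is enough. Part (2) then drops out of (1) by De Morgan: the intersection of finitely many (exponentially) generic subsets of $\mathcal C_m$ is the complement of the union of their (exponentially) negligible complements, to which (1) applies.

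For (3), the key observation I would exploit is that choosing an element of $\mathcal C_m$ all of whose components have length $n$ is the same as independently choosing $m$ cyclically reduced words of length $n$. This gives the factorizations
\[
\gamma_A(n,\mathcal C_m) = \gamma_A(n,\mathcal C)^m, \qquad \gamma_A(n,U^m\cap S_m) = \gamma_A(n,U)^m.
\]
Setting $p_n := \gamma_A(n,U)/\gamma_A(n,\mathcal C)$, exponential genericity of $U$ in $\mathcal C$ means that $0 \le 1-p_n \le C\mu^n$ for some constants $C>0$, $0<\mu<1$ and all sufficiently large $n$. By Bernoulli's inequality,
\[
1 - p_n^m \le m(1-p_n) \le mC\mu^n,
\]
so $p_n^m \to 1$ exponentially fast, which is exactly exponential genericity of $U^m\cap S_m$ in $\mathcal C_m$.

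There is no genuine obstacle: everything is a formal manipulation of the density definition. The one step worth flagging is the factorization of $\gamma_A(n,\cdot)$ on product-like subsets of $\mathcal C_m$, which turns the problem into one about the $m$-th power of a single ratio; after that, the elementary inequality $(1-\epsilon)^m \ge 1-m\epsilon$ does all the work in passing from exponential decay of $1-p_n$ to exponential decay of $1-p_n^m$.
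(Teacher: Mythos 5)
Your proof is correct, and it is exactly the routine verification the paper has in mind: the lemma is stated there without proof as an immediate consequence of the definitions, and your subadditivity/De Morgan arguments for (1)--(2) and the factorization $\gamma_A(n,U^m\cap S_m)=\gamma_A(n,U)^m$ with Bernoulli's inequality for (3) are the standard way to fill that in. No issues.
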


\begin{conv}
We say that a certain property of $m$-tuples of cyclically reduced elements of $F=F(A)$
is \emph{generic} (correspondingly, \emph{exponentially generic}) if there exists a
generic (correspondingly, exponentially generic) subset $U\subseteq
\mathcal C_m$ such that every $m$-tuple in $U$ has the property in question.
\end{conv}

We list some properties of $m$-tuples of cyclically reduced elements
of $F$ that are known to be
exponentially generic (see~\cite{AO,KS,KKS}).

\begin{prop}\label{prop:gen}
Let $F=F(A)$, where $A=\{a_1,\dots, a_k\}$ and $k\ge 2$.
  Let $m\ge 1$. Then:
\begin{enumerate}
\item The property that no element of an $m$-tuple is a proper power
  in $F$ is exponentially generic in $\mathcal C_m$.
\item Let $0<\lambda<1$ be arbitrary. Then the property that an $m$-tuple
  $(u_1,\dots, u_m)$, after cyclic reduction and symmetrization,
  satisfies the $C'(\lambda)$ small cancellation condition, is
  exponentially generic in $\mathcal C_m$.
\item The property that for an $m$-tuple
  $(u_1,\dots, u_m)$ for every $i\ne j$ the element $u_i$ is not conjugate
  to $u_j^{\pm 1}$ in $F$, is exponentially generic in $\mathcal C_m$.
\item Let $K\ge 1$ be an integer and let $0<\lambda<1$. Then the
  property of an  $m$-tuple $(u_1,\dots, u_m)$ that every
  subword $u$ of some $u_i$ of length $\ge \lambda |u_i|$ contains as a
  subword every freely reduced word of length $\le K$ in $F(A)$, is
  exponentially generic in $\mathcal C_m$.
\end{enumerate}
\end{prop}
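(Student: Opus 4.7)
The plan is to prove each of the four parts by a direct counting argument, showing in each case that the set of ``bad'' $m$-tuples has cardinality at length $n$ growing exponentially more slowly than the total count $\gamma_A(n,\mathcal{C}_m)$. The key preliminary estimate is that the number of cyclically reduced words of length $n$ in $F(A)$ is $(2k-1)^n + O\bigl((2k-1)^{n/2}\bigr)$, so $\gamma_A(n,\mathcal{C}_m) = \Theta\bigl((2k-1)^{nm}\bigr)$. It therefore suffices in each part to bound the number of bad tuples of length $n$ by $C\,(2k-1)^{nm - \delta n}$ for some fixed $\delta>0$, independent of $n$.

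Parts (1) and (3) I would dispatch first, as they are the simplest. For (1), every cyclically reduced proper power of length $n$ has the form $w^p$ with $p\ge 2$ and $|w|\le n/2$, so the number of such words is at most $n\cdot 2k\,(2k-1)^{n/2 - 1}$; the number of $m$-tuples in $\mathcal{C}_m$ with some proper-power entry is then at most $m\cdot n\cdot 2k\,(2k-1)^{n/2}\cdot (2k-1)^{n(m-1)}$, which is $O\bigl(mn\,(2k-1)^{nm - n/2}\bigr)$. For (3), two cyclically reduced words in $F(A)$ are conjugate if and only if they are cyclic permutations of each other, so $u_i$ conjugate to $u_j^{\pm 1}$ leaves at most $2n$ possibilities for $u_j$ once $u_i$ is fixed; summation over the $O(m^2)$ ordered index pairs gives a bound of $O\bigl(m^2 n\,(2k-1)^{n(m-1)}\bigr)$ on the bad set.

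For part (2) I would follow the Arzhantseva-Ol'shanskii strategy. After symmetrization, a $C'(\lambda)$ violation is witnessed by two distinct elements of the symmetrized set sharing an initial segment of length $\ge \lambda n$; each such witness is encoded by two coordinates $i_1,i_2\in\{1,\dots,m\}$, two signs $\epsilon_1,\epsilon_2\in\{\pm 1\}$, and two cyclic starting positions $p_1,p_2\in\{0,\dots,n-1\}$, giving $O(m^2 n^2)$ configurations. For a fixed configuration, the overlap constraint specifies $\lceil \lambda n\rceil$ letters of the tuple, cutting the count of consistent tuples by at least a factor of $(2k-1)^{\lambda n - O(1)}$ compared to the unconstrained count; a mild case analysis separates the generic case $i_1\ne i_2$ from the diagonal case $i_1=i_2$, where the constraint becomes a self-overlap/periodicity condition on the single word $u_{i_1}$. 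Union-bounding over the $O(m^2 n^2)$ configurations preserves the exponential decay. Part (4) is similar in spirit: for each freely reduced $w$ with $|w|\le K$, each coordinate $i$, and each starting position $q\in\{0,\dots,n-1\}$ in $u_i$, a standard subword-avoidance estimate (via transfer matrices, or by direct counting of words over $A^{\pm 1}$ avoiding a fixed pattern) shows the fraction of cyclically reduced $u_i$ such that the length-$\lceil\lambda n\rceil$ subword at position $q$ avoids $w$ is $O(\alpha^n)$ for some $\alpha<1$ uniform in $|w|\le K$; summing over the $\le (2k)^K$ choices of $w$, the $n$ choices of $q$, and the $m$ coordinates preserves exponential negligibility.

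The only nontrivial step is part (2), where one must carefully enumerate all potential witnesses of a $C'(\lambda)$ violation in terms of cyclic permutations of $u_i^{\pm 1}$ and handle the diagonal case $i_1=i_2$ (where the argument is no longer a conditional estimate between two independent coordinates but a structural periodicity constraint on a single $u_i$). Once this standard formalism is in place, parts (1), (3), and (4) fall out quickly, and the union of four exponentially negligible bad sets remains exponentially negligible, completing the proof.
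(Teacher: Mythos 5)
Your plan is essentially sound, but note that the paper itself does not prove Proposition~\ref{prop:gen}: it records these facts as known and simply refers to \cite{AO,KS,KKS}, so there is no in-paper argument to compare against, and what you have written is in substance the standard Arzhantseva--Ol'shanskii counting argument underlying those references. Since $\gamma_A(n,\mathcal C_m)=\Theta\bigl((2k-1)^{nm}\bigr)$, it suffices to bound each bad set by $C(2k-1)^{nm-\delta n}$, and your counts for (1), (3) and (4) do exactly that and are correct (for (4) the subword-avoidance bound is indeed uniform over the finitely many $w$ with $|w|\le K$). The one place where your sketch stops short of a proof is part (2): you correctly isolate the diagonal case $i_1=i_2$ as the delicate one, but the actual estimate there still needs to be carried out, namely splitting according to whether the two occurrences of the common segment inside the same cyclic word $u_{i_1}^{\pm 1}$ are essentially disjoint (then the second occurrence is determined by the first, saving a factor of at least $(2k-1)^{\lambda n/2}$) or overlap substantially (then $u_{i_1}$ contains a subword of length $\ge\lambda n$ that is periodic with short period, and such words number at most roughly $n^2(2k-1)^{n-\lambda n/2}$); one must also absorb the degenerate situations where two distinct triples $(i,\varepsilon,p)$ yield the same relator as a word, which forces $u_{i_1}$ to be a proper power or $u_{i_1},u_{i_2}$ to be cyclic permutations of one another --- cases already negligible by (1) and (3). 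With those details filled in, your union bound over the $O(m^2n^2)$ configurations goes through; so the proposal is a correct plan that reconstructs the standard proof from the cited literature rather than a genuinely different route.
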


\section{Stallings folds and Nielsen equivalence}\label{stallings}

In this section we review Stallings folds and prove a weak version of our main theorem. The proof of this special case is quite easy but still illustrates some of the ingredients of the proof of the general case.

\smallskip In a beautiful article \cite{St} Stallings used graphs to
represent subgroups of free groups and described how to use simple
operations called \emph{folds} to transform the graph into a graph at which a
basis of the subgroup can be read off. We briefly discuss Stallings folds for
the free group $F=F(a_1,\ldots,a_k)$ and refer the reader to \cite{KM}
for more detailed background information.

\smallskip

Put $A=\{a_1,\dots, a_k\}$. Let $R_A$ be the directed labeled graph consisting of a single vertex $v_0$ and $k$ loop-edges $e_1,\ldots ,e_k$ with labels $a_1,\ldots,a_k$, respectively. There is an obvious isomorphism $$\phi:\pi_1(R_A,v_0)\to F$$  that maps  the homotopy class represented by the loop edge $e_i$, travelled in positive direction, to $a_i$.

	\begin{figure}[htb]
		\input{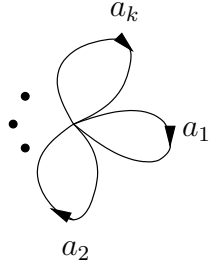}
		\caption{The graph $R_A$}
	\end{figure}

\smallskip An  \emph{$A$-graph} is a directed labeled graph $\Gamma$
with edge-labels from $A$. For any $A$-graph $\Gamma$ there exists a unique label-preserving graph-map $p:\Gamma\to R_A$. After choosing a base-vertex $x$ of $\Gamma$, the morphism $p$ induces a homomorphism $p_*:\pi_1(\Gamma,x)\to \pi_1(R_A,v_0)$.

\smallskip Given a  tuple $T=(g_1,\ldots,g_s)$ of elements from
$\pi_1(R_A,v_0)=F$ we can construct a graph $S_T$ with base vertex $x_0$ such that $p_*(\pi_1(S_T,x_0))=\langle g_1,\ldots ,g_s\rangle$ as
follows. We assume that $g_i\neq 1$ for $1\le i\le l\le s$ and
$g_i= 1$ for $l<i\le s$.

\begin{enumerate}
\item $S_T$ is a wedge of $l$ circles with base vertex $x_0$ where the  $i$th circle is of simplicial length $|g_i|_A$.
\item For $i=1,\dots, l$, the label of the $i$-th circle is the reduced word representing $g_i$.
\end{enumerate}

	\begin{figure}[htb]
		\input{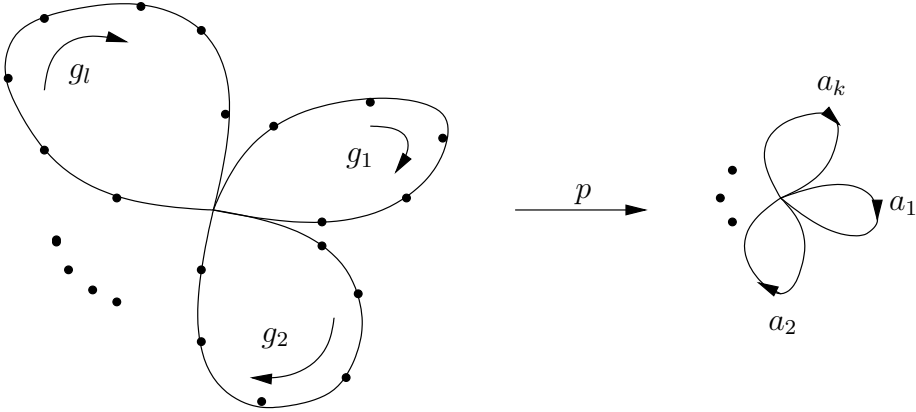}
		\caption{The map $p$ from $S_T$ to $R_A$}
	\end{figure}

Stallings observed that any such $A$-graph $S_T$ (and, more generally,
any finite connected $A$-graph) can be modified by folds without
changing the image of the induced homomorphism so that the resulting
graph is \emph{folded}, i.e. the associated morphism $p$ is an
immersion. A \emph{fold} here is the identification of two edges that
have the same label and the same initial vertex or the same terminal
vertex. Note that one or both edges can be loop-edges. 

\begin{figure}[htb]{\input{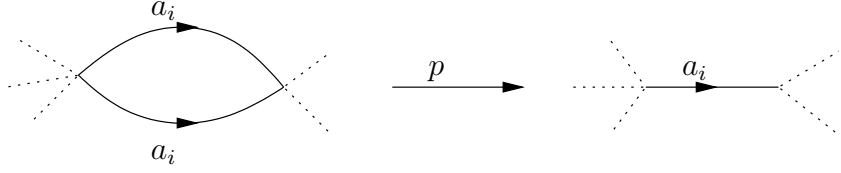}}
		\caption{A fold identifying two edges labeled $a_i$}
	\end{figure}
	
	If $\Gamma$ is a folded $A$-graph, then $p:\Gamma\to R_A$ maps reduced edge-paths to reduced paths which implies that $p_*$ is injective. 

\begin{lem}\label{lem:stallings} Let $\Gamma$ be a finite connected $A$-graph and
  $x_0$ be a vertex of $\Gamma$ such that
  $p_*:\pi_1(\Gamma,x_0)\to\pi_1(R_A,v_0)$ is surjective. Then there
  exists a finite sequence of $A$-graphs 
\[\Gamma=\Gamma_0,\Gamma_1,\ldots ,\Gamma_n=R_A
\] 
such that $\Gamma_{i}$  can be obtained from $\Gamma_{i-1}$ by a fold for $1\le i\le n$.
\end{lem}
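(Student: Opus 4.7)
The plan is to keep applying folds to $\Gamma$ until no more folds are possible, and then check that the folded graph one ends up with must already be $R_A$. This uses the two standard facts recalled in the paragraph preceding the lemma: folds do not alter the image of $p_*$, and a folded $A$-graph is precisely one where $p$ is an immersion (so reduced edge-paths map to reduced words).

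First I would produce a folding sequence $\Gamma = \Gamma_0, \Gamma_1, \dots, \Gamma_N$ in which each $\Gamma_{i+1}$ is obtained from $\Gamma_i$ by a single fold and $\Gamma_N$ admits no further fold. Termination is immediate because a fold strictly decreases the number of edges of a finite $A$-graph, and the number of edges is a nonnegative integer. Invariance of $p_*$ under folds then gives $p_*\pi_1(\Gamma_N,x_0) = p_*\pi_1(\Gamma,x_0) = F$, so the surjectivity hypothesis is preserved at the end.

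Next I would argue that $\Gamma_N = R_A$. Fix $i \in \{1,\dots,k\}$. By surjectivity of $p_*$, the element $a_i \in F$ is represented by some loop at $x_0$ in $\Gamma_N$; take a reduced such loop $\gamma_i$. Because $\Gamma_N$ is folded, $p$ sends $\gamma_i$ to a reduced edge-path in $R_A$, hence to a freely reduced word in $F(A)$, which must literally equal $a_i$. Thus $\gamma_i$ is a single loop-edge at $x_0$ labeled $a_i$. Running over $i$ gives $k$ loop-edges at $x_0$, one for each label; foldedness at $x_0$ now forbids any further edge (incoming or outgoing) at $x_0$, so every edge of $\Gamma_N$ is incident only to $x_0$, and connectedness forces $x_0$ to be the only vertex. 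Therefore $\Gamma_N = R_A$, and the sequence $\Gamma_0,\dots,\Gamma_N$ is the required one.

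The only place requiring any care is the step that promotes ``loop at $x_0$ labeled by a word representing $a_i$'' to ``single loop-edge labeled $a_i$''; this is exactly where the folded hypothesis is used, via the observation that immersions send reduced paths to reduced paths. Termination, preservation of $p_*$, and the final connectedness argument are all routine; I do not foresee a genuine obstacle.
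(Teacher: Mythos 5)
Your proof is correct and follows essentially the same route as the paper: terminate a maximal folding sequence by the edge-count argument, use surjectivity of $p_*$ together with foldedness to force a loop-edge at the basepoint for each $a_i$, and then conclude the folded graph is $R_A$. The only difference is that you spell out the final step (no extra edges at $x_0$, connectedness forcing a single vertex), which the paper leaves as ``now easily implies.''
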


\begin{proof} 
Note that each fold reduces the number of edges in a finite $A$-graph.
Thus any sequence of folds, that starts with the graph $\Gamma$, must
terminate. Let $\Gamma_n$ be obtained from $\Gamma$ by a maximal
sequence of folds, so that $\Gamma_n$ is folded. We claim that
$\Gamma_n=R_A$.

As $\Gamma_n$ is folded, it follows that $p_*$ is injective for any
choice of base point in $\Gamma_n$. Let $x_0^n$ the vertex of $\Gamma_n$ be the image of $x_0$ under the folds. By assumption $p_*:\pi_1(\Gamma_n,x_0^n)\to \pi_1(R_A,v_0)$ is surjective. In particular, for each $a_i\in A$, there must exist a loop edge at $x_0^n$ labeled with $a_i$, as the homotopy class represented by $e_i$ must be the image of some reduced path in $\Gamma_n$. The assumption that $\Gamma_n$ is folded now easily implies that $\Gamma_n=R_A$.
\end{proof}

\begin{defn}
We say that a finite connected graph $\Gamma$ is a \emph{core graph},
if it does not have any degree-one vertices. For a finite connected
graph $\Gamma$ with a nontrivial fundamental group let $Core(\Gamma)$
be the unique smallest subgraph of $\Gamma$ whose inclusion into
$\Gamma$ is a homotopy equivalence.
\end{defn}

Thus $Core(\Gamma)$ is obtained
from $\Gamma$ by cutting off a (possibly empty) collection of
tree-branches. Note that $Core(\Gamma)$ is a core graph.
We need the following simple observation about $A$-graphs that fold
onto $R_A$ with a single fold. 

Let $\Gamma$ be an $A$-graph. We say that an $A$-graph $\Gamma'$ is
\emph{$\Gamma$ with a spike-edge}, if $\Gamma'$ is obtained from $\Gamma$ by adding an edge $e$ such that the initial vertex lies in $\Gamma$ and the terminal vertex does not.

\begin{lem}\label{trivial} Let $\Gamma$ be a finite core $A$-graph of
  rank $k$.
\begin{enumerate}
\item If $\Gamma$  folds onto $R_A$ with a single fold then there exists
  a reduced word of length 2 that cannot be read as the label of an
  edge-path in $\Gamma$.
\item If $\Gamma$  folds with a single fold onto $R_A$ with a spike-edge, then there exists
  a reduced word of length 4 that cannot be read as the label of an
  edge-path in $\Gamma$.

\end{enumerate}
\end{lem}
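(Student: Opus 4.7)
The plan is to combine an Euler-characteristic computation with a rigid structural analysis of what a single Stallings fold can produce. A fold identifying two parallel same-direction edges decreases $\pi_{1}$-rank by one, while a fold identifying two edges that share exactly one endpoint decreases both $V$ and $E$ by one and preserves rank. Since $\Gamma$ and the respective target graph both have rank $k$, the fold must be of the second type, which forces $V(\Gamma)=2$ and $E(\Gamma)=k+1$ in part~(1) and $V(\Gamma)=3$ and $E(\Gamma)=k+2$ in part~(2).

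For part~(1) I enumerate the possible configurations of the fold pair $e,e'$ (both labeled some $a_s$). Up to the obvious symmetries the core hypothesis rules out every configuration except the one in which $e$ is a loop at a vertex $p$ and $e'$ runs from $p$ to the other vertex $q$. The remaining $k-1$ edges carry distinct labels $a_i$ with $i\neq s$ and have endpoints in $\{p,q\}$. Since $d(q)\ge 2$, at least one of them, say $f_j$, has an endpoint at $q$. The key observation is that every oriented $a_s^{-1}$-traversal in $\Gamma$ comes from reversing $e$ or $e'$ and hence ends at $p$, so a reduced word $a_s^{-1}c$ is readable in $\Gamma$ if and only if some oriented edge outgoing from $p$ has label $c$. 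A short case check on the three possible positions of $f_j$ (a loop at $q$, or one of the two directions between $p$ and $q$) shows that at least one of $a_j, a_j^{-1}$ fails to label such an edge at $p$, yielding an unreadable word $a_s^{-1}a_j^{\pm 1}$ of length two.

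For part~(2) the analogous but lengthier fold enumeration narrows $\Gamma$ down to the following structure, unique up to symmetry: vertices $u,u',u''$, fold pair $e\colon u\to u'$ and $e'\colon u\to u''$ both labeled $a_s$, and $k$ remaining edges $f_1,\dots,f_k$ with distinct labels $a_1,\dots,a_k$ whose endpoints all lie in $\{u',u''\}$. One checks that every other possibility either forces a degree-one vertex (contradicting core) or would require a loop at the spike vertex of the target graph, which does not exist. In particular the only oriented edges outgoing from $u$ have label $a_s$, so any reduced edge-path visits $u$ only at its endpoints: entering $u$ would have to read $a_s^{-1}$ and leaving would have to read $a_s$, and these cancel.

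The proof then splits on the position of the remaining edge $f_s$ labeled $a_s$. If $f_s$ is not a loop, say $f_s\colon u'\to u''$, then every $a_s$-traversal of $\Gamma$ terminates at a vertex with no outgoing $a_s$-edge, and checking the three possible starting vertices shows that $a_s^3$ is already unreadable, so $a_s^4$ is the desired length-four word. If $f_s$ is a loop, say at $u'$, then no $a_s$-edge touches $u''$, and the core hypothesis $d(u'')\ge 2$ forces some $f_j$ with $j\neq s$ to have an endpoint at $u''$; the part~(1) argument then applies verbatim to produce an unreadable length-two word $a_s^{-1}a_j^{\pm 1}$, which extends to length four by appending any reduced suffix. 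The main obstacle I expect is precisely this fold enumeration in part~(2): one must carefully verify that the degenerate subcases (especially those where a fold edge is itself a loop) are ruled out by the core hypothesis, since it is this rigidity that makes the subsequent readability analysis short.
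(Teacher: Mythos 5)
Your proof is correct and follows essentially the same route as the paper's: rank/Euler-characteristic bookkeeping pins down the fold configuration (two vertices in (1); three vertices in (2), with the fold vertex $u$ carrying only the two $a_s$-edges, so reduced paths meet it only at their endpoints), after which a short unreadable word is exhibited by inspecting where the remaining labelled edges sit. The only slip is in the loop-$f_s$ subcase of (2): the fold edge $e'\colon u\to u''$ is an $a_s$-edge touching $u''$, so the claim that no $a_s$-edge touches $u''$ is literally false and the part-(1) argument does not apply verbatim; however, since you have already shown that reduced paths visit $u$ only at their endpoints, any reading of $a_s^{-1}$ that continues with a further letter must use the loop at $u'$, and your conclusion that some $a_s^{-1}a_j^{\pm 1}$ is unreadable (hence extends to an unreadable reduced word of length $4$) stands.
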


\begin{proof}

(1) Note that the fold must identify a loop edge with a non-loop edge
since identifying two loop edges decreases the rank and identifying
two non-loop edges yields a graph with a non-loop edge.  Thus
w.l.o.g. we can assume that $\Gamma$ has two distinct vertices $x$ and
$y$ and the fold identifies a loop edge at $x$ and an edge from $x$ to
$y$ both with label $a_1$. The graph $\Gamma$ has $k-1$ more edges that are labeled by $a_2,\ldots ,a_k$.

	\begin{figure}[htb]
		\input{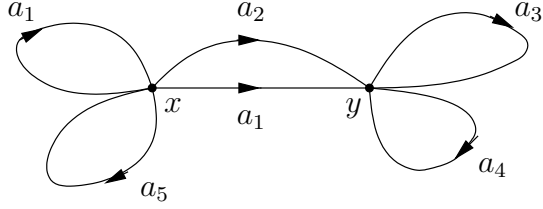}
		\caption{A core graph that folds onto $R_A$ with a single fold}
		\label{fold1}
	\end{figure}

As we assume $\Gamma$ to be a core graph, there must either exist a loop edge at $y$ or a second edge from $x$ and $y$. W.l.o.g. we can assume that this edge is labeled by $a_2$, see Figure~\ref{fold1}. In either case, we cannot read the word $a_2a_1$ in $\Gamma$ as the path is at the vertex $y$ after reading $a_2$; this proves the assertion.
\smallskip

(2) In this case the fold must identify two non-loop edge with distinct endpoints, i.e. we must be in the situation of Figure~\ref{fold2}. If a reduced word is read by a path in $\Gamma$ then the vertex at which the fold is based can only occur as the initial and the terminal vertex of this path. As there is clearly a word of length 2 that cannot be read in the graph spanned by the remaining vertices (the word is $a_1a_1$ in the example), the claim follows.
\end{proof}
	\begin{figure}[htb]
		\input{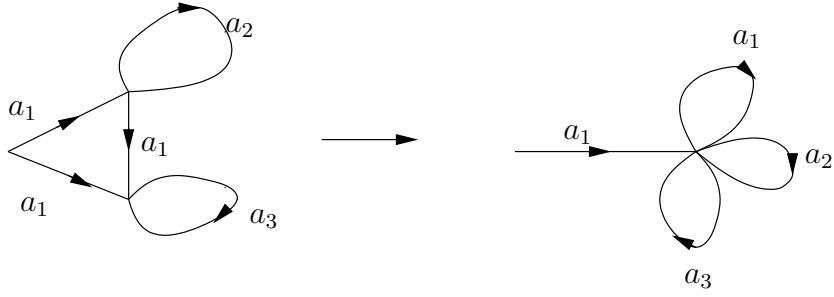}
		\caption{A core graph that folds onto $R_A$ with a spike-edge}
		\label{fold2}
	\end{figure}

We are now able to prove a weak version of our main theorem. This weak
version is an easy consequence of Stalling folds and the above
observations. However, one of the key ideas used in the proof of the
main theorem already shows up in the proof of the weak version.

\begin{thm}\label{specialcase} Let $k\ge m\ge 2$ and let
  $A=\{a_1,\dots, a_k\}$ and $B=\{b_1,\dots, b_m\}$ be free bases of
  $F(A)$ and $F(B)$. Then there exist exponentially generic subsets $V\subseteq
  \mathcal C_{m,A}$ and $U\subseteq \mathcal C_{k,B}$ with the following properties.

Let $(u_1(\bar b),\dots,u_k(\bar b))\in U$ and $(v_1(\bar a), \dots, v_m(\bar a))\in V$
be arbitrary and let $G$ be given by the presentation
\[
G=\langle a_1,\ldots,a_k,b_1,\ldots b_m\,|\,a_i=u_i(\bar b),
b_j=v_j(\bar a)\hbox{ for }1\le i\le k, 1\le j\le m\rangle.\tag{!}
\]

Then the $k$-tuple
 $(a_1,\ldots, a_k)$ is not Nielsen equivalent in $G$ to a $k$-tuple of form $(b_1,*,\ldots,*)$.
\end{thm}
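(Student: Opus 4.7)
The plan is to eliminate the $b$-generators, translate a putative Nielsen equivalence into a basis of $F(A)$ one of whose entries must represent $b_1$, control that entry via small cancellation, and then derive a contradiction using Stallings folds. Substituting $b_j = v_j(\bar a)$ in the relators $a_i = u_i(\bar b)$ rewrites $G$ as $G = \langle a_1,\ldots,a_k \mid r_1,\ldots,r_k \rangle$, where $r_i$ is the cyclic reduction of $u_i(v_1(\bar a),\ldots,v_m(\bar a))\,a_i^{-1}$. Using Proposition~\ref{prop:gen}, I pass to exponentially generic $U \subseteq \mathcal C_{k,B}$ and $V \subseteq \mathcal C_{m,A}$ ensuring: (a) the symmetrized closure $R$ of $\{r_1,\dots,r_k\}$ is $C'(\lambda)$ for $\lambda = 1/100$; (b) each $|r_i|$ is so large that $|v_1| < \lambda|r_i|$, so that $v_1(\bar a)$ is itself $\lambda$-cyclically reduced and cyclically Dehn-reduced relative to $R$; and (c) every subword of length $\geq \lambda|v_1|$ of $v_1$ and every subword of length $\geq \lambda|r|$ of any $r \in R$ contains every reduced word of length $\leq 4$ over $A$ as a subword.

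Assume for contradiction that $(a_1,\dots,a_k)$ is Nielsen equivalent in $G$ to $(b_1, g_2,\dots,g_k)$. Using the $Aut(F_k)$-orbit description of Nielsen equivalence and the identification $F_k \cong F(A)$, there is a basis $(W_1,\dots,W_k)$ of $F(A)$ with $W_1 =_G b_1$. If $W_1$ is not $\lambda$-reduced then by definition it contains a subword of some $r \in R$ of length $\geq (1-3\lambda)|r| \geq \lambda|r|$, and (c) immediately supplies every reduced word of length $\leq 4$ as a subword of $W_1$. Otherwise $W_1$ is $\lambda$-reduced; I apply Lemma~\ref{lem:sc-conj} to $b_1$ to obtain a canonical form $b_1 =_G vuv^{-1}$. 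Since $v_1(\bar a)$ is itself a $\lambda$-reduced representative of $b_1$ but, by (b), no subword of $v_1$ has length $\geq \lambda|r|$ for any $r \in R$, the dichotomy in Lemma~\ref{lem:sc-conj}(5) forces $v_1(\bar a) = vuv^{-1}$ in $F(A)$, and cyclic reducedness of $v_1$ gives $v = 1$ and $u = v_1(\bar a)$. A second application of Lemma~\ref{lem:sc-conj}(5), now to $W_1$, yields either $W_1 = v_1(\bar a)$ in $F(A)$ or $W_1$ contains a subword of some $r \in R$ of length $\geq \lambda|r|$; in either subcase (c) produces every reduced word of length $\leq 4$ as a subword of $W_1$.

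Form the $A$-graph $S_T$ for $T = (W_1,\dots,W_k)$, a wedge of $k$ circles labeled by the $W_i$. Since $\{W_1,\dots,W_k\}$ is a basis of $F(A)$, Lemma~\ref{lem:stallings} gives a sequence of folds $S_T = \Gamma_0 \to \cdots \to \Gamma_N = R_A$. Folds are label-preserving quotient maps, so readability of words as edge-path labels is preserved from $\Gamma_0$ to each $\Gamma_j$; in particular every reduced word of length $\leq 4$ is readable in every $\Gamma_j$. A short analysis of the end of the fold sequence (ranks are preserved throughout, and $\Gamma_{N-1}$ has $k+1$ edges on $2$ vertices) produces a core graph $\Gamma_*$ of rank $k$ in the sequence that folds in a single step onto either $R_A$ or $R_A$-plus-a-spike-edge; Lemma~\ref{trivial}, part (1) or (2) respectively, then exhibits a reduced word of length at most $4$ that is not readable in $\Gamma_*$, contradicting the readability statement above. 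The hardest step is the small-cancellation work in the second paragraph, where one must extract the dichotomy ``either $W_1 = v_1$ or $W_1$ contains a long relator subword'' from the canonical-form machinery of Lemma~\ref{lem:sc-conj}; once this is in hand, the Stallings-fold finish is elementary.
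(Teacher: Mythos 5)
There are two genuine gaps. First, your reduction to the one-letter presentation $G=\langle a_1,\dots,a_k\mid u_i(v_1(\bar a),\dots,v_m(\bar a))a_i^{-1}\rangle$ rests on the claim that the symmetrized closure of these \emph{composed} relators is generically $C'(1/100)$ and has the ``every long subword contains all short words'' property, and you cite Proposition~\ref{prop:gen} for this. That proposition only says that a random tuple of cyclically reduced words has these properties; the words $u_i(v_1(\bar a),\dots,v_m(\bar a))$ are not random words of their length (they form an exponentially negligible set of words of length about $|u_i|\,|v_1|$), so genericity of $U$ and $V$ does not transfer to them without a substitution argument in the style of Arzhantseva--Ol'shanskii/\cite{KS} (controlling cancellation at the $v_j$-block junctions and translating pieces of the composed relators into pieces of the $u_i$). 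The paper sidesteps this entirely by keeping the two-alphabet presentation $(!)$, whose relator set is generically $C'(1/100)$ directly because the $u_i$ and $v_j$ themselves are generic; your Lemma~\ref{lem:sc-conj}(5) dichotomy would in fact run more cleanly there (the canonical form of $b_1$ is $b_1$ itself, so a $\lambda$-reduced $W_1\in F(A)$ is impossible and $W_1$ must contain a long subword of some $b_jv_j^{-1}$), with no need to eliminate the $b$'s at all.

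Second, the fold endgame is not justified as stated. Lemma~\ref{trivial} requires a \emph{core} graph, and nothing in your setup guarantees that the graphs $\Gamma_j$ near the end of the fold sequence are core graphs or avoid containing $R_A$: since you did not arrange $W_1$ to be cyclically reduced, the image of the basepoint can acquire a hanging path, and then $\Gamma_{N-1}$ can be $R_A$ with a spike-edge, $\Gamma_{N-2}$ can be $R_A$ with a hanging path of length $2$, and so on --- all of these contain $R_A$, every word is readable in them, and no graph in the sequence need be a core graph of rank $k$ folding in one step onto $R_A$ or $R_A$ with a spike. Passing to $Core(\Gamma_j)$ does not repair this for free, because your readability statement (``$W_1$ is readable, hence all length-$4$ words are readable'') is proved for $\Gamma_j$, not for its core. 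The paper's fix is exactly the step you skipped: conjugate the tuple in $F(A)$ so that its first entry is cyclically reduced (it is then only \emph{conjugate} to $b_1$ in $G$, which is why the paper uses the conjugacy criterion Corollary~\ref{cor:conj-sc}(3) instead of an equality criterion), observe that then every $\Gamma_j$ is a core graph of rank $k$, and conclude that $\Gamma_{n-1}$ has two vertices and $k+1$ edges, contains no copy of $R_A$, and contradicts Lemma~\ref{trivial}(1). With that conjugation, plus the genericity work done in the presentation $(!)$ rather than the composed one, your argument becomes essentially the paper's proof.
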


\begin{proof}

We can choose exponentially generic sets $V\subseteq
  \mathcal C_{m,A}$ and $U\subseteq \mathcal C_{k,B}$ so that the following hold:
\begin{enumerate}
\item For every $(u_1(\bar b),\dots,u_k(\bar b))\in U$ and $(v_1(\bar
  a), \dots, v_m(\bar a))\in V$ the presentation (!) satisfies the
  $C'(1/100)$ small cancellation condition, after symmetrization.
\item For every $(u_1(\bar b),\dots,u_k(\bar b))\in U$ and $(v_1(\bar
  a), \dots, v_m(\bar a))\in V$ we have $|u_i|_B\ge 10^{10}$ and
  $|v_j|_A\ge 10^{10}$.
\item For every $(v_1(\bar
  a), \dots, v_m(\bar a))\in V$ and every $j=1,\dots, m$, every subword
  of $v_j$ of length $\ge |v_j|/100$ contains all freely reduced words
  of length two in $F(A)$ as subwords.
\end{enumerate}

We now argue by contradiction. Assume that for some $G$ as in
Theorem~\ref{specialcase}, the $k$-tuple $(a_1,\ldots, a_k)$ is
Nielsen equivalent in $G$ to a $k$-tuple $(b_1,*,\ldots,*)$. This
implies that $(a_1,\ldots,a_k)$ is
Nielsen equivalent in $F(A)$ to a tuple $T=(w_1(\bar a),\ldots,w_k(\bar a))$ such that
$w_1(\bar a)=b_1$ in $G$. Note that we might have $w_1(\bar a)\neq
v_1(\bar a)$ in $F(A)$. We do not distinguish between the $w_i(\bar
a)$ and freely reduced words in $F(A)$ representing them. After a
possible conjugation of $T$ in $F(A)$, we may assume that $w_1$ is
cyclically reduced in $F(A)$ and that $w_1$ is conjugate to $b_1$ in $G$.

\smallskip By Corollary~\ref{cor:conj-sc}, $w_1(\bar a)$ must contain
at least one fourth of a cyclic permutation of one of the defining
relations $\left(v_j(\bar a)b_j^{-1}\right)^{\pm 1}$, since $w_1(\bar a)$ is
cyclically reduced and is conjugate to $b_1$ in $G$.
This implies in particular that $w_1$ contains every freely reduced
 word of length two in $F(A)$ as a subword. 
\smallskip Let now $S_T$ be the wedge as above and choose a sequence
$$S_T=\Gamma_0,\Gamma_1,\ldots,\Gamma_n=R_A$$ such that each $\Gamma_i$ can
be obtained from $\Gamma_{i-1}$ by a fold. Note that each $\Gamma_i$
is a core graph. Indeed, $\Gamma_i$ is obtained by a sequence of folds
from $\Gamma_0=S_T$, which was a wedge of loops labeled by freely
reduced words, and hence no vertex, other than possibly the
base-vertex, has degree $1$ in $\Gamma_i$. Moreover, the cyclically
reduced word $w_1$ can be read $\Gamma_i$ along a closed path based at
the base-vertex, which implies that the base-vertex has degree $>1$ in
$\Gamma_i$. Thus, indeed, $\Gamma_i$ is a core graph. 
 Hence $\Gamma_{n-1}$ is a core graph that folds onto $R_A$ with a single fold and $w_1$ can be read by a closed path in $\Gamma_i$. This contradicts Lemma~\ref{trivial}.
\end{proof}

\begin{rem} The proof of Theorem~\ref{specialcase} recovers the well-known fact that for any cyclically reduced primitive element $g$ in $F(a_1,\ldots,a_k)$ there exists some reduced word $w$ of length two such that neither $w$ nor $w^{-1}$ occur as a subword of $g$.
\end{rem}

\smallskip We conclude this section with a generalization of Lemma~\ref{trivial} that will be needed in the proof of the main theorem.

\begin{lem}\label{not-so-trivial} Let $\Gamma$ be a finite connected
  $A$-graph of rank $k+1$ that folds onto $R_A$ but does not contain
  an isomorphic copy of $R_A$ as a subgraph. Then there exists a reduced word that is not a label of a path in $\Gamma$.
\end{lem}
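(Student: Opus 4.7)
The plan is to argue by contradiction: I would assume that every reduced word over $A$ is the label of some path in $\Gamma$, and derive the existence of an unreadable reduced word, yielding a contradiction.

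First I would invoke Lemma~\ref{lem:stallings} to fix a sequence of folds $\Gamma=\Gamma_0\to\Gamma_1\to\cdots\to\Gamma_n=R_A$ and distinguish two kinds of fold: a \emph{Type 1} fold identifies two edges with a common initial vertex and common label but distinct terminal vertices (preserves rank, decreases vertex count by one), while a \emph{Type 2} fold identifies two parallel edges with common endpoints and common label (decreases rank by one). Since $\mathrm{rank}(\Gamma)=k+1$ and $\mathrm{rank}(R_A)=k$, exactly one fold in the sequence is of Type 2 and all others are Type 1. I would then record two monotonicity observations. \emph{Readability:} any fold is a label-preserving graph morphism sending paths to paths, so if every reduced word is readable in $\Gamma_i$ the same holds in $\Gamma_{i+1}$; equivalently, any reduced word unreadable in some $\Gamma_j$ is also unreadable in $\Gamma$. \emph{Containment of $R_A$ as a subgraph:} Type 1 folds merge two vertices and their loop-sets, and Type 2 folds identify parallel edges without affecting the labels of loops at any vertex; hence neither fold type ever destroys an $R_A$-subgraph, and Type 2 folds cannot create one.

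Because $\Gamma_0=\Gamma$ does not contain $R_A$ while $\Gamma_n=R_A$ does, there is a smallest index $j\ge 1$ with $\Gamma_j\supseteq R_A$; by the $R_A$-monotonicity the fold $\phi_j\colon\Gamma_{j-1}\to\Gamma_j$ must be of Type 1, identifying two edges $e_1,e_2$ with common initial vertex $u$, common label $a$, and distinct terminal vertices $v_1,v_2$ that are merged into a vertex $v\in V(\Gamma_j)$ carrying loops for every label in $\{a_1,\dots,a_k\}$. Since neither $v_1$ nor $v_2$ carries loops for all labels in $\Gamma_{j-1}$, I can fix a label $a_\ell$ missing as a loop at $v_1$; the corresponding $a_\ell$-loop at $v$ must descend to either an $a_\ell$-loop at $v_2$, an $a_\ell$-edge joining $v_1$ and $v_2$ in $\Gamma_{j-1}$, or (only when $a_\ell=a$) one of the fold edges with $u\in\{v_1,v_2\}$, a configuration which forces $v_2$ to carry an $a_\ell$-loop and so reduces to the first case. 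In each sub-case I would construct a short reduced word that cannot be read in $\Gamma_{j-1}$: combining the letter $a$ (which funnels any reading through one of $e_1,e_2$ into $v_1$ or $v_2$) with a continuation that is forced to fail when one is at $v_1$ because of the missing $a_\ell$-loop, while ruling out alternative readings starting at other vertices by examining the local structure. By readability-monotonicity this word is then unreadable in $\Gamma$ itself, yielding the desired contradiction.

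The main obstacle is the final step: producing an explicit unreadable reduced word in each sub-case requires a detailed combinatorial case analysis of the local structure of $\Gamma_{j-1}$ around $u,v_1,v_2$, and a careful verification that no alternative starting vertex in $\Gamma_{j-1}$ admits the intended word. Everything else in the argument reduces to routine Stallings-fold bookkeeping and the two monotonicity observations above.
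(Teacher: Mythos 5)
Your fold-sequence bookkeeping is fine: exactly one rank-decreasing fold occurs, readability of words is monotone along folds, a rank-decreasing fold cannot create a copy of $R_A$, so the first $\Gamma_j$ containing $R_A$ arises from a rank-preserving fold and the vertex carrying all $k$ loops must be the merged vertex. But the argument stops exactly where the content of the lemma begins, and the plan you sketch for the final step cannot work as stated. You propose to exhibit a \emph{short} reduced word unreadable in $\Gamma_{j-1}$ by a \emph{local} analysis around $u,v_1,v_2$. In general no such word exists, because readability is a global property of the graph: a graph satisfying all the hypotheses can contain, far from the fold site, a long embedded arc whose label contains every reduced word up to any prescribed length as a subword. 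Concretely, let $\Psi$ have vertices $x,y$, an $a_1$-loop at $x$, an $a_1$-edge from $x$ to $y$, and $a_2$-, \dots, $a_k$-loops at $y$, and attach to $\Psi$ a long subdivided segment whose label contains every reduced word of length $1000$ as a subword. The resulting graph has rank $k+1$, folds onto $R_A$, contains no copy of $R_A$ (so it can occur as your $\Gamma_{j-1}$, e.g.\ when it is $\Gamma$ itself and the chosen sequence performs the $a_1$-fold first), yet every reduced word of length at most $1000$ is readable along the attached segment. Hence an unreadable word must in general have length comparable to the size of the graph, and certifying that it is unreadable requires controlling the whole graph, not a neighbourhood of the fold; the ``careful verification that no alternative starting vertex admits the intended word'' that you defer is not a routine check but is impossible for any word of bounded length.

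This is precisely why the paper's proof takes a different route before any case analysis: it first reduces to $\Gamma=Core(\Gamma)$ (if $w$ is unreadable in the core, then $w^M$ with $M=2\,\mathrm{diam}(\Gamma)+1$ is unreadable in $\Gamma$), then uses the structural description of the core (Lemma~\ref{coredescription}) as the critical graph $\Psi$ with a single segment or lollipop attached, and only then builds unreadable words such as $a_j^{\eta}a_1^{-l}a_k^{\varepsilon}a_l^{\xi}$ in Case 1A, whose length depends on the attached arc and whose unreadability is checked against the \emph{global} shape of the core, also using the standing assumption that every applicable fold creates a copy of $R_A$ (your $\Gamma_{j-1}$ only has this property for the one fold in your chosen sequence, which is weaker than what the paper's case analysis uses). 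So what you have proved is the routine reduction; the construction you leave open is the entire lemma, and in the framework you set up (no passage to the core, bounded-length word, local verification) it fails.
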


\begin{proof} We may assume 
  that any applicable fold in $\Gamma$ produces a subgraph that is isomorphic to
  $R_A$ since we could otherwise just apply a fold that does not produce
  such a subgraph. As any word that was readable before the fold is
  also readable after the fold, it follows that the assertion for the
  new graph implies the assertion for the original one.

Since $\Gamma$ is obtained from $Core(\Gamma)$ by attaching a finite
  number of finite disjoint trees, any reduced word readable in
  $\Gamma$ has the form $z'zz''$ where $z$ is readable in
  $Core(\Gamma)$ and where $|z'|+|z''|\le {\rm diam}(\Gamma)$. Thus,
  if $w$ is a nontrivial freely reduced word such that $w^M$ is readable
  in $\Gamma$ with $M:=2\, {\rm diam}(\Gamma)+1$, then $w$ is readable in
  $Core(\Gamma)$. Hence any reduced word that is not readable
  in $Core(\Gamma)$ has a power that is not readable in $\Gamma$.
  We can therefore assume that $\Gamma=Core(\Gamma)$.

Thus $\Gamma$ contains a graph $\Psi$ which either folds
onto $R_A$ or onto $R_A$  with a spike-edge. Furthermore
$\Gamma$ is obtained from $\Psi$ by attaching a subdivided segment or a subdivided lollipop
as described in Lemma~\ref{coredescription}~(1).


There are two cases: Case 1, where $\Psi$ folds onto $R_A$, and Case
2, where $\Psi$ folds onto $R_A$ with a spike-edge. We will
give all details in Case 1 and leave Case 2 to the reader. No new
arguments are needed to complete the proof.

\smallskip \noindent {\bf Case 1: } the graph $\Psi$ folds onto $R_A$
with a single fold. Recall (see proof of Lemma~\ref{trivial}) that
w.l.o.g. $\Psi$ has two vertices $x$ and $y$, a loop edge at $x$ with
label $a_1$, an edge with label $a_1$ from $x$ to $y$ and $k-1$ more
edges with labels $a_2,\ldots ,a_k$, see Figure~\ref{fold1}. $\Gamma$
is obtained from $\Psi$ by attaching an segment, respectively a lollipop,  consisting of, say, $l$ edges.

\smallskip
In the following we will denote the attached segment, respectively the
attached lollipop, by $s$. By the label of $s$ we mean the label of
the segment if $s$ is an segment and otherwise the label of the path
that starts and ends at the vertex where $s$ is attached to $\Psi$ and
walks once around the lollipop. We can assume that the label of $s$ is
freely reduced  as we are only looking at freely reduced words that
can be read in $\Gamma$. We distinguish three subcases depending on
the size of $l$, the number of edges of $s$.

\smallskip
\noindent {\bf Case 1A: } Suppose first that $l\ge 3$. Then we cannot fold an edge of $s$  with an edge of $\Psi$ as such a fold could not produce a copy of $R_A$ contradicting our assumption. Note next that this assumption implies that the label of $s$ cannot be a power of $a_1$ as we could otherwise apply a fold to $\Gamma$ without producing a subgraph isomorphic to $R_A$. The same argument shows, that if $s$ starts (ends) at $x$ then the label of $s$ does not start (end) with the letter $a_1^{\pm 1}$.

Then any path that reads $a_1^{-l}$ must either end  at the vertex $x$ or $l$ must be a lollipop with non-degenerate stick and the path travels exclusively on the candy of the lollipop which itself is labeled by a power of $a_1$, see Figure~\ref{fig:case1a}.

	\begin{figure}[htb]
		\input{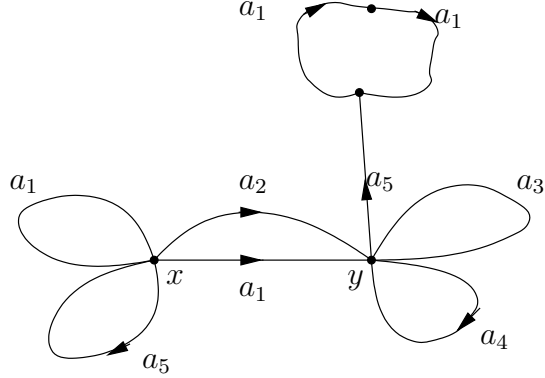}
		\caption{The label of the lollipop $s$ is $a_5a_1a_1a_5^{-1}$. The word $a_1^l$ can be read in $\Psi$ and also on the lollipop.}
		\label{fig:case1a}
	\end{figure}

If we now pick $a_j^{\eta}\neq a_1^{\pm 1}$ such that $a_j^{\eta}$ is not the last edge on the stick of  lollipop if the lollipop has a non-degenerate stick (choose $a_j^{\eta}\neq a_5$ in the example drawn in Figure~\ref{fig:case1a}), then any path that reads $a_j^{\eta}a_1^{-l}$ must end at vertex $x$.

Now pick a letter $a_k^{\varepsilon}$ such there is no edge of $\Psi$ with initial vertex $x$ and label $a_k^{\varepsilon}$. It follows that a path reading the word $a_j^{\eta}a_1^{-l}a_k^{\varepsilon}$ must terminate with the initial edge of $s$ or $s^{-1}$.
It follows that the word $a_j^\eta a_1^{-l}a_k^{\varepsilon}a_l^{\xi}$ cannot be read in $\Gamma$ if $a_l^{\xi}$ is such that $a_k^{\varepsilon}a_l^{\xi}$ is not the initial word of the label of $s$ (if $s$ starts at $x$) and not the initial word of the label of $s^{-1}$ if $s$ ends at $x$. This proves the claim as it implies that $a_j^{\eta}a_1^{-l}a_k^{\varepsilon}a_l^\xi$ is not readable in $\Gamma$.

\medskip \noindent {\bf Case 1B:} Suppose now that $l=2$. If $s$ does not fold onto $\Psi$ then we argue as in the case $l\ge 3$. Thus we can assume that one of the two edges of $s$ folds onto an edge of $\Psi$. As such a fold does not identify $x$ and $y$ it follows that this fold must produce a new loop edge attached to either $x$ or $y$ and that there must have been already a wedge of $k-1$ circles at $x$ or $y$. We distinguish three different configurations.

\smallskip Suppose first that $s$ is a segment  of length $2$ from $x$ to $y$. In this case an edge of $s$ must be folded onto an edge from $x$ to $y$ to produce a loop.

If the fold produces a copy of $R_A$ based at $y$ then $\Psi$ must have consisted of a single loop with label $a_1$ at $x$, a single edge with label $a_1$ from $x$ to $y$ and $k-1$ loop edges with labels $a_2,\ldots ,a_k$ at $y$. The fold must add a loop with label $a_1$ which implies that the segment $s$ from $x$ to $y$ has label $a_1a_1$. This means that the path $a_1^{-1}a_2$ is not readable in $\Gamma$ as no path is at the vertex $y$ after reading $a^{-1}$, see Figure~\ref{fig:case1b1}.

\begin{figure}[htb]
		\input{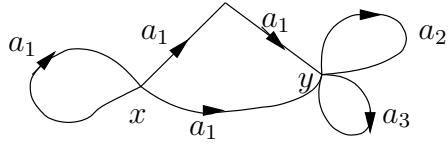}
		\caption{A copy of $R_A$ emerges at $y$}
		\label{fig:case1b1}
	\end{figure}

If the fold produces a copy of $R_A$ based at $x$ then $\Psi$ had
w.l.o.g. $k-1$ loop edges based at $x$ with labels $a_1,a_3,\ldots
,a_k$, an edge with label $a_1$ from $x$ to $y$ and either (a) a loop
edge with label $a_2$ based at $y$ or (b) an edge with label
$a_2^{\eta}$ from $x$ to $y$. The fold must produce a loop edge with
label $a_2$ at $x$ which implies that the segment $s$ from $x$ to $y$
must have label  $a_2^\eta a_1$ in case (a) or $a_2^{2\eta}$ or
$a_2^{\eta}a_1$ in case (b). In all of these cases it is clear that
$a_2^{2\eta}a_1^{-1}$ is not readable in $\Gamma$, see Figure~\ref{fig:case1b2}.

\begin{figure}[htb]
		\input{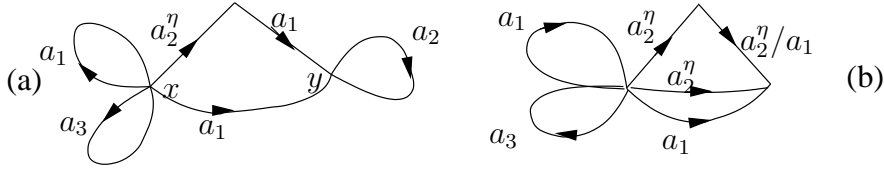}
		\caption{A copy of $R_A$ emerges at $x$}
		\label{fig:case1b2}
	\end{figure}

Suppose now that $s$ is a loop of length $2$. As the fold must produce a loop-edge at $x$ or $y$, it must fold an edge of $s$ onto a loop edge. The descriptions of $\Psi$ are as in the case before.

If a loop edge is added at $y$ then the added loop-edge must have
label $a_1$, and hence $s$ must have been a loop based at $y$ with label $a_i^{\pm 1}a_1^{\pm 1}$ with $i\ge 2$. In this case the word $a_1^{-2}a_2$ is not readable in $\Gamma$.

If a loop edge is added at $x$ then the added loop edge must have label $a_2$ which implies that $s$ must be a loop based at $x$ with label $a_i^{\pm 1}a_2^{\pm 1}$ with $i\neq 2$. It follows that $a_2^2a_1^{-1}$ is not readable in $\Gamma$.

\smallskip Suppose lastly that $s$ is a lollipop whose stick and loop both have a single edge. In this case we can argue precisely as in Case 1A.

\smallskip \noindent {\bf Case 1C: } Suppose now  that $l=1$. We distinguish the cases where $s$ is a loop edge and where it is an edge from $x$ to $y$.

Suppose first that $s$ is a loop edge. If we add a loop edge with
label $a_j$ to $x$ then there must still be a reduced word of the form $a_1^{-1}a_i^{\pm
  1}$ (with $i\ne 1$) that cannot be read in $\Gamma$.

Indeed, a path with label $a_1^{-1}a_i^{\pm 1}$ must be at the vertex
$x$ after reading $a_1^{-1}$. Since by assumption $\Gamma$ contains no
copy of $R_A$, there is some $ 2\le i\le k$ such that there is no loop
edge labelled by $a_i$ based at $x$. Moreover there are not two non-loop edges with labels $a_i$ and $a_i^{-1}$ originating at $x$. The claim follows.

The same argument works if we add a loop edge with label different from $a_1$ at $y$. If we add a loop edge with label $a_1$ to $y$ then inspecting the two possibilities reveals that there always exists a word of type $a_2^\eta a_j^\epsilon $ that is not readable as we assume that not all loop edges are present at~$y$.

Suppose lastly that $s$ is an edge from $x$ to $y$. If the edge is
labeled by $a_1^{\pm 1}$ then there is a word of type $a_2^\eta
a_j^\varepsilon$ that is not readable in $\Gamma$. Thus we can assume
that the edge $s$ is labeled by an element different from $a_1^{\pm 1}$. This implies that any path that starts with $a_1^{-1}$ is at the vertex $x$ after reading $a_1^{-1}$. Thus we can assume that any $a_i^{\pm 1}$ can be read by an edge emanating at $x$. For at least one $a_j^{\varepsilon}$ the corresponding edge leads to $y$. Hence the word $a_1^{-1}a_j^{\varepsilon}a_1$ cannot be read in $\Gamma$.
\end{proof}

\section{Crucial consequences of genericity}

\begin{defn}[Critical words]
Let $A=\{a_1,\dots, a_k\}$ where $k\ge 2$.
In the following we will call an $A$-graph \emph{critical} if it
folds with a single fold onto a the rose $R_A$ or on $R_A$ with a spike-edge. We further call a word in $F(A)$ {\em critical} if it can be
read in some critical $A$-graph.
\end{defn}
By the lemmas established in the previous section, critical words are
highly non-generic in $F(A)$.

For the remainder of the paper we fix $k\ge 2$, $m\ge 2$ and the
alphabets $A=\{a_1,\dots, a_k\}$, $B=\{b_1,\dots, b_m\}$.
In this section we will study elements in a group of type \[
G=\langle a_1,\ldots,a_k,b_1,\ldots b_m\,|\,a_i=u_i(\bar b),
b_j=v_j(\bar a)\hbox{ for }1\le i\le k, 1\le j\le m\rangle.\tag{!!}
\]  that can be written as products (in $G$) of the form 
\[
w_0h^{\varepsilon_1}w_1h^{\varepsilon_2}w_2\cdot\ldots\cdot
w_{k-1}h^{\varepsilon_k}w_k \tag{$\ast\ast$}
\]
 where the $w_i$ are critical words, $h$ is some fixed element of $G$ and $\varepsilon_{i}\in\{-1,1\}$ for all $i$. We will see that there exist exponentially generic subsets $V\subseteq
  \mathcal C_{m,A}$ and $U\subseteq \mathcal C_{k,B}$ such that for  $(u_1(\bar b),\dots,u_k(\bar b))\in U$ and $(v_1(\bar a), \dots, v_m(\bar a))\in V$ these products are well-behaved. The results we obtain are the main tools for the proof of the main theorem. 
We will denote by $R_\ast\subseteq F(A\cup B)$ the symmetrized closure of the set of
defining relations in (!!). 

\smallskip We will not specify at the outset an explicit list of conditions for
$V$ and $U$ that are exponentially generic and that will ensure that the subsequent results hold. Rather we will make the
several required choices for $V$ and $U$ along the way in the proof,
and each of $U$, $V$ will be constructed as the intersection of a
finite number of exponentially generic sets.

\smallskip However, at the start we do require $V$ and $U$ to satisfy the
following conditions. We choose $0\le \lambda\le 10^{-100}$ and
$0<\lambda_1\le 10^{-100}\lambda$.

\begin{cond}\label{cond:gen}
The following hold for $U$ and $V$:
\begin{enumerate}
\item There are exponentially generic subsets $\mathcal V\subseteq \mathcal C_A$ and $\mathcal U\subseteq \mathcal C_B$ such that
$U\subseteq \mathcal U^k$ and $V\subseteq \mathcal V^m$.
\item For every $(u_1(\bar b),\dots,u_k(\bar b))\in U$ and $(v_1(\bar
  a), \dots, v_m(\bar a))\in V$ the set $R_\ast$ satisfies the
  $C'(\lambda_1)$ small cancellation condition, after symmetrization.
\item For every $u\in \mathcal U$ and $v\in \mathcal V$ we have $|u|_B\ge
  10^{100000}/\lambda_1$ and $|v|_A\ge 10^{100000}/\lambda_1$.
\item For every $v\in \mathcal V$, every subword of $v$ of length $\ge
  \lambda_1(|v|+1)/1000$ contains all freely reduced words of length
  $1000$ in $F(A)$ as subwords.
\item For every $u\in \mathcal U$, every subword of $u$ of length $\ge
  \lambda_1(|u|+1)/1000$ contains all freely reduced words of length
  $1000$ in $F(B)$ as subwords.
\item If some defining relation $r\in R_\ast$ contains two distinct
  occurrences of a subword $y$ then $|y|\le \lambda_1|r|/1000$.
\end{enumerate}
\end{cond}

Proposition~\ref{prop:gen} implies that exponentially generic subsets
$V\subseteq \mathcal C_{m,A}$ and $U\subseteq \mathcal C_{k,B}$ with the above
properties exist.

\smallskip As there are only finitely many isomoprphism classes of
critical $A$-graphs, and for any such $A$-graph $\Psi$ there exists a
word in $F(A)$ that is not readable in $\Psi$, Lemma~\ref{trivial} implies that
we may also assume the following:

\begin{cond}\label{cond:gen1} 
If $y\in F(A)$ is a freely reduced word that is readable in
some critical $A$-graph such that $y$ is also a subword of some
defining relation $r\in R_\ast$
then $|y|\le \lambda_1|r|/1000$.
\end{cond}

\begin{defn}[Exceptional words]
We say that a word $h$ in $F(A)$ is \emph{exceptional}, if it does not
contain a subword of length $N/100$ that is also a subword of some $r\in R_\ast$.

We say that a reduced word $h$ in  $F(A\cup B)$ is a
\emph{non-exceptional} if $h$ is not exceptional, that is, if either $h$ contains some $b_i$ or $h\in
F(A)$ and $h$ contains a subword of length $N/100$
that is also a subword of some $r\in R_\ast$. 
\end{defn}

We will mostly be interested in products as in $(\ast\ast)$ above
where $w_i\in F(A)$ are critical and where $h\in F(A\cup B)$ is
non-exceptional. To understand the possible cancellations in such
products we need a series of lemmas.

\begin{lem}\label{cancel2}
Let $h$ a non-exceptional word in $F(A\cup B)$.

For any freely reduced critical word $t\in F(A)$  let $W^t$ be the
freely reduced form of the word $hth$. We write $W^t$ as a reduced
product $W^t=W_1^tW_2^tW_3^t$, where $W_1^t$, $W_2^t$ and $W_3^t$ are
the (possibly empty) portions of  $h$, $t$, $h$ accordingly that
survive the free reduction of the product $hth$.

\smallskip There exists a critical word $w\in F(A)$ such that for each critical reduced word $\bar w\neq w$ the following hold:

\begin{enumerate}
\item The word $W^{\bar w}$ does not contain a subword
  $y$ of a defining relator $r\in R_\ast$ such that $y$ has overlaps with both $W_1^{\bar w}$ and $W_3^{\bar w}$ in subwords of length at least $100\lambda_1N$;
\item If in the free reduction of $h \bar wh $ some letter from $B^{\pm
  1}$ cancels then $h =ub_i^{\varepsilon}r_1\hat g r_2b_i^{-\varepsilon}v^{-1}$ such that the following hold:
\begin{enumerate}
\item $u$ and $v$ are words in $F(A)$.
\item $r_1$ and $r_2$ are words in $F(B)$ and $|r_1|,|r_2|\ge 100\lambda_1N-1$.
\item $W^w=ub_i^{\varepsilon}r_1\hat g
  r_2b_i^{-\varepsilon}a_j^{\eta}b_i^{\varepsilon}r_1\hat g
  r_2b_i^{-\varepsilon}v^{-1}$, with $W_1^w=ub_i^{\varepsilon}r_1\hat g
  r_2b_i^{-\varepsilon}$, $W_2^w=a_j^\eta$, $W_3^w=b_i^{\varepsilon}r_1\hat g
  r_2b_i^{-\varepsilon}v^{-1}$ and the word
  $r_2b_i^{-\varepsilon}a_j^{\eta}b_i^{\varepsilon}r_1$ is a subword
  of a defining relator from $R_\ast$, so that condition (1) above fails for the word
  $w$. 
\item $W^{\bar w}=ub_i^{\varepsilon}r_1\hat g \bar r_2\bar r_1\hat g
  r_2b_i^{-\varepsilon}v^{-1}$ where for $i=1,2$ $\bar r_i$ is the
  remnant of $r_i$ and $|\bar r_i|\ge 90\lambda _1 N$; thus $W_1^{\bar
    w}=ub_i^{\varepsilon}r_1\hat g \bar r_2$, $W_2^{\bar w}=1$ and  $W_3^{\bar w}=\bar r_1\hat g
  r_2b_i^{-\varepsilon}v^{-1}$. Furthermore $\bar r_2\bar r_1$ is not a subword of a defining relator.

\end{enumerate}
\item In the free reduction of $h \bar wh $ no subword $y$ of one
  of the two occurrences of $h $ cancels such that $y$ is also a subword
  of some defining relator in $R_\ast$ and $|y|\ge 100\lambda_1N$.
\end{enumerate}
Moreover there is at most one critical word $\bar w$ such that (2) occurs.
\end{lem}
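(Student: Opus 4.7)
The plan is to analyze the free reduction of $h\bar wh$ for a critical word $\bar w\in F(A)$, split according to how deeply the $A$-cancellation at the two junctions penetrates. Write $h=uh_{\mathrm{mid}}v^{-1}$ where $u$ is the maximal $A$-prefix and $v^{-1}$ the maximal $A$-suffix of $h$. Since $\bar w\in F(A)$, cancellation at each junction is initially pure $A$-cancellation: either it stops inside $\bar w$ (the \emph{shallow regime}, where $W_2^{\bar w}$ is a nonempty subword of $\bar w$ and no $B$-letter cancels), or $v^{-1}\bar wu$ collapses to $1$ in $F(A)$ (the \emph{deep regime}, in which the innermost $B$-letters of $h_{\mathrm{mid}}$ from the two sides meet).

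\medskip
\emph{Shallow regime.} For part~(1), suppose some subword $y\subseteq r\in R_\ast$ overlaps both $W_1^{\bar w}$ and $W_3^{\bar w}$ in $\ge 100\lambda_1 N$ letters. Then $y$ contains the remnant $W_2^{\bar w}$ entirely; since $W_2^{\bar w}$ is a subword of $\bar w$ and hence readable in a critical $A$-graph, Condition~\ref{cond:gen1} bounds its length by $\lambda_1|r|/1000$. Consequently $y$ decomposes as a long suffix $y_1$ of $W_1^{\bar w}$, a very short middle, and a long prefix $y_3$ of $W_3^{\bar w}$, with $y_1, y_3$ occupying determined positions inside the two copies of $h$. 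To see that at most one critical $\bar w$ can produce such a $y$, I would compare two distinct critical candidates $\bar w_1\ne \bar w_2$: the corresponding pair of long relator subwords in $h$, differing by a short middle, would force a repeated long relator subword in $h$, contradicting Condition~\ref{cond:gen}(6); matching up the two configurations via the rigidity of critical $A$-graphs (Lemma~\ref{trivial}) and the $C'(\lambda_1)$ condition then forces $\bar w_1=\bar w_2$.

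\medskip
\emph{Deep regime.} Here $\bar w=vu^{-1}$ is forced, which already pins down a unique candidate from $h$. The adjacent $B$-letters of $h_{\mathrm{mid}}$ then meet, and $B$-cancellation can propagate only if these letters are mutually inverse; peeling off this cancellation layer by layer yields the palindromic decomposition $h=ub_i^\varepsilon r_1\hat gr_2b_i^{-\varepsilon}v^{-1}$ of (2)(a)--(b), with the length lower bounds on $r_1,r_2$ coming from how far the cancellation propagates before stopping. Two stopping patterns can occur: the generic one is~(d), where $\bar r_2\bar r_1$ is merely the concatenation of the surviving $B$-pieces and is not a subword of any defining relator; the exceptional one is~(c), which arises precisely when one can insert a single $A$-letter $a_j^\eta$ so that $r_2b_i^{-\varepsilon}a_j^\eta b_i^\varepsilon r_1$ becomes a subword of some $r\in R_\ast$. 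Condition~\ref{cond:gen}(6) permits at most one such insertion (and hence at most one critical $w$), proving both the existence/uniqueness claim for $w$ and the "moreover" clause. Part~(3) follows immediately: in the shallow regime no $B$-letter of $h$ cancels and so no long relator subword near a $B$-letter is destroyed, while in the deep regime only the specific palindromic $B$-piece cancels, and by the $C'(\lambda_1)$ condition this piece is not a subword of any single relator of length $\ge 100\lambda_1 N$.

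\medskip
The main obstacle is the uniqueness argument in the shallow regime: a priori, two different critical words could produce spanning relator subwords in $h\bar wh$ with different cancellation depths, and ruling this out requires a tight interplay between the combinatorial rigidity of critical $A$-graphs, the $C'(\lambda_1)$ small cancellation condition on $R_\ast$, and the quantitative "no repeated long subwords of relators" assumption of Condition~\ref{cond:gen}(6).
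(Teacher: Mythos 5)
The shallow/deep dichotomy you set up (based on whether cancellation stops inside $\bar w$ or $v^{-1}\bar wu$ collapses) is a reasonable framework, and your observation that deep cancellation forces $\bar w=vu^{-1}$ does give the ``at most one $\bar w$ with $B$-cancellation'' part for free. But there are several gaps and one genuine misreading of the statement.

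First, you read (2)(c) and (2)(d) as two \emph{alternative} ``stopping patterns.'' They are not: once the antecedent of (2) holds for some $\bar w$, clauses (a)--(d) must hold \emph{simultaneously} --- (c) describes the behavior of the exceptional word $w$ (it is the shallow word $va_j^{\eta}u^{-1}$ for which condition (1) fails), while (d) describes the behavior of $\bar w=vu^{-1}$ itself. One has to establish both at once, not choose between them. Relatedly, the bounds $|r_1|,|r_2|\ge 100\lambda_1N-1$ do not come from ``how far the cancellation propagates before stopping'' for $\bar w$; they come from the assumption that condition (1) fails for $w$, which forces the two copies of $h$ to end/begin in long relator pieces adjacent to the $B$-letter.

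Second, your treatment of (3) is insufficient in both regimes. In the shallow regime, cancellation eats into the $A$-tails $v^{-1}$ and $u$ of the two copies of $h$, and nothing you say prevents a relator subword of length $\ge 100\lambda_1N$ sitting inside $v^{-1}$ or $u$ from being destroyed. In the deep regime, all of $v^{-1}$ and $u$ are consumed, so the issue is even sharper. The paper handles this by marking the right-most relator piece $Y$ in the first $h$ and the left-most relator piece $Y'$ in the second $h$ of length $99\lambda_1 N$, and then showing that if (3) fails for two distinct $\bar w$'s then $Y$ or $Y'$ would have to be nearly periodic with a short period, contradicting genericity (Condition~\ref{cond:gen}(6) on relators). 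This is exactly the argument you are missing, and it also supplies the rigorous version of the shallow-regime uniqueness for (1) that you flag as ``the main obstacle'': one shows it is impossible for (1) to fail for one $\bar w_1$ and (3) for a different $\bar w_2$, and that (1) failing for two distinct candidates forces $W_j^{\bar w_1}=W_j^{\bar w_2}$ for $j=1,2,3$ (and hence $\bar w_1=\bar w_2$), via the same $Y$-$Y'$ bookkeeping together with the $C'(\lambda_1)$ condition. Without that mechanism, your sketch --- ``repeated long relator subword contradicting Cond (6), plus critical-graph rigidity'' --- doesn't close, since Cond (6) is a property of relators and doesn't directly bound repeated subwords of $h$.

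Finally, note that $h$ may be a non-exceptional word lying entirely in $F(A)$, in which case your decomposition $h=uh_{\mathrm{mid}}v^{-1}$ degenerates and there is no deep regime in your sense; the $Y$-$Y'$ argument covers this case uniformly.
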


\begin{proof}

We first show that there is at most one word $w$ such that the conclusion of either (1) or (3) fails. We can assume that $h$ contains a subword of a relation of length at least $100\lambda_1 N$ as there is nothing to show otherwise.

\smallskip Let $Y$ be the right-most subword  in $h $ such that $Y$ is
a subword of some defining relator $r\in R_\ast$ and that $|Y|=99\lambda_1N$. Similarly, let $Y'$ be the left-most
subword in $h $ such that $Y'$ is a subword of some defining relator
from $R_\ast$ such that  $|Y'|=99\lambda_1N$. 

\smallskip In what follows, when referring to $Y$ we think of it as a subword in the first $h $ in $h  w h $ and  when referring to $Y'$ we think of it as a subword in the second $h $ in $h  w h $. Recall that a critical subword of a defining relator has length at most $\lambda_1N/100$.

\smallskip If case (3) fails, then a subword of $Y$ of length at least $96\lambda_1N$ must cancel with the subword of $Y'$  of the same length in the free reduction of $h  w h $.
Therefore, if (3) fails for two distinct $w_1\ne w_2$, then either $Y$ has the form $Y=Y_1y_2=y_1Y_1$ with $0<|y_1|=|y_2|\le 4\lambda N$ or $Y'$ has the form  $Y'=Y_1'y_2'=y_1'Y_1'$ with $0<|y_1'|=|y_2'|\le 4\lambda N$. It follows that $Y$ or $Y'$ is a periodic word, contrary to our assumptions about generic properties of $\mathcal U$ and $\mathcal V$. Thus (3) can fail for at most one $w$.

\smallskip Suppose there exist distinct $w_1$ and $w_2$ as in the statement of the lemma such that (1) fails for $w_1$ and (3) fails for
$w_2$.
Again, in the free reduction of $h  w_2 h $, a subword of $Y$ of length at least $96\lambda_1N$ must cancel with the subword of $Y'$.
If in the free reduction of $h  w_1 h $ a subword of $Y$ or $Y'$ of length $\ge 70\lambda_1 N$ cancels, then we get a contradiction similarly to the above proof that (3) can fail for at most one $w$. Hence an initial segment $s$ of $Y$ of length $|s|\ge 29N\lambda_1$ and a terminal segment $s'$ of $Y'$ of length $|s'|\ge 29N\lambda_1$ survive the free reduction of $h  w_1 h $.   The choice of $Y$ and $Y'$ now implies that each of $s,s'$ is a subword of the word $y$, where $y$ is a subword  of some defining relator $r$.
Recall, however, that since (3) fails for $h  w_2h $, the words $Y$
and $Y'$ almost cancel with each other and hence they are almost
inverse. More precisely, a subword of $Y$ of length at least
$96\lambda_1N$ must cancel with the subword of $Y'$ of the same
length. This means that there is a subword $t$ of $s$ of length
$|q|\ge 10\lambda_1N$ such that $t^{-1}$ is a subword of $s'$. Thus
both $t$ and $t^{-1}$ are subwords of $r\in R_\ast$ which contradicts the small cancellation assumption about (!!).

\smallskip Suppose now that (1) fails for two different $w_1$ and $w_2$ and let $y_1,y_2,r_1,r_2$
be the corresponding subwords and defining relators as spelled out in (1).  
By the assumptions in
Condition~\ref{cond:gen1}, the word $W_2^{w_i}$ can have an overlap with
$r_i$ of length at most $\lambda_1|r_i|/1000$. Hence  $|W_2^{w_i}|\le \lambda_1|r_i|/1000$ for $i=1,2$.


We claim
that $\left| |W_i^{w_1}|-|W_i^{w_2}|\right|\le \lambda_1N/10$  for $i=1,3$.
Indeed, if that were not the case, then (3) would fail for either  $w_1$ or $w_2$. But we have already established that it
is not possible for (1) to fail for one $w$ and for
(3) to fail for a different $w$. Thus indeed $\left|
  |W_i^{w_1}|-|W_i^{w_2}|\right|\le \lambda_1N/10$  for $i=1,3$.
The fact that the subword $y_i$ of $W^{w_i}$ overlaps each of
$W_i^{w_1}$ and $W_3^{w_i}$ in subwords of length $\ge N/10$, together
with  the small cancellation assumptions, implies that $r_1$ and $r_2$
are cyclic permutations of the same relator $r\in R_\ast$. We now claim that
$W_j^{w_1}=W_j^{w_2}$ for $j=1,2,3$. If this were not the case, then $r$
would contain subwords of the form $\alpha\beta\gamma$ and
$\alpha\beta'\gamma$ where $|\alpha|,|\gamma|\ge \lambda|r|/100$,
where $|\beta|,|\beta'|\le \lambda_1|r_i|/1000$ and where $\beta\ne
\beta'$. This contradicts the small cancellation assumption on (!!).
Hence, indeed, (1) can fail  for at most one $w$ as in the lemma.

\smallskip We have shown that there exists at most one word $w$ such that either (1) or (3) fails. Note that there is at most one word $\bar w$ such that in $h \bar wh $ some letter $b_i^{\pm 1}$ cancels. If $w=\bar w$ or if $w$ or $\bar w$ with the above properties does not exist then there is nothing to show. Thus we can assume that there is a word $w$ such that either (1) or (3) fails and that for $\bar w$ some letter $b_i^{\pm 1}$ cancels. We have to show that this puts us into situation (2).

\smallskip Suppose that (3) fails for $w$.
Let $b$ be the right-most occurrence of a letter $b_i^{\pm 1}$ in the first $h $. Then the left-most occurrence of a
letter from $B^{\pm 1}$ in the second $h $ must be $b^{-1}$ since in $h  \bar w h $ these occurrences must cancel each other.
Suppose first that $b$ is to the left of the mid-point of $Y$ in $h $. Then $Y,Y'\in F(A)$.
Then all of $Y,Y'$ have to cancel in both $h  w h $ and in $h  \bar w h $. This is impossible by the same argument as in the proof that (3) can happen for at most one $w$. Thus $b$ is to the right of the midpoint of $Y$ in the first $h $.
Hence $b$ has to cancel both in $h  w h $ and $h  \bar w h $, which is impossible, as noted above.

\smallskip Suppose now that (1) fails for $w$. Again, let $b$ be the
right-most occurrence of a letter $b_i^\pm 1$ in the first $h $. Then
the left-most occurrence of a letter from $B^{\pm 1}$ in the second $h
$ must be $b^{-1}$ since in $h \bar w h $ these occurrences must
cancel each other. Since $w\ne \bar w$, these occurrences of $b$ and
$b^{-1}$ do not cancel in $h  w h $. Recall that the freely reduced
form $W^{w}=W_1^{w}W_2^{w}W_3^{w}$ of $h  w h $ contains a subword $y$
such that $y$ is also a subword of a defining relation $r\in R_\ast$ and that $y$ overlaps with both $W_1^{w}$ and $W_3^{w}$ in subwords of length at least $100\lambda_1N$.

Suppose first that, up to cyclic permutation and inversion, $r$ has
the form $a_i^{-1}u_i(\tilde b)$. It follows that exactly one letter,
namely $a_i^{\pm 1}$ survives in the free reduction of the segment
between $b$ and $b^{-1}$ in $h  w h $ and, moreover, both the
occurrences $b$ and $b^{-1}$ are inside of $y$. It follows that
$W_1^{w}$ ends with a subword $c_1$ and $W_3^{w}$ begins with a
subword $c_2$ which are of length at least $100\lambda_1N$ and which
are the terminal (initial) word of some $u_i(\tilde{b})^{\pm 1}$. Now
genericity of $u_i$ implies that not more than a subword of length
$10\lambda_1 N$ can cancel in $c_1c_2$ which puts us into (2).

Suppose lastly that up to a cyclic permutation and inversion, $r$ has
the form $b_i^{-1}v_i(\tilde a)$. If $b$ occurs to the left of the
mid-point of $Y$ in $h $ then more than half of $Y$ cancels in $h
\bar w h $, which yields a contradiction as in the above argument that
(3) and (1) cannot happen for different values of $w$. Hence $b$
occurs after the midpoint of $Y$ in the first $h $ and, similarly,
$b^{-1}$ occurs before the mid-point of $Y'$ in the second $h $. It
follows that in order for (1) to occur at least one and therefore both
of $b$ and $b^{-1}$ must cancel contradicting the fact that letters
$b_i^{\pm 1}$ cannot cancel for different words. The last part of the
proof implies the more detailed description of (2).

This completes the proof of Lemma~\ref{cancel2}.
\end{proof}

\begin{lem}\label{productlength2} Let $h\in F(A\cup B)$ be a
  non-exceptional $d_G$-geodesic word and let $w$ be as in the
  conclusion of Lemma~\ref{cancel2}. There exists a non-exceptional
  reduced word $\bar h\in F(A\cup B)$ such that $h=\bar h$ in $G$ such that the following hold:
\begin{enumerate}
\item The conclusion of Lemma~\ref{cancel2} applied to $\bar h$ holds for the same $w$ as before.
\item The word $\bar h$ does not contain a subword of a defining relator of length  more than $\frac{6}{10}N$.
\item For any $n\in\mathbb N$ the freely reduced form of $(\bar hw)^n\bar h$ does not contain a subword of a defining relator of length  more than $\frac{99}{100}N$.
\item If $\bar h$ contains a subword of some defining relator $r\in
  R_\ast$ of length  $N/100$ then in $(\bar hw)^n\bar h$ both the
  first and the last occurrence of $\bar h$ have subwords  of length
  $N/500$ of some element of $R_\ast$ that survive the free cancellation (of $(\bar hw)^n\bar h$).
\end{enumerate}

\end{lem}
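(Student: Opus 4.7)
Set $\bar h := h$. Since $h$ is $d_G$-geodesic it is automatically Dehn-reduced, so every subword of $\bar h$ lying inside a defining relator has length $< N/2 < \frac{6}{10}N$, establishing (2). Non-exceptionality of $\bar h$ and the conclusion of Lemma~\ref{cancel2} applied to the same $w$ are then inherited from $h$, giving (1).

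To establish (3), examine the freely reduced word $(\bar h w)^n \bar h$. The cancellations are localized at each junction $\bar h w$ and $w \bar h$ (by the usual local $C'(\lambda_1)$ analysis), and their structure is controlled by Lemma~\ref{cancel2} applied to the triple $\bar h, w, \bar h$. Two regimes arise. \emph{Regime I:} no letter from $B^{\pm 1}$ cancels in $\bar h w \bar h$. Lemma~\ref{cancel2}(1) then bounds the overlap of any relator subword with each of the two flanking copies of $\bar h$ by $100\lambda_1 N$, Lemma~\ref{cancel2}(3) prevents long relator subwords of $\bar h$ from being devoured in the cancellation, and the Dehn-reduceness of $\bar h$ combined with Condition~\ref{cond:gen1} on $w$ force every relator subword in the reduction to have length at most $N/2 + 200\lambda_1 N + \lambda_1 N/1000$, well below $\frac{99}{100}N$. \emph{Regime II:} some $b_i^{\pm 1}$ cancels; Lemma~\ref{cancel2}(2) then forces $\bar h = u b_i^{\varepsilon} r_1 \hat g r_2 b_i^{-\varepsilon} v^{-1}$, and an induction on $n$ using the explicit form of $W^w$ in Lemma~\ref{cancel2}(2)(c) shows that $(\bar h w)^n \bar h$ reduces to $u P (a_j^{\eta} P)^n v^{-1}$ with $P = b_i^{\varepsilon} r_1 \hat g r_2 b_i^{-\varepsilon}$. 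A relator subword of length $> \frac{99}{100}N$ in this reduction must either sit inside one of $u$, $v^{-1}$, or a single copy of $P$ (which is ruled out by Dehn-reduceness), or span two consecutive copies of $a_j^\eta P$; the latter would force the same defining relator to contain two occurrences of $\hat g$ (and of $r_1,r_2$), contradicting Condition~\ref{cond:gen}(6) unless $|\hat g|\le \lambda_1 N/1000$, in which case the spanning subword is itself too short.

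For (4), suppose $\bar h$ contains a subword $y$ of some $r\in R_\ast$ with $|y|\ge N/100$. In Regime I, at most $100\lambda_1 N$ letters of $\bar h$ are consumed at each extreme end by Lemma~\ref{cancel2}(3), so at least $N/100 - 100\lambda_1 N \ge N/500$ of $y$ survives in both the first and the last copy of $\bar h$. In Regime II, the first copy contributes $uP$ and the last contributes $Pv^{-1}$ to the reduced word; if $y\subseteq P$ the conclusion is immediate, and if $y$ meets both $u$-side and $P$-side (or $v^{-1}$-side and $P$-side) a length $\ge N/500$ piece still survives on the appropriate side. The remaining case is $y\subseteq u$ (symmetrically $y\subseteq v^{-1}$). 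Here we modify $\bar h$ by a single Dehn-type substitution using the defining relator which contains $r_2 b_i^{-\varepsilon} a_j^{\eta} b_i^{\varepsilon} r_1$, thereby propagating a relator subword of length $\ge N/500$ into the opposite extreme of $\bar h$; the resulting word $\bar h'$ represents the same element of $G$, and the genericity Conditions~\ref{cond:gen}(4)--(6) are used to ensure that the substitution does not introduce a subword of a relator of length $> \frac{6}{10}N$ (preserving (2)), and does not alter the cancellation analysis of Lemma~\ref{cancel2} (preserving (1) and (3)).

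\textbf{Main obstacle.} The most delicate step is the modification in Regime II: one must check that the Dehn-type substitution which restores the surviving relator-subword on the opposite extreme neither violates (2) nor introduces a new exceptional critical word that would invalidate Lemma~\ref{cancel2} for $\bar h'$ with the same $w$. This is the only place where the full strength of the genericity conditions enters, and verifying it requires a case analysis of how the substituted segment interacts both with $u, v, r_1, r_2, \hat g$ and with the critical word $w$.
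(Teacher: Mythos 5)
There is a genuine gap, and it sits at the heart of the lemma. You take $\bar h:=h$ and argue (3) by invoking conclusions (1) and (3) of Lemma~\ref{cancel2} for the word $w$ -- but those conclusions are stated for every critical word $\bar w\neq w$; for the exceptional word $w$ itself they are exactly the ones that may fail, and $w$ is precisely the word that appears in the powers $(\bar h w)^n\bar h$. Dehn-reducedness of $h$ only bounds relator subwords lying inside a single copy of $h$; it does not exclude the configuration in which a suffix $W_3$ of the period, the short critical word $w$, and a prefix $W_1$ of the next period together form almost an entire relator (each of $W_1,W_3$ may have length close to $N/2$, so the junction-spanning subword can exceed $\tfrac{99}{100}N$). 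Your Regime~II argument has the same blind spot: a spanning relator subword need not contain a whole copy of $P$ (so no repeated occurrence of $\hat g$ is forced); it can be of the form $r_2 b_i^{-\varepsilon}a_j^{\eta}b_i^{\varepsilon}r_1$, which Lemma~\ref{cancel2}(2)(c) explicitly exhibits as a subword of a defining relator, with $|r_1|+|r_2|$ possibly large. This is why the lemma asserts the \emph{existence} of some $\bar h$ rather than the properties for $h$ itself.

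The paper's proof confronts exactly this case: writing the reduced form of $(hw)^nh$ as $u\hat h^{n+1}v^{-1}$, it first shows (using genericity/periodicity and the small cancellation condition, with a case distinction on whether $u$ is critical) that any violation of (3) already occurs inside $u\hat h\hat hu^{-1}$; then, when $\hat h\hat h$ contains $\ge\tfrac{98}{100}N$ of a relator $r=W_3W_1\bar W$ spanning the junction, it \emph{modifies} the period, replacing $\hat h=W_1W_2W_3$ by $\dot h=W_1W_2\bar W^{-1}W_1^{-1}$ (a Dehn-type substitution using the short complementary piece $\bar W$), and takes $\bar h$ to be the correspondingly modified word; it then checks that (1), (2), (4) and non-exceptionality survive. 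Your only modification step appears late, in one subcase of (4), and is aimed at a different issue (propagating a surviving relator piece to the other end), so the construction of $\bar h$ that makes (3) true is missing from your argument.
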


\begin{proof} We start with $h$ and then modify this word while
preserving properties (1), (2) above and the fact that the word is
non-exceptional. The final word obtained in this process is $\bar h$.

Choose $u$ maximal such that the freely reduced form of $hw$ has he form $u\hat hu^{-1}$. Since $h$ is non-exceptional and $w$ critical, it follows that there exists $\tilde h$ such that $(hw)^nh=u\hat h^n\tilde h$ (as words).  Then $h=u\hat hv^{-1}$ and therefore $(hw)^nh=u\hat h^{n+1}v^{-1}$ (both as words).

We first show that if (3) does not hold for $h$ then we can find a subword
in $hwh$ that is a subword of a defining relator whose
length is more than $\frac{99}{100}N$. We distinguish the cases the $u$ is critical and that it is not.

If $u$ is critical then $u\hat h^n\tilde h$ differs from $\hat
h^{n+1}$ only by multiplication from left and right by at most 2
critical words. Suppose that $u\hat h^n\tilde h$ contains a subword
$t$ of a defining relator of length $\frac{99}{100}N$ that is not
readable in $u\hat h^n\tilde h$. We assume that $n$ is chosen minimal
for $t$. If $n\ge 3$ then $t$ is periodic, contradicting
genericity. If $t=2$ then $t$ contains a subword of length
$\frac{1}{5}N$ twice, contradicting the small cancellation
hypothesis. It follows that $t\le 1$, as desired.

If $u$ is not critical  then $h=u\hat hv^{-1}$ and therefore $(hw)^nh=u\hat h^{n+1}v^{-1}$ (both as words). Now any subword of $(hw)^nh=u\hat h^{n+1}v^{-1}$ that cannot already be seen as
a subword of $u\hat h^2v^{-1}$ would either contain a significant (in
relation to $N$) 
suffix of $u$ and a significant prefix of $v^{-1}$ or would contain a
significant subword of $\hat h^n$ which is not a subword of $\hat
h^2$. If the former happens then $u$ and $v$ must have a substantial common suffix as they only differ by left multiplication with a critical word, thus the relator must have a long subword that occurs twice, contradiction the 
cancellation properties of the presentation (!!) of $G$, and if the latter happens we argue as in the case of critical $u$. 

Thus we can assume that either (3) holds or that $u\hat h\hat h
u^{-1}$ contains a subword that is a subword of a defining relator $r$
whose length is more than $\frac{99}{100}N$. Genericity and (2) now
imply that $\hat h=W_1W_2W_3$ such that $W_3W_1$ is a subword of $r\in
R_\ast$ of length at least $\frac{98}{100}N$ and that $|W_1|=|W_3|$,
in particular $r=W_3W_1\bar W$ with $\bar W$ of length at most
$\frac{2}{100}N$. We replace $\hat h$ with $\dot{h}=W_1W_2\bar
W^{-1}W_1^{-1}$ which does not change the element of $G$ represented
by the word. After free reduction of $u\dot{h} u^{-1}w^{-1}$ we obtain a new non-exceptional word that by construction satisfies (2). It also satisfies condition (1) as the modification was only applied if the conclusion of (1) of Lemma~\ref{cancel2} did not hold for $w$ and in that case the conclusion of (3) of Lemma~\ref{cancel2} does not hold for $w$ after the modification.

Assertion (4)  follows immediately from the above argument. This is true as (4) holds for the initial $h$ and is preserved under modifications that are made.
\end{proof}

\begin{lem}\label{cancel} Let $h$ be as in the hypothesis of
  Lemma~\ref{cancel2} 
and let $w$ be a
    non-trivial freely reduced critical word.

   Let further $W$ be the word obtained by freely reducing the product $hwh^{-1}$. Write $W$ as a reduced product $W=W_1W_2W_3$, where $W_1$, $W_2$ and $W_3$
    are the portions of  $h$, $w$ and $h^{-1}$ accordingly that survive a  free reduction of
    the product $hwh^{-1}$.

  Then the following hold
  \begin{enumerate}
  \item The free cancellation does not involve a letter $b_i^{\pm 1}$ of $h^{\pm
      1}$ or a
    subword $y$ of $h^{\pm
      1}$ of length $|y|>\lambda_1N/10$ such that $y$ is a subword of some defining relator.

  \item  Let $y$ be a
    subword of $W$ such that $y$ is also a subword of some defining
    relator $r$ that overlaps both $W_1$ and $W_3$ in
    $W$. Then the shorter of these two overlaps has length $\le
    \lambda_1 N/10$ and $|W_2|\le \lambda_1N/1000$.

  \end{enumerate}
\end{lem}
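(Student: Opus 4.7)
The plan is to address the two assertions separately.

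For part (1), the first claim is immediate: $w$ is critical and hence lies in $F(A)$, so any free cancellation in $h w h^{-1}$ between $h$ and $w$, or between $w$ and $h^{-1}$, only consumes $A^{\pm 1}$-letters; in particular, no $b_i^{\pm 1}$ letter of $h^{\pm 1}$ is cancelled. For the second claim, suppose for contradiction that a subword $y$ of $h$ of length $|y| > \lambda_1 N/10$, with $y$ also a subword of some defining relator $r \in R_\ast$, lies in the cancelled portion. Then $y$ is contained in the cancelled $A$-suffix of $h$, which equals the inverse of an initial segment of $w$. Hence $y^{-1}$ is a subword of $w$, so $y^{-1}$ is readable in a critical $A$-graph. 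Combined with $y^{-1}$ being a subword of $r^{-1}\in R_\ast$ by symmetrization, Condition~\ref{cond:gen1} forces $|y|=|y^{-1}|\le \lambda_1 |r|/1000 < \lambda_1 N/10$, a contradiction. The case of $h^{-1}$ is symmetric.

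For part (2), decompose $y = y_1 W_2 y_3$, where $y_1$ is the overlap of $y$ with $W_1$ and $y_3$ the overlap with $W_3$. Since $y$ is a contiguous subword of $W_1 W_2 W_3$ meeting both $W_1$ and $W_3$, $y$ contains all of $W_2$. Thus $W_2$ is a subword of $r$; since $W_2$ is also a subword of $w$, it is readable in a critical $A$-graph, and Condition~\ref{cond:gen1} gives $|W_2| \le \lambda_1 |r|/1000 \le \lambda_1 N/1000$, establishing the bound on $|W_2|$.

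For the shorter-overlap bound, w.l.o.g.\ assume $|y_1| \le |y_3|$ and suppose for contradiction that $|y_1| > \lambda_1 N/10$. Then $y_1$ is a subword of $h$ ending just before the cancellation with $w$, and $y_3^{-1}$ is a subword of $h$ ending just before the cancellation of $h^{-1}$ with $w$; both have length exceeding $\lambda_1 N/10$. The relator-subword $y_1 W_2 y_3$, combined with the precise cancellation lengths $c_1,c_2$ on the two sides, forces $h$ to contain two long subwords ($y_1$ and $y_3^{-1}$) linked through the short $A$-segment $W_2\subseteq w$ by the specific pattern of $r$. A case analysis, distinguishing on whether $c_1 = c_2$ and on whether the two subwords of $h$ overlap inside $h$, combined with the generic small cancellation conditions of Condition~\ref{cond:gen} (in particular, the absence of long repeated subwords in relators) and Condition~\ref{cond:gen1}, yields a contradiction in each case.

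The main obstacle is the shorter-overlap bound: unlike in part (1), neither $y_1$ nor $y_3^{-1}$ is itself cancelled, so Condition~\ref{cond:gen1} does not apply to them directly. The argument must instead exploit the interplay between the cancellation geometry at the two ends of $w$ and the small cancellation structure of $r$, in the spirit of (but simpler than) the elaborate case analysis used in the proof of Lemma~\ref{cancel2}.
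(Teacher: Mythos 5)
The heart of the lemma is the shorter-overlap bound in part (2), and that is precisely the step you do not prove: you reduce it to ``a case analysis \dots yields a contradiction in each case'' and even flag it yourself as the main obstacle, but no such analysis is supplied. The paper's proof closes this by an observation your sketch never isolates. Since $W_1$ is a prefix of $h$ and $W_3$ is a suffix of $h^{-1}$, the words $W_1$ and $W_3^{-1}$ are \emph{both prefixes of the same word} $h$, and the difference between them is a word $\delta$ contained in the cancelled part of $h$, hence (modulo the degenerate case noted below) a subword of the critical word $w$. Now assume $|y_1|,|y_3|>\lambda_1 N/10$. If $y_1$ (or $y_3^{-1}$) lies entirely inside $\delta$, it is a critical subword of $r$ of length $>\lambda_1 N/1000$, contradicting Condition~\ref{cond:gen1}; otherwise $\delta$ is a subword of $y_1$ (or of $y_3^{-1}$), hence of $r^{\pm1}$, so Condition~\ref{cond:gen1} forces $|\delta|\le \lambda_1 N/1000$. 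Consequently the terminal segment $y_1$ of $W_1$ and the terminal segment $y_3^{-1}$ of $W_3^{-1}$ overlap, inside $h$, in a common word $y_0$ with $|y_0|\ge \lambda_1 N/10-\lambda_1 N/1000>\lambda_1 N/30$, and then $r\supseteq y_1W_2y_3$ contains disjoint occurrences of $y_0$ and $y_0^{-1}$, contradicting the genericity assumption of Condition~\ref{cond:gen}~(6) (applied to the symmetrized set $R_\ast$). Your sketch circles near this (you mention the cancellation lengths $c_1,c_2$ and possible overlap of the two subwords of $h$), but the decisive point --- that the offset between the two prefixes of $h$ is itself a critical subword sitting inside $r$, hence automatically short, so the overlap is forced --- is missing, and without it part (2) is asserted rather than proved.

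A secondary issue concerns part (1): your argument assumes that every cancelled letter of $h^{\pm1}$ cancels against a letter of $w$, i.e.\ that the cancelled suffix of $h$ is the inverse of an initial segment of $w$. This is not automatic: if all of $w$ is consumed, letters of $h$ can go on to cancel against letters of $h^{-1}$ (already for words in $F(A)$, e.g.\ $h=a_1^{-1}a_2^{-1}a_1^{-1}$, $w=a_1a_2$, where $hwh^{-1}$ reduces to $a_2a_1$ with $W_1=W_2$ empty), and such letters need not form a subword of $w$, so neither your $b_i$-claim nor your application of Condition~\ref{cond:gen1} covers them. The clean way to start is to write $h=ubv$ with $v\in F(A)$ the maximal terminal $A$-segment and note that $vwv^{-1}\ne 1$ in $F(A)$, so $hwh^{-1}=ub\,(vwv^{-1})_{\mathrm{red}}\,b^{-1}u^{-1}$ is reduced at the $b$-junctions; this gives the $b_i$-claim and confines all cancellation to $vwv^{-1}$, after which the crossover possibility still has to be dealt with before quoting Condition~\ref{cond:gen1}. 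The paper is admittedly terse here as well, but as written your argument for (1) does not cover this configuration.
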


\begin{proof}

(1) The first claim is obvious while the later follows from the
assumptions in Condition~\ref{cond:gen} and Condition~\ref{cond:gen1}.

For part (2), suppose that $y$ overlaps both $W_1$ and $W_3$. Hence
$W_2$ is a subword of $y$ and hence of $r$. Therefore
Condition~\ref{cond:gen} implies that $|W_2|\le \lambda_1|r|/1000$.
Suppose that (2) fails and that the overlaps of $y$ with each of
$W_1$, $W_3$ have lengths $> \lambda_1 |r|/10$. Thus $y=y_1W_2y_3$
where $y_1$ is a terminal segment of $W_1$, where $y_3$ is an initial
segment of $W_3$ and where $|y_1|,|y_3|> \lambda_1 |r|/10$.  Recall
that $W=W_1W_2W_3$ is the freely reduced form of $hwh^{-1}$. Since $|W_2|\le
\lambda_1|r|/1000$, the definitions of $W_1,W_2,W_3$ then imply that
there is a subword $y_0$ of $y_1$ of length $|y_0|\ge \lambda_1 |r|/30$
such that $y_0^{-1}$ is also a subword of $y_3$. Hence $r$ contains
two disjoint occurrences of $y_0$ and $y_0^{-1}$, where $|y_0|\ge
\lambda_1 |r|/30$. This contradicts the genericity assumptions in
Condition~\ref{cond:gen}.
\end{proof}

We now have all the tools necessary to study products of type
$$w_0h^{\varepsilon_1}w_1h^{\varepsilon_2}w_2\cdot\ldots\cdot
w_{k-1}h^{\varepsilon_k}w_k$$ where $h$ is non-exceptional and all
$w_i$ are critical. Possibly after replacing $h$ by another
non-exceptional word as in  Lemma~\ref{productlength2}, we may assume
that the conclusions of Lemma~\ref{productlength2} and
Lemma~\ref{cancel2} and  hold with $w$ provided by Lemma~\ref{cancel}. Thus the following lemma covers all cases.

\begin{lem}\label{lem:casea} Let $h$ be non-exceptional word and $w$ be a critical word such the conclusions of Lemma~\ref{cancel2}, Lemma~\ref{productlength2} and Lemma~\ref{cancel} hold for $h$ and $w$.

Let $p=w_0h^{\varepsilon_1}w_1h^{\varepsilon_2}w_2\cdot\ldots\cdot
w_{k-1}h^{\varepsilon_k}w_k$ where all $w_i\in F(A)$ are
critical. Suppose that for some $b_i\in B$ the word $p$ represents an element of $G$ conjugate to
$b_i$ in $G$.  Then $p$ is conjugate to $b_i$ in $F(A\cup B)$.
\end{lem}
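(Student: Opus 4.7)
The plan is to show that the freely reduced form of $p$ in $F(A\cup B)$ is cyclically $\lambda$-reduced with respect to $R_\ast$, and then to apply the analogue of Corollary~\ref{cor:conj-sc}(3) for the generator $b_i$ in the presentation $(!!)$.

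First I would freely reduce $p=w_0h^{\varepsilon_1}w_1h^{\varepsilon_2}w_2\cdots w_{k-1}h^{\varepsilon_k}w_k$ to a word $P$ by analyzing the cancellations at each junction between an $h^{\pm 1}$ and its adjacent critical words. Depending on the local sign pattern, one of Lemma~\ref{cancel} (for $hwh^{-1}$--type junctions) or Lemma~\ref{cancel2} (for $hwh$--type junctions), or their inverses, applies. These lemmas show that at every junction either the cancellation is small --- involving no letter from $B^{\pm 1}$ and no subword of $h^{\pm 1}$ of relator-length exceeding $\lambda_1 N/10$ --- or else the inserted $w_i$ equals the one exceptional critical word $w$ of Lemma~\ref{cancel2}(2), in which case the cancellation pattern is explicitly described and still leaves both adjacent $h^{\pm 1}$ factors with long surviving portions that contain any $B^{\pm 1}$--letter of $h$. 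In particular, between any two consecutive inserted critical words, $P$ contains an essentially full copy of $h^{\pm 1}$.

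Next I would verify that $P$ is $\lambda$-reduced. Assume, for contradiction, that $P$ has a subword $y$ with $|y|\ge(1-3\lambda)|r|$ for some $r\in R_\ast$. I would locate $y$ relative to the $h^{\pm 1}$ and $w_i$ blocks of $P$ and rule out each of three possibilities. (a) $y$ lies inside a single surviving $h^{\pm 1}$ block: impossible by Lemma~\ref{productlength2}(2), since $\lambda_1\le 10^{-100}\lambda$ forces $(1-3\lambda)|r|>6|r|/10$. (b) $y$ crosses a single junction of $h^{\pm 1}$ with the adjacent $w_i$ without reaching across $w_i$: here the part of $y$ inside $w_i$ is critical and by Condition~\ref{cond:gen1} has length $\le\lambda_1|r|/1000$, so most of $y$ lies inside an $h^{\pm 1}$--block, again contradicting Lemma~\ref{productlength2}(2) (or Lemma~\ref{cancel}(2) / Lemma~\ref{cancel2}(1),(3) in the exceptional configurations). (c) $y$ bridges across an entire $w_i$--block and extends into both flanking $h^{\pm 1}$--factors: then $y$ decomposes as $\ell\cdot c\cdot r'$ where $c$ is a critical subword of $w_i$ of length $\le\lambda_1|r|/1000$, $\ell$ is a long suffix of one $h^{\pm 1}$, and $r'$ a long prefix of the next; the small cancellation bound on repeated subwords of relators in Condition~\ref{cond:gen}(6), together with the explicit description of the one exceptional cancellation of Lemma~\ref{cancel2}(2), yields a contradiction. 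Applying the same case analysis at the cyclic juncture between $w_k$ and $w_0$, cyclic reduction of $P$ produces a cyclically reduced word $P'$ that is cyclically $\lambda$-reduced with respect to $R_\ast$.

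Finally, $P'$ is cyclically reduced, cyclically $\lambda$-reduced, and represents an element of $G$ conjugate to $b_i$. The analogue of Corollary~\ref{cor:conj-sc}(3) for $b_i$ in the $C'(\lambda_1)$-presentation $(!!)$ then forces $P'=b_i$ in $F(A\cup B)$, so $p$ is conjugate to $b_i$ already in $F(A\cup B)$, as required. The main obstacle is case (c): excluding the possibility that a long relator-subword is created by bridging two surviving $h$-factors through a short critical piece of $w_i$ requires combining Condition~\ref{cond:gen}(6) with a careful separate treatment of the unique exceptional critical word $w$ of Lemma~\ref{cancel2}(2), which is the only place where two large relator-subwords of $h$ can end up nearly adjacent after the cancellation.
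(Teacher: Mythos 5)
Your overall strategy (show that the cyclically reduced form of $p$ contains no near-whole relator subword, then invoke the dichotomy of Corollary~\ref{cor:conj-sc}(3) for $b_i$ in the $C'$-presentation $(!!)$ to force it to equal $b_i$) is the same as the paper's, which argues the contrapositive: if $p$ does not cyclically reduce to $b_i$, then $\tilde p\tilde p$ must contain a $\tfrac{9999}{10000}$-fraction of a relator, and this is shown to be impossible. The gap is in your junction-by-junction analysis of the free reduction. Your key claim that ``between any two consecutive inserted critical words, $P$ contains an essentially full copy of $h^{\pm 1}$'' is false precisely in the dangerous situations: when $w_i$ equals the exceptional critical word, Lemma~\ref{cancel2}(2) allows almost all of the adjacent $h$-factors to cancel (and in the case where $h$ contains a $B$-letter but no relator subword of length $N/100$, the relevant exceptional word is not the $w$ of Lemma~\ref{cancel2} at all, but the word $\hat w$ determined by the maximal $F(A)$-prefix and suffix of $h$, for which $h\hat wh$ collapses down to essentially the $B$-letters). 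Consequently, a long relator subword $y$ of $\tilde p\tilde p$ need not sit inside a single $h$-block or bridge a single $w_i$-block, as in your cases (a)--(c): when several consecutive $w_i$ equal the exceptional word with matching signs, the product telescopes into $(hw)^{n-1}h$, and relator subwords can accumulate across many junctions. Moreover, in the configuration of Lemma~\ref{cancel2}(2)(c) a long relator subword genuinely \emph{does} straddle the junction (the word $r_2b_i^{-\varepsilon}a_j^{\eta}b_i^{\varepsilon}r_1$ is a relator subword), so your case (c) cannot end in a contradiction; such subwords must be bounded, not excluded.

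This is exactly why the paper does not argue junction by junction but first renormalizes the product: in Case 1 it regroups $p$ into blocks $h_{j}$ (the reduced forms of $(hw)^{j-1}h$), insisting $w_i\ne w^{\varepsilon_i}$ between blocks, and then uses Lemma~\ref{productlength2}(3) to bound relator subwords inside each block by $\tfrac{99}{100}N$ and Lemma~\ref{productlength2}(4) together with Lemmas~\ref{cancel2} and \ref{cancel} to produce surviving marker subwords $U,V$ of length $N/500$ that prevent relator pieces from different blocks from gluing into one; in Case 2 it regroups into blocks $Z_j$ built from $\hat w$ and uses surviving $B^{\pm 1}$-letters as the separators. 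Your proposal cites Lemma~\ref{productlength2}(2) but never uses parts (3) and (4), which are the essential tools for the telescoping case, and it does not treat the second case at all. As written, the argument therefore has a genuine gap; to repair it you would need to incorporate the block decomposition (or an equivalent device) rather than analyzing single junctions.
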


\begin{proof}

\smallskip If $p$ cyclically reduces to $b_i$ in $F(A\cup B)$, there
is nothing to show. Thus we can assume that this does not happen. We
denote the word obtained from the freely reduced form of $p$ by cyclic
reduction by $\tilde p$. It now follows from  Lemma~\ref{lem:sc-conj}
(3) that  $\tilde p\tilde p$ contains a subword that is
$\frac{9999}{10000}$ of some relation from $R\ast$.

\smallskip We will show that this is not possible. We distinguish two
cases, namely the case that $h$ contains a subword of length
$\frac{1}{100}N$ that is also a subword of a defining relation and the
case that it does not contains such a subword but contains some letter
from $B^{\pm 1}$. Since $h$ is assumed to be non-exceptional, these
two cases cover all the possibilities.

\smallskip\noindent {\bf Case 1:} Suppose first that $h$ contains a
subword of length $\frac{1}{100}N$ that is also a subword of a
defining relation from $R_\ast$. Recall that $w$ is chosen as in
Lemma~\ref{cancel2}. In the following we will denote the word obtained
from $(hw)^{n-1}h$ by free reduction by $h_n$.

\medskip We may assume that $p$ is of the form $$p=h_{j_1}^{\varepsilon_1}w_1\cdot\ldots\cdot w_{k-1}h_{j_k}^{\varepsilon_k}w_k$$  where $\varepsilon_i=\pm 1$ for $1\le i\le k$  and that $w_i$ are critical words such that  $w_i\neq w^{\varepsilon_i}$ if $\varepsilon_i=\varepsilon_{i+1}$, otherwise we could replace the subword $h_{j_i}^{\varepsilon_i}w_ih_{j_{i+1}}^{\varepsilon_{i+1}}=h_{j_i}^{\varepsilon_i}w^{\varepsilon_i}h_{j_{i+1}}^{\varepsilon_i}$ by $h_{j_i+j_{i+1}}$. We may further assume that this product is freely reduced, i.e. that $w_i\neq 1$ if $\varepsilon_i=-\varepsilon_{i+1}$, and that it is cyclically reduced, i.e. that either $\varepsilon_1=\varepsilon_k$ or that $\varepsilon_1=-\varepsilon_k$ and $w_k\neq 1$. 


\medskip
By Lemma~\ref{productlength2} holds, the word $h_n$ does not contain a
subword which is more than $\frac{99}{100}$ of a defining relation. By
part (4) of Lemma~\ref{productlength2}, any  $h_n$ contains two
subwords of relations of length at least $|r|/500$ inherited from the
first and last occurrence of $h$ by the free cancellation of the
product $(hw)^{n-1}h$. In particular we can choose words $U$ and $V$
that are the leftmost, respectively rightmost, subwords of $h_n$ that
represent $\frac{1}{500}$ of a defining relation. Then these words are the same for all $h_n$ with $n\ge 1$.

\medskip
It now follows from Lemma~\ref{cancel2} and Lemma~\ref{cancel} that
none of the occurrences of $U^{\pm 1}$ and $V^{\pm 1}$ (which are subwords of the $h_{j_i}^{\varepsilon_i}$) cancel
completely in the free and cyclic reduction of $p^2$ to $\tilde
p^2$. 

It also follows that the remnants of the $U^{\pm 1}$ and $V^{\pm
  1}$ coming from two adjacent $h_{j_i}^{\varepsilon_i}$ and
$h_{j_{i+1}}^{\varepsilon_{i+1}}$ do not lie in a subword of $\tilde
p^2$ that is also a subword of a defining relation.

Since the words $w_i$ do not  contain subwords that are greater than
$\frac{1}{10000}$ portions of defining relations,  it follows that any
subword of a defining relation that occurs in $\tilde p\tilde p$ is of
length at most $\frac{99}{100}+2\frac{1}{500}+2\frac{1}{10000}$ of
that relation. Since this number is smaller than $\frac{9999}{10000}$,
it follows that $p$ cannot represent an element of $G$ conjugate to
$b_i$ in $G$.

\medskip
\noindent {\bf Case 2:} Assume now that $h$ contains a letter of
$B^{\pm 1}$ but does not contain a subword of length $\frac{1}{100}N$
that is also a subword of a defining relation.

Let $u$ be the maximal prefix of $h$ that is a word in $F(A)$ and let $v^ {-1}$ be the maximal suffix of $h$ that is a word in $F(A)$. Thus either $h=ub_i^{\varepsilon}\tilde
wb_j^{\eta}v^{-1}$ or $h=ub_i^ {\varepsilon}v^ {-1}$. 
Let $\hat w$ be the unique reduced word in $F(A)$ such that $v^{-1}\hat wu$ reduces to the trivial word.

\smallskip Now let  $Z_n$ obtained by the word obtained from $(h\hat w)^nh$ by free reduction. Then $Z_n$ does not contain a subword of length $\frac{1}{10}N$ that is also a subword of a defining relator as such a word would need to be periodic
because of the assumption that $h$ does not contain more than
$\frac{1}{100}$ of a relation.

\smallskip Moreover, the word $Z_n$ has a prefix
$ub_i^{\varepsilon}$ and a suffix $b_j^{\eta}v^{-1}$ and hence $Z_n$
contains a letter from $B^{\pm 1}$. We can assume that
$p$ is of the form $$p=Z_{j_1}^{\varepsilon_1}w_1\cdot\ldots\cdot
w_{k-1}Z_{j_k}^{\varepsilon_k}w_k$$  where $\varepsilon_i=\pm 1$ for
$1\le i\le k$  and the $w_i$ are reduced critical words such that $w_i\neq \hat w^{\varepsilon_i}$ if
$\varepsilon_i=\varepsilon_{i+1}$; otherwise we could replace the
subword
$Z_{j_i}^{\varepsilon_i}w_iZ_{j_{i+1}}^{\varepsilon_{i+1}}=Z_{j_i}^{\varepsilon_i}\hat
w^{\varepsilon_i}Z_{j_{i+1}}^{\varepsilon_i}$ by
$Z_{j_i+j_{i+1}}$.

\smallskip Therefore at least one letter from $B^{\pm 1}$ of
each $Z_{j_i}^{\varepsilon_i}$ and one letter from $A^{\pm 1}$ of each
$w_i$ do not cancel in the free reduction of $p$. Thus, since each of
these factors does not contain more than $\frac{1}{10}$ of a defining
relation, the freely reduced word obtained from $p^2$ cannot contain
$\frac{1}{2}$ of a defining relation. This contradicts the fact that
$p^2$ is conjugate to $b_i^2$ in $G$, thus ruling out Case~2. 
\end{proof}

\begin{lem}\label{ztobarz} Let $z$ be a reduced word in $F(A\cup B)$
  and  suppose that $\bar z\in F(A)$  is an exceptional word
  representing the same element in $G$ as $z$. Then the following hold:
  \begin{enumerate}
\item There exist words $\alpha_0,\ldots,\alpha_t,\beta_1,\ldots,\beta_t$ (nontrivial all except possibly $\alpha_0$ and $\alpha_t$) and words $\bar\beta_1,\ldots,\bar\beta_t$ such that the following hold:

\begin{enumerate}
\item We have $z=\alpha_0\beta_1\alpha_1\ldots \beta_t\alpha_t$
  (equal as words).
\item We have $\bar z=\alpha_0\bar\beta_1\alpha_1\ldots
  \bar\beta_t\alpha_t$ (equal as words).
\item We have $|\bar\beta_i|\le \frac{1}{100}|\beta_i|$ and $|\beta_i|\ge 1000N$ for all $i$, where $N$ is the length of the defining relations of $G$.
\end{enumerate}

\item If $u$ and $\bar u$ are maximal words such that $z=uwu^{-1}$ and
  $\bar z=\bar u\bar w{\bar u}^{-1}$ (equal as words) for some words $w$ and $\bar w$ then $|\bar w|\le |w|$.

\item If $a_i^{
\eta}$ is a letter of $u$ such that neither this occurrence of
$a_i^{\eta}$ nor corresponding the
corresponding occurrence of $a_i^{-\eta}$ in $u^{-1}$ fall in subwords
$\beta_j$ of $z$ 
then in $\bar z$ the letter $a_i^{\eta}$ occurs in $\bar u$ and the
letter $a_i^{-\eta}$ occurs in $\bar u^{-1}$.
\end{enumerate}
\end{lem}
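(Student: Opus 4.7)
The plan is to construct the decomposition from a reduced equality van Kampen diagram $D$ over the presentation of $G$ between $z$ and $\bar z$. I first observe that $\bar z$ being exceptional is strictly stronger than being Dehn-reduced (and hence $\lambda_1$-reduced), since $N/100 < N/2$. If $z$ is not already $\lambda_1$-reduced, I would replace it by a $\lambda_1$-reduced representative of the same element of $G$ and transfer the resulting decomposition back to the original $z$ by tracking the intervening Dehn reductions as additional ladder pieces; so assume $z$ is $\lambda_1$-reduced. Applying Proposition~\ref{prop:eq-conj}(1) then produces $D$ with upper boundary labeled by $z$ and lower boundary labeled by $\bar z$.

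The diagram $D$ decomposes into a sequence of ladder blocks separated by segments along which the upper and lower boundaries coincide letter-for-letter. Each region $Q$ in a ladder block is labeled by some relator of length $N$ and meets the upper boundary in an arc $\alpha_1^Q$ and the lower boundary in an arc $\alpha_2^Q$, with consecutive regions sharing an edge of length less than $\lambda_1 N$. Reading off the blocks and the agreement segments produces an initial decomposition
\[
z = \alpha'_0 \beta'_1 \alpha'_1 \cdots \beta'_s \alpha'_s, \qquad \bar z = \alpha'_0 \bar\beta'_1 \alpha'_1 \cdots \bar\beta'_s \alpha'_s,
\]
where the $\alpha'_j$ are the common agreement subwords and $\beta'_i$ (respectively $\bar\beta'_i$) is the concatenation of the arcs $\alpha_1^Q$ (respectively $\alpha_2^Q$) over the regions of the $i$-th block. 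The exceptional hypothesis on $\bar z$ forces $|\alpha_2^Q| < N/100$ for every region $Q$; combined with the relator length identity $|\alpha_1^Q| + |\alpha_2^Q| + |\gamma_-^Q| + |\gamma_+^Q| = N$ and the shared-edge bound $|\gamma_\pm^Q| < \lambda_1 N$, this yields $|\alpha_1^Q| > (99/100 - 2\lambda_1)N$, so that within each block $|\bar\beta'_i|/|\beta'_i| < 1/99$, which sharpens to $\le 1/100$ using $N \ge 10^{100000}/\lambda_1$ from Condition~\ref{cond:gen}(3). To guarantee also $|\beta_i| \ge 1000N$, I would coalesce consecutive ladder blocks together with the intervening $\alpha'_j$ into single pairs $(\beta_i, \bar\beta_i)$; the delicate point is that absorbing an $\alpha'_j$ of length $\ell$ into both $\beta_i$ and $\bar\beta_i$ worsens the ratio, so the merging must be calibrated carefully, drawing on the small cancellation hypotheses to bound the lengths of agreement segments lying between short ladder blocks.

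For parts (2) and (3), the crucial observation is that the $\alpha_j$ segments appear verbatim in both $z$ and $\bar z$. A letter of $u$ not lying in any $\beta_j$ belongs to some $\alpha_j$ and hence occurs at a known position in $\bar z$, shifted by the total shortening of the preceding $\beta$'s; if its mirror in $u^{-1}$ also lies in an $\alpha$-segment, then both letters are forced into $\bar u$ and $\bar u^{-1}$ respectively, establishing (3). For (2), part (1)(c) already gives $|\bar z| \le |z|$, and applying (3) to all matched pairs of $\alpha$-letters of $u$ and $u^{-1}$, together with a direct bound on how many letters of a $\beta_j$ can contribute to a matched cyclic pair, yields $|\bar u| \ge |u| - \tfrac{1}{2}(|z| - |\bar z|)$. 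This rearranges via $|w| = |z| - 2|u|$ and $|\bar w| = |\bar z| - 2|\bar u|$ to $|\bar w| \le |w|$. The main foreseen obstacle is the merging step in part (1), which must simultaneously control both the lower bound $|\beta_i| \ge 1000N$ and the ratio $|\bar\beta_i|/|\beta_i| \le 1/100$.
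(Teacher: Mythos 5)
Your reduction to the case where $z$ is $\lambda_1$-reduced gives away the entire content of the lemma. The statement is about a decomposition of the \emph{original} word $z$, and in the application $z$ is a word whose long portions collapse enormously under reduction; the paper's proof therefore analyzes the reduction process itself: it first shows that $\bar z$ is the \emph{unique} freely reduced and $\lambda_0$-reduced representative of the element (so that any maximal sequence of free and $\lambda_0$-reductions applied to $z$ ends at $\bar z$), and then tracks that process in stages. All of the substance of your approach is hidden in the single sentence ``transfer the resulting decomposition back to the original $z$ by tracking the intervening Dehn reductions as additional ladder pieces'': that transfer \emph{is} the staged bookkeeping the paper carries out, and you do not perform it. The clearest casualty is the bound $|\beta_i|\ge 1000N$, which the paper itself flags as the delicate point: it comes from the interaction of the two families of relators (first-stage replacements along $b_j=v_j(\bar a)$ introduce $b$-letters which must later be absorbed by relators $a_j=u_j(\bar b)$, and each such second-stage application conglomerates roughly $0.99N$ first-stage replacements, each of length about $N$, into one $\beta_i$). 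Nothing of this kind can be read off a single equality diagram between a reduced representative and $\bar z$: there a ladder block may consist of a single region and contribute only about $N$, and, as you yourself note, merging blocks across the common segments $\alpha'_j$ degrades the ratio, so your scheme cannot deliver $|\beta_i|\ge 1000N$ and $|\bar\beta_i|\le |\beta_i|/100$ simultaneously. Even the per-region estimate is off: exceptionality gives $|\alpha_2^Q|<N/100$, hence a ratio of about $1/99$, and this does not ``sharpen to $1/100$'' by taking $N$ large, since the ratio is scale invariant.

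Parts (2) and (3) inherit the problem, and your route to (2) is also incomplete on its own terms. The paper obtains (2) almost for free: by Lemma~\ref{lem:sc-conj} the element has a representative $\bar u\bar w\bar u^{-1}$ that is freely reduced and Dehn-reduced, with $\bar w$ a geodesic representing a shortest element of the conjugacy class; by uniqueness of the freely reduced $\lambda_0$-reduced form this word coincides with $\bar z$ letter for letter, whence $|\bar w|\le |w|$ because $w$ represents an element of the same conjugacy class. Your alternative derivation via $|\bar u|\ge |u|-\tfrac12(|z|-|\bar z|)$ rests on an unproven counting claim (``how many letters of a $\beta_j$ can contribute to a matched cyclic pair'') and on part (3); and your argument for (3) needs the occurrences in $u$ and $u^{-1}$ to land symmetrically with respect to the maximal decomposition $\bar z=\bar u\bar w\bar u^{-1}$, i.e.\ it needs control of how the $\beta_j$'s sit inside $u$ versus $u^{-1}$, which the ladder decomposition of a diagram between the reduced representative and $\bar z$ does not provide. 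In short, the proposal substitutes the diagram of Proposition~\ref{prop:eq-conj} for the reduction-process analysis that the lemma actually requires, and the key quantitative conclusion (1)(c) is not established.
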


\begin{proof}

Recall that the presentation (!!) for $G$ satisfies the $C'(\lambda_0)$
condition with $\lambda_0=1/1000$.

By a \emph{$\lambda_0$-reduction} on a word in the generators of $G$
we mean replacing a subword of that word, that is also a subword of
one of the defining relations $r$ of length $> (1-3\lambda_0)|r|$, by
the complementary portion of $r$. Recall that, by
Definition~\ref{defn:l-reduced}, a freely reduced word in $F(A\cup B)$
is \emph{$\lambda_0$-reduced} if it does not admit any $\lambda_0$-reductions.

Note that by definition of being exceptional, the word $\bar z\in
F(A)$ is $\lambda_0$-reduced. Observe now that if $\tilde z$ is any other freely reduced and $\lambda_0$-reduced representative of $z$ then $\tilde z=\bar z$ as
words. Indeed, if $\tilde z\ne \bar z$ then the
equality diagram for $\tilde z=_G \bar z$ has the form provided by
Proposition~\ref{prop:eq-conj} and, also by Proposition~\ref{prop:eq-conj}, the word $\bar z$ contains a subword of a
defining relation of length $\ge 1/100$ of that relation. This is
impossible since $\bar z$ is $\lambda_0$-reduced.

Thus any maximal sequence of free and $\lambda_0$-reductions applied to $z$ must
terminate with the word $\bar z$, and $\bar z$ is the unique freely
reduced and $\lambda_0$-reduced word representing the same element as $z$ in $G$.

Let $\bar z=_G \bar u\bar w{\bar u}^{-1}$ be the representation of the
element $z\in G$ provided by Lemma~\ref{lem:sc-conj}. Recall that by Lemma~\ref{lem:sc-conj}, the element of $G$ represented by the word $\bar
w$ is the shortest (in $G$) representative of the conjugacy class of
$\bar z$, the word  $\bar u\bar w{\bar u}^{-1}$ is freely reduced and
Dehn reduced (in particular, $\lambda_0$-reduced). Since both
$\bar z$ and $\bar u\bar w{\bar u}^{-1}$ are freely reduced and $\lambda_0$-reduced, it follows
that $\bar z=\bar u\bar w{\bar u}^{-1}$ as
words. Moreover, we can obtain $\bar z$ from $z$ by applying
$\lambda_0$-reductions and free cancellations. Note that since the element represented by the geodesic word $\bar w$ is shortest in the $G$-conjugacy class of $z$, part (2) of Lemma~\ref{ztobarz} is now established.

Every $\lambda_0$-reduction replaces a subword $Q$ by a word of length
$\le |Q|/100$. A free reduction replaces a subword of positive even
length by an empty word. Recall that $\bar z$ is the unique freely
reduced and $\lambda_0$-reduced representative of $z$. Moreover, since
$z\ne \bar z$, at least one $\lambda_0$-reduction is necessary to get
to $\bar z$ from $z$. Hence, by an inductive argument, we can show that $z$ and $\bar z$ can be represented as
$z=\alpha_0\beta_1\alpha_1\ldots \beta_t\alpha_t$, $\bar z=\alpha_0\bar\beta_1\alpha_1\ldots \bar\beta_t\alpha_t$, where
$|\bar\beta_i|\le \frac{1}{100}|\beta_i|$, where
$\beta_i=_G\bar\beta_i$ and where the process of $\lambda_0$-reducing $z$ to $\bar z$ consists in $\lambda_0$-reducing each of $\beta_i$ to $\bar \beta_i$ separately. This implies part (3) of Lemma~\ref{ztobarz}.

The fact that $|\beta_i|\ge 1000N$ requires a more involved argument
which we sketch below, omitting some of the technicalities.

The process of $\lambda_0$-reducing $z$ to $\bar z$ can be conducted
in stages. At the first stage we find a maximal collection of
non-overlapping subwords $y_q$, $q=1,2,...,M$, of $z$ such that each
of them is also a subword of some defining relator $b_j=v_j(\overline
a)$ (or a cyclic permutation of this relator or its inverse) of length
comprising $\ge (1-3\lambda_0)$ fraction of that relator. Since $z$ is
a word in $F(A)$, each $y_q$ is also a word in $F(A)$.  We then replace
each of these subwords $y_q$ by the missing portions of the
corresponding defining relators, which introduces $M$ letters from
$\{b_1,\dots, b_m\}^{\pm 1}$ into the word, and denote the result by
$z_0'$. We then freely reduce $z_0'$ to get a word $z_1$ which is the
result of the first stage. Note that no $b_j^{\pm 1}$ from $z_0'$ get
cancelled in freely reducing $z_0'$ to $z_1$ since otherwise one could
show that the word $z$ was not freely reduced. Also, we have $t\le M$
and each of $y_q$ is a subword of some $\beta_i$ from the conclusion
of the lemma. Recall that the $\lambda_0$-reduced form $\bar z$ of $z$
does not involve any $b_j^\pm 1$.

In the word $z_1$ obtained after the first stage the only applicable
$\lambda_0$-reductions are those that involve large portions of the
defining relators $a_j=u_j(\overline b)$ (with some technical
exceptions which we suppress for the purposes of this sketch).

At the second stage, we apply a similar process to $z_1$ by
finding a maximal collection of disjoint subwords $x_s$ of $z_1$ such
that each $x_s$ is also a subword of some relation $a_j=b_j(\overline
b)$ comprising $\ge (1-3\lambda_0)$ fraction of that relation. Thus
each $x_s$ is, except for a single letter, a word in
$\{b_1,\dots,b_m\}^\pm 1$ of length $\ge 0.99N$. When tracked back to
$z$, this causes, for each $x_s$, a conglomeration of $\ge 0.99N$ of
the original $y_q$s into a single $\beta_j$ of length $\ge
(1-3\lambda_0)N\cdot 0.99N\ge 1000N$ for sufficiently large $N$. Some
of the $b_j$ introduced in $z_1$ at stage one may not be contained in
any of the $x_s$ at stage two.

However, eventually all of the  $b_j$
that were introduced in $z_1$ at stage one have to disappear in the
$\lambda_0$-reduction process (since the word $z$ involves only
$\{a_1,\dots, a_k\}^{\pm 1}$) and one can show that the condition $|\beta_i|\ge 1000N$ holds for every $i=1,\dots, t$.

\end{proof}

\section{Proof of the main theorem}

This section is dedicated to the proof of the main result of the paper,
Theorem~\ref{main} from the introduction. While the strategy of the proof is similar to the strategy of the proof of
Theorem~\ref{specialcase}, the argument turns out to be significantly more involved.

\smallskip We will prove the following theorem which is the precise formulation of Theorem~\ref{main}
from the Introduction.

\begin{thm}\label{thm:main} Let $k\ge 2$,  $m\ge 2$ and let
  $A=\{a_1,\dots, a_k\}$ and $B=\{b_1,\dots, b_m\}$ be free bases of
  $F(A)$ and $F(B)$. Then there exist exponentially generic subsets $V\subseteq
  \mathcal C_{m,A}$ and $U\subseteq \mathcal C_{k,B}$ with the following properties.

Let $(u_1(\bar b),\dots,u_k(\bar b))\in U$ and $(v_1(\bar a), \dots, v_m(\bar a))\in V$
be arbitrary such that $|u_i|=|v_j|$ for $1\le i\le k$ and $1\le j\le m$.
Let $G$ be given by the presentation
\[
G=\langle a_1,\ldots,a_k,b_1,\ldots b_m\,|\,a_i=u_i(\bar b),
b_j=v_j(\bar a)\hbox{ for }1\le i\le k, 1\le j\le m\rangle.\tag{!!}
\]

Then the $(k+1)$-tuple
 $(a_1,\ldots, a_k,1)$ is not Nielsen equivalent in $G$ to a $(k+1)$-tuple of form $(b_1,b_2,*,\ldots,*)$.
\end{thm}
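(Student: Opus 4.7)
The plan is to extend the Stallings-fold argument of Theorem~\ref{specialcase} by combining Lemma~\ref{not-so-trivial} with the cancellation machinery of Section 4. Assuming the theorem fails, I will extract a product decomposition of a word representing $b_1$ and invoke Lemma~\ref{lem:casea} to obtain a contradiction.

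First, arguing exactly as in Theorem~\ref{specialcase}, a hypothetical Nielsen equivalence between $(a_1,\ldots,a_k,1)$ and $(b_1,b_2,g_1,\ldots,g_{k-1})$ in $G$ yields a $(k+1)$-tuple $T'=(w_1,w_2,t_3,\ldots,t_{k+1})$ of words in $F(A)$, Nielsen equivalent in $F(A)$ to $(a_1,\ldots,a_k,1)$, with $w_1=_G b_1$ and $w_2=_G b_2$; after a simultaneous conjugation I may assume $w_1$ is cyclically reduced. Corollary~\ref{cor:conj-sc} and Lemma~\ref{lem:sc-conj} then force $w_1$ to contain a subword of length $\geq (1-3\lambda_1)|r|$ of some defining relator $r\in R_\ast$, so Condition~\ref{cond:gen}(4) guarantees that $w_1$ contains every reduced word of length $1000$ in $F(A)$ as a subword; analogous statements hold for $w_2$.

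Next, I would apply Lemma~\ref{lem:stallings} to obtain a fold sequence $S_{T'}=\Gamma_0\to\cdots\to\Gamma_n=R_A$ of core $A$-graphs in which $w_1$ and $w_2$ are both readable as closed paths at the basepoint. If some $\Gamma_i$ has rank $k$ then all subsequent graphs also have rank $k$ and $\Gamma_{n-1}$ is a critical graph, whereupon Lemma~\ref{trivial} forces $w_1$ to miss a reduced word of length at most $4$, contradicting the above. Hence all $\Gamma_i$ for $i<n$ have rank $k+1$ and the final fold identifies a pair of loop edges at the single vertex of $\Gamma_{n-1}$. Let $\Gamma_{j_0}$ be the first graph in the sequence that contains $R_A$ as a subgraph: for $i<j_0$, Lemma~\ref{not-so-trivial} produces a reduced word not readable in $\Gamma_i$, and bounding its length in terms of the attached segment or lollipop together with genericity either gives a contradiction at once or forces the attached structure in $\Gamma_{j_0}$ to be substantial. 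At $\Gamma_{j_0}$ the graph is $R_A$ together with an attached subdivided segment or lollipop whose associated loop word $h\in F(A)$ produces decompositions
\[
w_1=u_0h^{\varepsilon_1}u_1\cdots h^{\varepsilon_s}u_s,\qquad w_2=u_0'h^{\varepsilon_1'}u_1'\cdots h^{\varepsilon_t'}u_t'
\]
with $u_i,u_j'\in F(A)$. Using Lemma~\ref{productlength2} I would then normalize $h$ to $\bar h$ satisfying the hypotheses of Lemmas~\ref{cancel2}, \ref{productlength2} and \ref{cancel}; genericity ensures $\bar h$ is non-exceptional and the rank-$k$ sub-structure of $\Gamma_{j_0}$ forces the $u_i$ and $u_j'$ to be critical.

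Finally, I would apply Lemma~\ref{lem:casea} to $p=u_0\bar h^{\varepsilon_1}\cdots u_s$ to conclude that $p$ is conjugate to $b_1$ in $F(A\cup B)$. If $\bar h\in F(A)$ then $p\in F(A)$, but no element of $F(A)$ is conjugate to $b_1$ in $F(A\cup B)$ since any such conjugate contains a $b_1$-letter in its reduced form. If $\bar h$ involves letters from $B$, then the reduced form of $p$ in $F(A\cup B)$ contains multiple disjoint $B$-clusters that cannot collapse to the single $b_1$-cluster of $g_0 b_1 g_0^{-1}$; smallness of $s$, which would evade this, is ruled out by applying Lemma~\ref{lem:casea} in parallel to the decomposition of $w_2$ with the same $\bar h$. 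The hardest step will be the third: rigorously extracting the non-exceptional $\bar h$ and the critical $u_i$ from the fold-sequence structure, bounding the length of Lemma~\ref{not-so-trivial}'s missing word in terms of the attached structure, and ensuring that the simultaneous readability of $w_1$ and $w_2$ yields compatible decompositions with the same $\bar h$ satisfying the Section 4 hypotheses.
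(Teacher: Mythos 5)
Your reduction to a fold sequence and your Case-A endgame do follow the paper's general strategy, but there is a genuine gap at precisely the step you flag as hardest: the claim that ``genericity ensures $\bar h$ is non-exceptional.'' Genericity constrains only the defining words $u_i,v_j$; it says nothing about the pivotal word $h$ (the label of the attached segment or lollipop), which is produced by the unknown Nielsen equivalence and can be completely arbitrary. Its $G$-geodesic representative may very well be an \emph{exceptional} word of $F(A)$ (containing no subword of length $N/100$ of any relator) --- for instance it could simply be a short word of $F(A)$ --- and in that case none of the Section~4 machinery applies: Lemmas~\ref{cancel2}, \ref{productlength2}, \ref{cancel} and \ref{lem:casea} all require $h$ non-exceptional as a hypothesis, and Lemma~\ref{productlength2} cannot be used to ``normalize'' an exceptional word into a non-exceptional one. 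The exceptional case is exactly where the bulk of the paper's proof lives: writing $z$ for the pivotal word and $\bar z$ for its geodesic representative, the paper splits into $\bar z$ non-exceptional (your case), $\bar z$ exceptional with $z\neq\bar z$ (handled by replacing the segment/lollipop labelled $z$ by one labelled $\bar z$, with Lemma~\ref{ztobarz} controlling the replacement, and performing a corresponding surgery on $S_T$ that strictly shortens the first two petals), and $z=\bar z$ (handled by a separate genericity argument using Lemma~\ref{not-so-trivial} and the fact that words readable in $\Psi$ are critical). Your proposal has no counterpart to either exceptional subcase.

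Relatedly, the surgery argument in the exceptional subcase only produces a contradiction because the tuple $(w_1,\dots,w_{k+1})$ is chosen at the outset with $|w_1|_A+|w_2|_A$ \emph{minimal} among all tuples Nielsen equivalent in $F(A)$ to $(a_1,\dots,a_k,1)$ with $w_1=_G b_1$, $w_2=_G b_2$; your proposal never makes such a minimality choice, so even if you confronted the exceptional case you would have nothing to contradict. Two smaller points: in the non-exceptional case the paper closes the argument not by counting ``$B$-clusters'' but by applying Lemma~\ref{lem:casea} (and the survival of $B^{\pm1}$-letters from its proof of Case~2) to \emph{both} $p_1$ and $p_2$, forcing $\bar z$ to contain $b_1^{\pm1}$ as its only $B$-letter and simultaneously $b_2^{\pm1}$ as its only $B$-letter --- your parallel use of $w_2$ is aimed at a different issue and would need to be reworked into this two-sided argument; and your use of Lemma~\ref{not-so-trivial} on the graphs before $\Gamma_{j_0}$ (``bounding the length of the missing word in terms of the attached structure'') is not a step that can be carried out as stated, since that lemma gives no length control --- in the paper it is used only when $z=\bar z$ and $|z|$ is bounded, where finitely many core graphs occur.
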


\medskip The proof of Theorem~\ref{thm:main} is by
contradiction. Thus we assume that the $(k+1)$-tuple $(a_1,\ldots,
a_k,1)$ is Nielsen equivalent in $G$ to a tuple of type
$(b_1,b_2,*,\ldots,*)$. It follows that in $F(A)$ the tuple
$(a_1,\ldots,a_k,1)$ is Nielsen equivalent to a tuple of type
$(w_1(\bar a),\ldots ,w_k(\bar a),w_{k+1}(\bar a))$ such that $b_i=w_i(\bar a)$ in $G$ for $i=1,2$.

\medskip Let $T=(w_1(\bar a),\ldots ,w_k(\bar a),w_{k+1}(\bar a))$ be
a tuple with the above properties such that
$|w_1|_A+|w_2|_A$ is minimal among all such tuples.

\medskip Let $S_T$ be the $A$-graph as in Section~\ref{stallings} and choose a sequence $$S_T=\Gamma_0,\Gamma_1,\ldots
,\Gamma_l=R_A$$ of $A$-graphs such that $\Gamma_{i}$ is obtained from
$\Gamma_{i-1}$ by a Stallings fold for $1\le i\le l$. Choose $q$
maximal such that $\Gamma_q$ does not contain a subgraph isomorphic to $R_A$ as an $A$-graph. Put $\Delta:=
\Gamma_q$. Then $\Delta$ contains a subgraph $\Psi$ of rank $k$ with
at most $k+2$ edges such that $\Psi$ is a core graph 
and that $\Psi$, via a single fold, folds onto a
subgraph of $\Gamma_{q+1}$ that is isomorphic to either $R_A$ or to
$R_A$ with a spike-edge, see the proof of Lemma~\ref{trivial}. Note that any word readable in $\Psi$ is critical.

Note that $Core(\Delta)$ cannot coincide with $\Psi$ by the same
argument used in the proof of Theorem~\ref{specialcase}. 
Indeed, if $Core(\Delta)=\Psi$, then by Lemma~\ref{trivial} there
exists a reduced word of length $4$ that cannot be read in
$Core(\Delta)$. On the other hand, the cyclically reduced form of
$w_1(\overline{a})$ can be read in
$Core(\Delta)=Core(\Gamma_q)$. Since $w_1(\overline a)$ is conjugate
to $b_1$ in $G$, it follows that the cyclically reduced form of
$w_1(\overline a)$
must contain a large portion of some defining relation $v_j(\overline
a)b_j^{-1}$. Hence, by the genericity assumptions on $v_j$, the cyclically
reduced form of $w_1(\overline a)$ contains every freely reduced word
of length $4$ as a subword, yielding a contradiction. Thus indeed
$Core(\Delta)\ne \Psi$.

\medskip In the following a \emph{lollipop} is a loop (candy) with an segment (the stick) attached. We will allow the stick to be degenerate, i.e. the lollipop to be a loop.

\begin{lem}\label{coredescription} The $A$-graph $\Delta$ has the following properties:

\begin{enumerate}
\item The subgraph $\bar \Delta=Core(\Delta)$ carries the
  fundamental group of $\Delta$ and one of the following holds (see Figure~\ref{fig:delta}):
  \begin{enumerate}
  \item[(a)] $\bar \Delta$ can be obtained from $\Psi$ by attaching a
    (subdivided) segment $\gamma$ along both endpoints to distinct vertices of $\Psi$.
  \item[(b)] $\bar \Delta$ can be obtained from $\Psi$ by attaching a
    (subdivided) lollipop along its unique valence one vertex if the stick is non-degenerate and an arbitrary vertex otherwise.
  \end{enumerate}
\item Either the base-vertex of $\Delta$ lies in $\bar \Delta$ in
  which case $\Delta=\bar \Delta$, or $\Delta$ is obtained from $\bar
  \Delta$ by attaching a segment to $\bar \Delta$ along one of its
  endpoints.  The other endpoint of the segment is then the base
  vertex $x_0$ of $\Delta$.
\end{enumerate}
\end{lem}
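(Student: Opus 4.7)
The plan is to handle the two parts of the lemma separately. Both depend first on pinning down $\mathrm{rank}(\Delta)=k+1$, after which part (1) reduces to an Euler-characteristic computation and part (2) to a short observation about freely reduced walks.

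I would first show $\mathrm{rank}(\Delta)=k+1$. Folds never increase rank, so $\mathrm{rank}(\Delta)\in\{k,k+1\}$, bounded above by $\mathrm{rank}(S_T)\le k+1$ and below by $\mathrm{rank}(R_A)=k$. Every vertex of $\Psi$ has degree $\ge 2$ inside $\Psi$ (since $\Psi$ is a core graph), hence in $\Delta$, so the iterative leaf-pruning that produces $\bar\Delta$ removes no vertex or edge of $\Psi$, giving $\Psi\subseteq\bar\Delta$. If $\mathrm{rank}(\Delta)=k$ then $\bar\Delta$ would be a core graph of rank $k$ containing the rank-$k$ core subgraph $\Psi$, forcing $\bar\Delta=\Psi$ and contradicting the remark preceding the lemma. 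Hence $\mathrm{rank}(\bar\Delta)=\mathrm{rank}(\Delta)=k+1$.

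For part (1), let $G^+\subseteq\bar\Delta$ be the subgraph spanned by the edges not in $\Psi$ together with their endpoints, and decompose $G^+$ into $c$ connected components, writing $r_i$ and $n_i\ge 1$ for the rank and number of $\Psi$-attachment vertices of the $i$th component. A direct vertex/edge count gives
\[\mathrm{rank}(\bar\Delta)-\mathrm{rank}(\Psi)\;=\;\sum_{i=1}^c(r_i+n_i)\,-\,c,\]
which must equal $1$. Since each summand is $\ge 1$, exactly one distinguished component has $r_i+n_i=2$ and every other component would have $(r_i,n_i)=(0,1)$; but a component with $(r,n)=(0,1)$ is a tree with at least one edge attached at a single $\Psi$-vertex, which would produce a new leaf in $\bar\Delta$ and violate the core condition. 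Hence $c=1$ and the unique component is either a subdivided segment meeting $\Psi$ in two distinct vertices ($(r,n)=(0,2)$, case (a)), or a rank-one graph meeting $\Psi$ at a single vertex whose only permissible leaf is that vertex, i.e.\ a subdivided lollipop attached at the free end of its stick when the stick is non-degenerate and at an arbitrary vertex of the cycle when the stick is degenerate ($(r,n)=(1,1)$, case (b)).

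For part (2), I would first freely reduce each $w_i$, which does not alter the image in $G$ nor enlarge $|w_1|+|w_2|$, so that all loops of $S_T$ carry freely reduced labels. The key observation is that every leaf of $\Delta$ must equal $x_0$: if $v\ne x_0$ had valence one in $\Delta$ with unique incident edge $e$, then the preimage of $v$ under the folding map $S_T\to\Delta$ would consist entirely of internal vertices of loops, so the image in $\Delta$ of any loop $\mathcal L^{(j)}$ whose preimage meets $v$ would be a closed walk entering $v$ via $e$ and immediately exiting via $e$, producing a subword of the form $aa^{-1}$ in the label $w_j$ and contradicting its freely reducedness. From this the description in (2) is immediate: if $x_0\in\bar\Delta$ then $\Delta$ has no leaves and therefore no attached trees, so $\Delta=\bar\Delta$; otherwise any attached forest must have exactly one leaf (namely $x_0$), which forces it to be a single subdivided segment from a vertex of $\bar\Delta$ to $x_0$. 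The step requiring the most care is the Euler-characteristic analysis in part (1), in particular ruling out the extra $(r,n)=(0,1)$ components, after which both parts of the lemma follow essentially formally.
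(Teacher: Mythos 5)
Your proof is correct and takes essentially the same route as the paper's (much terser) argument: part (1) is exactly the ``core graph of rank $k+1$ containing a core subgraph of rank $k$'' count, which you simply make explicit via the Euler-characteristic bookkeeping and the elimination of extra tree components, and part (2) rests on the same observation that freely reduced petal labels force every vertex of $\Delta$ other than $x_0$ to have degree at least $2$ (you pull a putative leaf back to $S_T$, while the paper pushes the degree condition forward through the folds). No gaps.
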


	\begin{figure}[htb]
		\input{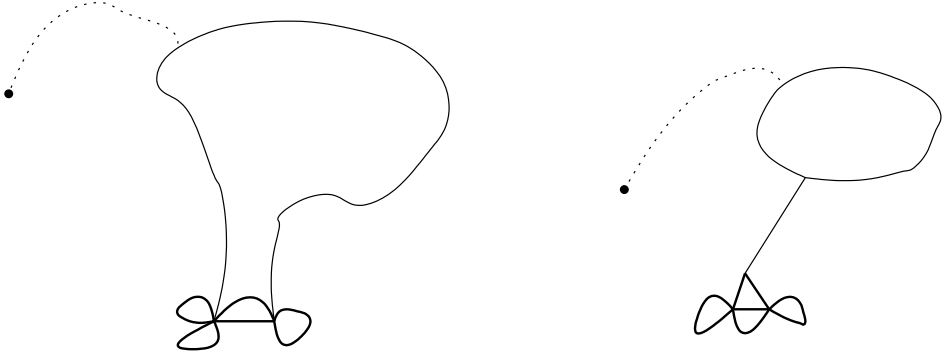}
		\caption{The two options for the graph $\Delta$}
		\label{fig:delta}
	\end{figure}

\begin{proof} Part (1) follows immediately from the fact that $\bar\Delta$ is a core graph of rank $k+1$ and that $\Psi$ is a core subgraph of rank $n$. Part (2) follows as any vertex of $S_T=\Gamma_0$, except possibly the base vertex, is the initial vertex of two edges with distinct labels. This property is preserved by folds and therefore holds for $\Delta$. The claim is now immediate.
\end{proof}

Note that in Figure~\ref{fig:delta} the graph $\Psi$ consists of the fat lines. A trivial but important fact is that any of the words $w_i(\bar a)$  can be read by a path in $\Delta$ based at the base vertex since the map $p$ from $S_T$ to $R_A$ factors through $\Delta$. This path is clearly reduced as the words $w_i(\bar a)$ are reduced.

\begin{conv}[Pivotal path and pivotal word]
\medskip Let the word $z\in F(A)$, called the \emph{pivotal word}, be defined as follows: In the case
(1)(a) of Lemma~\ref{coredescription} put $z$ to be the label of the segment attached to $\Psi$. In the case (1)(b) of
Lemma~\ref{coredescription} put $z$ to be the label of the closed
reduced edge-path in $\bar \Delta$ based at the attaching point of the
lollipop that walks once around the lollipop, i.e. it is a word of
type $uwu^{-1}$ where $u$ is the label of the stick of the lollipop.

Thus in both cases $z$ is the label of a path $Z$ in $\Delta$ which we
call the \emph{pivotal path}.
\end{conv}

\medskip Let now $\bar z$ be a geodesic word in $G$ over $(A\cup
B)^{\pm 1}$ such that in $G$ we have the
identity $z=_G \bar z$. We will distinguish two cases, namely that $\bar z$ is exceptional and that $\bar z$ is non-exceptional.

\medskip\noindent {\bf Case A:} The word $\bar z\in F(A\cup B)$ is non-exceptional.

\medskip Possibly after replacing $\bar z$ with another non-exceptional word representing the same element we can assume that for some unique critical word $w$ the conclusions of Lemma Lemma~\ref{cancel2}, Lemma~\ref{productlength2} and Lemma~\ref{cancel} hold for $\bar z$ and $h$.

Let now $x_1$ be a base point in $\Psi$ that is incident to the segment or lollipop with label $z$. We study elements
represented by closed reduced paths in $\Delta$ based at $x_1$.  It
follows from the construction that for $i=1,2$ there exist closed
reduced paths $\alpha_1,\alpha_2$ in $\Delta$ based at $x_1$ with
labels $p_1, p_2$ such that for
$i=1,2$ the word $p_i$ represents an element of $G$ conjugate to
$b_i$ in $G$. The description of $\Delta$ in Lemma~\ref{coredescription} and
the definition of $z$ imply that $p_1$ and $p_2$  are words that are freely equivalent to words of type
$$w_0z^{\varepsilon_1}w_1\cdot\ldots\cdot
w_{n-1}z^{\varepsilon_n}w_n$$ where the $w_i$ are reduced words read
by reduced paths in $\Psi$ and $\varepsilon_i\in\{-1,1\}$ for all
$i$. In particular the words $w_i$ are critical. As in $G$ we have $z=\bar z$, it follows that there are also words $\bar p_1$ and $\bar p_2$ of type $$w_0{\bar z}^{\varepsilon_1}w_1\cdot\ldots\cdot w_{n-1}{\bar z}^{\varepsilon_n}w_n$$ with the $w_i$ as before that represent elements conjugate to $b_1$ and $b_2$. Note that the obtained words might no longer be freely reduced even if the initial ones were.

\medskip Suppose now that $\bar p_1=w_0{\bar
  z}^{\varepsilon_1}w_1\cdot\ldots\cdot w_{n-1}{\bar
  z}^{\varepsilon_n}w_n$. It now follows from Lemma~\ref{lem:casea}
that $\bar p_1$  cyclically reduces to $b_1$ in $F(A\cup B)$.  The proof of Case~2 of
Lemma~\ref{lem:casea} implies that of each factor $\bar z^{\pm 1}$ at
least one letter $b_i^{\pm 1}$ does not cancel. This implies that
$w_1\bar z^{\pm 1}\bar w_1$ must cyclically  reduce to $b_1$ where
$w_1$ and $\bar w_1$ are words in $F(A)$. Therefore
$\bar z$ contains a single letter of type $b_i^{\pm 1}$, namely
$b_1^{\pm 1}$. The same argument applied to $p_2$ implies that $\bar
z$ contains $b_2^{\pm 1}$ which yields a contradiction.   This
concludes the proof of Theorem~\ref{thm:main} for Case A.

\medskip\noindent {\bf Case B:} The word $\bar z$ is exceptional (so
that, in particular, $\bar z\in F(A)$). We
distinguish the cases that $z=\bar z$ and that $z\neq\bar z$ (as words).

\smallskip \noindent{\bf Subcase B1: } Suppose that  $z\neq\bar z$.
The aim in this case is to get a contradiction to the minimality
assumption on the sum of the lengths of $w_1$ and $w_2$. The
underlying idea is simple: we just replace in $\Delta$ the segment
(respectively lollipop) with label $z$ by an segment (respectively
lollipop) with label $\bar z$ and then carry out the corresponding
replacement in the loops of $S_T$ upstairs. Since in this case the
word $\bar z$ is considerably shorter than $z$, this yields a
contradiction with the minimality assumption on the pair $(w_1,w_2)$. However taking the basepoint into account makes the proof more technical.

\medskip Since Lemma~\ref{ztobarz} applies to the transition from $z$
to $\bar z$, we choose the words $\beta_i$ and $\alpha_i$ as in the conclusion of Lemma~\ref{ztobarz}.
 Part (1c) of Lemma~\ref{ztobarz} implies that $\bar z$ is significantly shorter than $z$, namely by at least $\frac{99}{100}\sum|\beta_i|\ge 990\cdot k\cdot |v_i|$. We denote $\sum|\beta_i|$ by $L$.

\medskip Any vertex $x$ of the pivotal path $Z$, except the initial and terminal
vertex of $Z$, corresponds to a subword of length
two of $z$ which is read along the two-edge subpath $e_1e_2$ of $Z$ when reading
$z$, where $x=t(e_1)=o(e_2)$. In the case of the lollipop and the
vertex lying on the stick there are two such two-letter subwords of $z$
associated with the vertex $x$. 

We say that a vertex on the pivotal path $Z$ is \emph{inessential}
if none of these at most two subwords is a subword of one the
$\beta_i$. Note that, by definition, the initial and the terminal
vertices of $Z$ are inessential.

\medskip We perform some changes to $S_T$ and $\Delta$; these changes do not alter the following facts:
\begin{enumerate}
\item $S_T$ folds onto $\Delta$ and  $\Delta$ folds onto $R_A$. Thus $S_T$ folds onto $R_A$.
\item The first two loops of $S_T$ are labeled by words in the $a_i^{\pm 1}$ that represent $b_1$ and~$b_2$.
\end{enumerate}

The conclusion then follows from the fact that after the final modification the length of the first two loops of $S_T$ has decreased.

	\begin{figure}[htb]
		\input{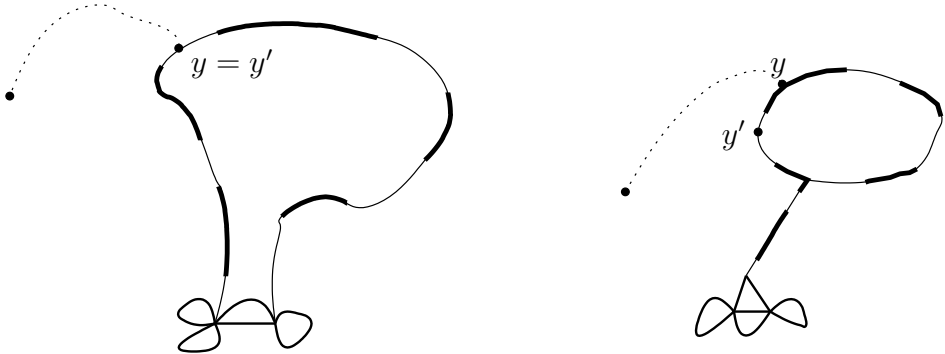}
		\caption{Fat parts correspond to paths read by the $\beta_i$}
		\label{fig:delta2}
	\end{figure}

\smallskip

Let $y$ be the vertex of $\bar\Delta=Core(\Delta)$ such that $d(v_0,y)=d(v_0,\bar\Delta)$. Thus $y=v_0$ if $v_0\in\bar \Delta$ and $y$ is the attaching point of the segment joining $\bar\Delta$ and $v_0$ otherwise.  We now choose a new vertex $y'\in\Delta$ so that the following hold (see Figure~\ref{fig:delta2}):
\begin{enumerate}
\item The vertex $y'$ is inessential.
\item We have $d(y,y')\le 2L$.
\item If $y$ is in distance less than $L$ from the stick of the lollipop then $y'$ is on the stick of the lollipop (if we are in the lollipop situation).
\end{enumerate}

The existence of such a vertex $y'$ follows immediately from the
definition of $L$. From now on we only provide figures for the
lollipop situation as it is the more subtle one.

\smallskip Let $[y',y]$ be an arbitrary geodesic  in $\bar\Delta$. We now
unfold along $[y',y]$ (starting at $y$). In the end $v_0$ is attached to
$\bar\Delta$ by a segment of length $d(v_0,y)+d(y,y')$  which intersects $\bar\Delta$ only in $y'$.  We denote the just constructed unfolded $\Delta$ by $\Delta_+$. Note that we have not changed $\Psi$ or $Core(\Delta)$ since  $Core(\Delta_+)=Core(\Delta)=\bar\Delta$.

	\begin{figure}[htb]
		\input{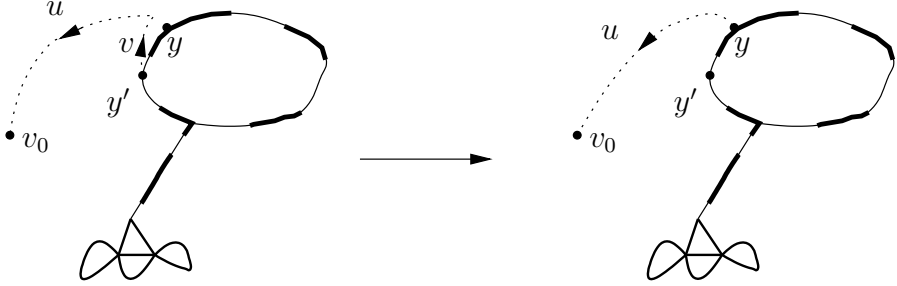}
		\caption{Unfolding $\Delta$ to $\Delta_+$}
	\end{figure}

\smallskip Now $S_T$ does not necessarily map onto $\Delta_+$  but it does if we
introduce at the beginning and at the end of each petal of $S_T$ the trivial
word that corresponds to the unfolding when going from $\Delta$ to $\Delta_+$. More precisely:

\smallskip If $u$ is the word corresponding to $[y,v_0]$ in $\Delta$ and $v$
the word corresponding to $[y',y]$ in  $\Delta$ then the $i$-th
petal in $S_T$ has a label of type $u^{-1}d_iu$ for some word
$d_i$. This happens because the $i$-th petal in $S_T$ gets
mapped onto a path in $\Delta$ that starts and ends at $v_0$, after it starts
(before it ends) at $v_0$ it has no other option than going to $y$ first
(or must come from $y$).

	\begin{figure}[htb]
		\input{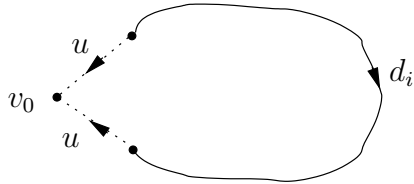}
		\caption{A loop of $S_T$}
	\end{figure}

 We replace the $i$-th petal of $S_T$ with a petal with
label $u^{-1}v^{-1}vd_iv^{-1}vu$. These new petals represent the same
group elements as the old ones and now map to  $\Delta_+$ since the word $u^{-1}v^{-1}$ maps to
the edge path $[v_0,y']$ in $\Delta_+$ and the word $u^{-1}v^{-1}v$ onto
$[v_0,y']\cup [y',y]$ in $\Delta_+$. We further modify the newly constructed graph by
performing the free reduction on the subword $vd_iv^{-1}$ that
corresponds to backtracking on the segment, respectively lollipop. Thus we
are left with petal labels of type $u^{-1}v^{-1}p_i\bar d_iq_i^{-1}vu$ where $p_i$ is the prefix of $v$ and $q_i^{-1}$ is the suffix of $v^{-1}$ that have not been cancelled in the free reduction of $vd_iv^{-1}$. We call the new graph $S_T^+$.

	\begin{figure}[htb]
		\input{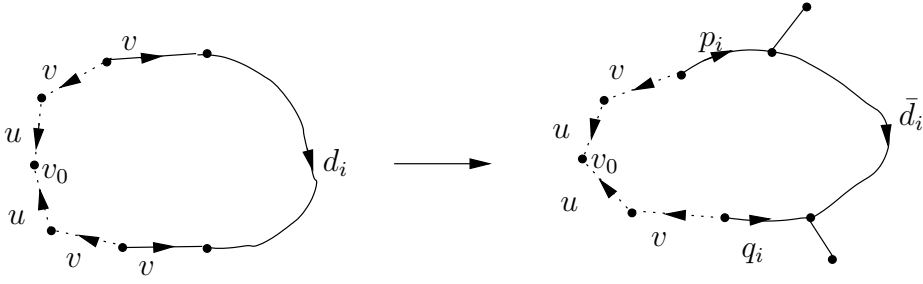}
		\caption{Constructing $S_T^++$ (only one loop drawn)}
	\end{figure}

Note that for $i=1,2$ the $i$-th loop of $S_T^+$ still represents the element
$b_i$ in $G$. By construction, since $d(y,y')=|v|\le 2L$, we have increased the length of each petal by at most $8L$.

\smallskip Now the subword $p_i\bar d_iq_i^{-1}$ of the $i$-petal label
in  $S_T^+$ corresponds to a closed
path in $\bar\Delta$ based at $y'$ and is therefore of type
\[
\theta_1\gamma_0uw^{r_1}u^{-1}\gamma_1\cdots
uw^{r_l}u^{-1}\gamma_l\theta_2^{-1} \tag{$\spadesuit$}
\] 
where:

\begin{enumerate}
\item The words $\theta_1$ and $\theta_2$ correspond to reduced paths from $y'$ to $\Psi$. In the case of a lollipop these paths might go around the lollipop a number of times.
\item The words $\gamma_i$ are read in $\Psi$ and are non-trivial for $1\le i\le l-1$.
\item $r_i\in\mathbb Z-\{0\}$ and $r_i\in\{-1,1\}$
  if $z$ is the label of an segment rather than the label of a lollipop.
\end{enumerate}

We now show that $l>20$ for $i=1,2$. Indeed, after conjugating the word in
$(\spadesuit)$ by $\theta_2^{-1}$, we obtain the word
$$\theta_2^{-1}\theta_1\gamma_0uw^{r_1}u^{-1}\gamma_1\cdots
uw^{r_l}u^{-1}\gamma_l=uw^{r_0}u^{-1}\gamma_0uw^{r_1}u^{-1}\gamma_1\cdots
uw^{r_l}u^{-1}\gamma_l.$$ Replacing $z^{r_i}=uw^{r_i}u^{-1}$ by ${\bar
  z}^{r_i}=\bar u\bar w^{r_i}\bar u^{-1}$ and freely reducing the
result produces a word in $F(A)$ that still represents a conjugate of
$b_1$ (or $b_2$) and must therefore contain half a relation after free
reduction. Now this word consists of the remnants of the $\gamma_i$ and the $\bar u\bar w^{r_i}\bar u^{-1}$. Now the $\gamma_i$ are critical and can therefore not contain a significant subword of a defining relation. Moreover the $\bar u\bar w^{r_i}\bar u^{-1}$ do not contains more
than $\frac{1}{50}$-th  of a defining relator by the 
assumption on $\bar z=\bar u\bar w\bar u^{-1}$ and the fact that a subword of a defining relator cannot be periodic.
It follows that $l>20$.

\smallskip We now construct a new $A$-graph $\Delta_*$ from
$\Delta_+$. We first replace $\bar\Delta=Core(\Delta_+)$ by replacing
the segment, respectively lollipop, with label $z$ as follows. In the case
of an segment, we replace it with an segment with label $\bar z$ and the same
attaching points as before. In the case of a lollipop we replace the lollipop with a lollipop whose stick is labeled by $\bar u$ and whose loop based at the stick is labeled by $\bar w$. In both cases walking along the segment or around the lollipop reads $\bar z=\bar u\bar w\bar u^{-1}$. Let $\bar\Delta_*$ be the modified $\bar\Delta$.

	\begin{figure}[htb]
		\input{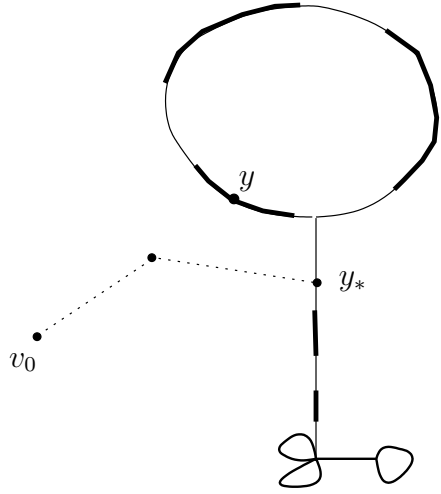}
		\caption{In  $\Delta_*$ the point $y'=y_*$ could move to stick}
	\end{figure}

 Since the vertex $y'$ was chosen to be inessential, it corresponds to
 a unique vertex of $\bar\Delta_*$ which we call $y_*$. We now define
 $\Delta_*$ to be the $A$-graph obtained from $\bar\Delta_*$ by attaching
 the segment $[v_0,y']$ (from $\Delta$) to $\bar\Delta_*$ by
 identifying $y'$ and $y_*$. Note that in the lollipop case if $y'$
 was on the stick of the original lollipop then by part (3) of Lemma~\ref{ztobarz}
 $y_*$ is on the stick of the new one but the converse is not
 necessarily true. The $A$-graph $\Delta_*$ still folds onto $\mathcal
 R_A$ since $\Delta_*$
 still contains $\Psi$ as a subgraph.

\smallskip Next we need to adjust $S_T^+$ so that it maps onto $\Delta_*$.
Recall that the labels of the petals in $S_T^+$ are of type $u^{-1}v^{-1}p_i\bar d_iq_i^{-1}vu$ where $p_i\bar d_iq_i^{-1}$ is of the form $$\theta_1\gamma_0uw^{r_1}u^{-1}\gamma_1\cdots uw^{r_l}u^{-1}\gamma_l\theta_2^{-1}$$ with $\theta_1$ and $\theta_2$ words read by paths that start at $y'$ and end in $\Psi$. We will not change the initial subword $u^{-1}v^{-1}$ or the terminal subword $vu$, nor will we change the subwords $\gamma_i$. We first replace the subwords $uw^{r_i}u^{-1}$ by $\bar u\bar w^{r_i}\bar u^{-1}$. This reduces the length of the word by at least $20\cdot \frac{99}{100}L$.

\smallskip We will now replace $\theta_1$ and $\theta_2$ by words
$\bar\theta_1$ and $\bar\theta_2$ that correspond  to paths in
$\Delta_*$ whose initial vertex is $y_*$, which have the same terminal
vertices  in $\Psi$ as $\theta_1$ and $\theta_2$ and which are at most
as long as $\theta_1$ and $\theta_2$, respectively. We give the construction for $\theta_1$, as the case for
$\theta_2$ is analogous.

 Since $\theta_1$ is freely reduced, it is a suffix of the word $uw^ru^{-1}$ for some $r\in\mathbb Z$ with $r\in\{-1,1\}$ in the segment case. We choose $r$ such that $|r|$ is minimal, w.l.o.g. we can assume that $r>0$. We distinguish a number of cases:

If $|r|=1$ then $\theta_1$ is a suffix of $z$ and the conclusion follows after replacing all subwords $\beta_i$ of $\theta_1$ by $\beta_i'$, our choice of $y'$ guarantees that that each $\beta_i$ either is a subword of $\theta_1$ or does not overlap with $\theta_1$. The resulting word $\bar\theta_1$ is clearly at most as long as $\theta_1$. This case deals in particular with the non-lollipop case.

\smallskip If $|r|\ge 2$ and $\theta_1=tw^ru^{-1}$, i.e. if $y'$ lies on the stick of
the lollipop, then $y_*$ also lies on the stick of the lollipop
corresponding to $\bar u\bar w\bar u^{-1}$ by Lemma~\ref{ztobarz} (3)
and we replace $\theta_1$ by $\bar t\bar w^{r}\bar u^{-1}$ where $\bar
t\bar w\bar u^{-1}$ is the word obtained from $twu^{-1}$ as in the
case $r=1$. It follows that the resulting word $\bar\theta_1$ is as most as long as $\theta_1$ as $|\bar t\bar w\bar u^{-1}|\le
|twu^{-1}|$ and as $|\bar w|\le |w|$ by part (2) of Lemma~\ref{ztobarz} (2).

\smallskip The remaining case is that $\theta_1=sw^{r-1}u^{-1}$ where $s$ is a suffix of $w$. Thus $\theta_1=\hat w^{r-1}su^{-1}$ where $\hat w$ is the word corresponding to the loop of the lollipop based at $y'$. Let now $\hat \theta$ be the word obtained from $su^{-1}$ as in the case $r=1$ and $\hat w'$ be the path corresponding the closed path at $y_*$ in the new lollipop that goes once around the lollipop. Note it follows from the fact that $y_*$ is at least in distance $L$ from the stick of the lollipop that $|\hat w'|\le |\hat w|$. We put $\bar \theta_1$ to be the word obtained from $(\hat w')^{r-1}\hat \theta$ by free reduction, clearly it is not longer than $\theta_1$.

Let now $S_T^*$ be the graph obtained from $S_T^+$ by performing the
above mentioned changes. 
The construction clearly preserves the fact that the map from $S_T^*$ to $\Delta_*$ is $\pi_1$-surjective. Thus $S_T^*$ folds onto $\Delta_*$ therefore onto $R_A$ as
$\Delta_*$ fords onto $R_A$.  In particular the loops of $S_T^*$ represent a tuples that is equivalent to $(a_1,\ldots ,a_k,1)$ such that the first two elements represent in $G$ the elements $b_1$ and $b_2$. Note however that the sum of the lengths of the first two petals has decreased as we first increased their lengths by at most $8L$ and then decreased their lengths by at least $20\cdot \frac{99}{100}L$. This  yields a contradiction with the minimality assumption.

\medskip 
\noindent{\bf Subcase B2: } We have $z=\bar z$ (so that
$\bar z \in F(A)$). 

Recall that $k$ is the cardinality of the free basis $A$ of $F(A)$.
If the
length of $z$ is less than $1000k^2$ then there are only $n=n(k)$
possibilities $\Gamma_1',\ldots ,\Gamma_{n(k)}'$ for the $A$-graph
$\bar\Delta=Core(\Delta)$. 
By Lemma~\ref{not-so-trivial}
for each $i=1,\dots, n(k)$ the set of words readable in $\Gamma_i'$ is
exponentially negligible in $F(A)$. Since the union of finitely many
exponentially negligible sets is again exponentially negligible, it
follows that there exists an exponentially generic subset of $F(A)$
(independent on the choice of $z$) such that for every word $w$ from that
subset no portion of $w$ that is more than $1/100$ of the length
of $w$ can be read in $Core(\Delta)$. We can then get a contradiction
similarly to the proof of  Theorem~\ref{specialcase}. Namely, by
construction there exists a freely reduced word $w\in F(A)$ (labelling one of the petals
in the graph $S_T$) whose cyclically reduced form $\tilde w$ can be
read in  $Core(\Delta)$ and such that the element represented by $w$
is conjugate to $b_1$ in $G$. Then by Corollary~\ref{cor:conj-sc} the
word $\tilde w\in F(A)$ contains a significant portion of a defining
relation from $R_\ast$, contradicting the genericity assumptions on
presentation (!!).

Suppose now that the length of $z$ is at least $1000k^2$. Then, again
arguing as in  the proof of  Theorem~\ref{specialcase}, we see that
there exists a cyclically reduced word $\tilde w\in F(A)$ that can be read in
$Core(\Delta)$ such that $\tilde w$ represents an element conjugate to
$b_1$ in $G$. Then, by Corollary~\ref{cor:conj-sc}, $\tilde w$
contains a subword $\widehat w$ such that $\widehat w$ is also a
subword of one of the defining relations in (!!) constituting more
than $1/100$ of that defining relation. Thus, on one hand, the word
$\widehat w$ is generic and, on the other hand, $\widehat w$ is equal,
as a word, to a product of the form
\[
w_0 z^{\pm 1} w_1 z^{\pm 1} \dots w_{t} z^{\pm 1} w_t
\]
where the words $w_i$ are readable in $\Psi$. Since words readable in
$\Psi$ are highly non-generic in $F(A)$ (because there exists a
reduced word of length 2 in $F(A)$ that cannot be read in $\Psi$), we
have $|w_i|\le |\widehat w|/100$ for $i=0,\dots, t$.
This implies that a definite positive proportion of $\widehat w$ consists of
the copies of $z^{\pm 1}$, which again contradicts genericity of
$\widehat w$.

This concludes the proof of Theorem~\ref{main}.


\end{document}